\documentclass[11pt]{amsart}

\usepackage [applemac] {inputenc} 
\usepackage[english]{babel} 
\usepackage{amsmath,amsthm,amssymb ,euscript} 
\usepackage{amsmath, amscd} 
\usepackage[all,cmtip]{xy}
\usepackage{rotating}        
\usepackage{graphicx}   
\usepackage[square, numbers]{natbib}     
\usepackage{macros}     
\usepackage{amsxtra}
\addtolength{\topmargin}{-1.5cm}
\addtolength{\textheight}{2.0cm}
\addtolength{\evensidemargin}{-2.0cm}
\addtolength{\oddsidemargin}{-2.0cm}
\addtolength{\textwidth}{3.4cm}
\theoremstyle{plain}

\begin{document}

\setlength{\baselineskip}{1.2\baselineskip}
\title{Cell decompositions of double Bott-Samelson varieties}
\author{Victor Mouquin}
\address{
Department of Mathematics   \\
The University of Hong Kong \\
Pokfulam Road               \\
Hong Kong}
\email{victor.mouquin@gmail.com}
\date{}
\begin{abstract} 
Let $G$ be a  connected complex semisimple Lie group. Webster and Yakimov have constructed partitions of the double flag variety $G/B \times G/B_-$, where $(B, B_-)$ is a pair of opposite Borel subgroups of $G$, generalizing the Deodhar decompositions of $G/B$. We show that these partitions can be better understood by constructing cell decompositions of a product of two Bott-Samelson varieties $Z_{\bfu, \bfv}$, where $\bfu$ and $\bfv$ are sequences of simple reflections. We construct coordinates on each cell of the decompositions and in the case of a positive subexpression, we  relate these coordinates to regular functions on a particular open subset of $Z_{\bfu, \bfv}$. \\
\indent
Our motivation for constructing cell decompositions of $Z_{\bfu, \bfv}$ was to study a certain natural Poisson structure on $Z_{\bfu, \bfv}$. 
\end{abstract}
\maketitle

\section{Introduction and statement of results}

\subsection{Introduction}
Let $G$ be a connected complex semisimple Lie group and $B$ a Borel subgroup of $G$. In \cite{D}, V. Deodhar defined a family of decompositions of the flag variety $G/B$ into pieces each of which is isomorphic to $\IC^k \times (\IC^*)^m$ for some integers $k, m  \geqslant 0$. The Deodhar decomposition has been used to study the Kazdan-Lusztig polynomials associated to $G$ and total positivity in $G/B$ (see \cite{D, MR}). Let $B_-$ be a Borel subgroup of $G$ opposite to $B$ and consider the 
{\it double flag variety}
$$
(G \times G)/(B \times B_-) \cong G/B \times G/B_-.
$$
In \cite{WY}, Webster and Yakimov generalised the decompositions of Deodhar to the double flag variety and showed that each piece of the decompositions is coisotropic with respect to a naturally defined Poisson structure $\pi$ on $G/B \times G/B_-$. \\ 
\indent
The Deodhar decomposition of $G/B$ can be better understood through a natural cell decomposition on Bott-Samelson varieties (see \cite{Du, H}). Motivated by the problem of better understanding the decompositions by Webster-Yakimov, we introduce { \it double Bott-Samelson varieties $Z_{\bfu, \bfv}$}, where $\bfu=(s_1, ..., s_l)$ and $\bfv=(s_{l+1}, ..., s_n)$ are any two sequences of simple reflections in the Weyl group $W$ of $G$. We construct cell decompositions of $Z_{\bfu, \bfv}$ which give rise to the Deodhar-type decompositions of $G/B \times G/B_-$ by Webster-Yakimov when  $\bfu$ and $\bfv$ are reduced. Coordinates are constructed on each piece of the decompositions, and for those cells corresponding to the so-called positive subexpressions (see Definition \ref{defn of positivity}), we relate these coordinates to certain regular functions on an open subset of $Z_{\bfu, \bfv}$. \\
\indent
This article consists of the first part of the author's PhD thesis at the University of Hong Kong, in which a Poisson structure $\pi_{\bfu, \bfv}$ on $Z_{\bfu, \bfv}$ is also studied. The Poisson structure $\pi_{\bfu, \bfv}$ has the property that the natural multiplication map (see (\ref{theta_ bfu bfv}))
$$
\theta_{\bfu, \bfv}: (Z_{\bfu, \bfv}, \pi_{\bfu, \bfv}) \rightarrow (G/B \times G/B_-, \pi)
$$
is Poisson. In a forthcoming paper, which will be partially based on the second part of the author's PhD thesis, the cell decompositions of $Z_{\bfu, \bfv}$ and the coordinates there on  will be used to study the Poisson structure $\pi_{\bfu, \bfv}$.

\subsection{The double Bott-Samelson variety $Z_{\bfu, \bfv}$}

Throughout this article, if $K$ is a Lie group, $Q$ a closed subgroup of $K$, and $K_1, ..., K_n$ submanifolds of $K$ invariant under left and right multiplications by elements in $Q$, then 
$$
K_1 \times_Q \cdots \times_Q K_n /Q
$$
denotes the quotient of $K_1 \times \cdots \times K_n$ by the right action of $Q^n$ defined by 
\begin{equation} \label{times_Q}
(k_1, ..., k_n) \cdot (q_1, ..., q_n) = (k_1q_1, \; q_1^{-1}k_2q_2, \; ..., \;q_{n-1}^{-1}k_nq_n), \;\;\; k_i \in K_i, q_j \in Q.
\end{equation}
For two sequences $\bfu = (s_1, ..., s_l)$, $\bfv = (s_{l+1}, ..., s_n)$ of simple reflections in the Weyl group $W$ of $G$, let $Z_{\bfu}$ and $Z_{-\bfv}$ be the Bott-Samelson varieties 
$$
Z_{\bfu} = P_{s_1} \times_{B} \cdots \times_B P_{s_l}/B, \;\; \;\;\;Z_{-\bfv} = P_{-s_{l+1}} \times_{B_-} \cdots \times_{B_-} P_{-s_n}/B_-,
$$
where $P_s = B \cup BsB$ and $P_{-s} = B_- \cup B_-sB_-$ for a simple reflection $s$. Let
$$
Z_{\bfu, \bfv}=Z_{\bfu} \times Z_{-\bfv}
$$
and call $Z_{\bfu, \bfv}$ the \textit{double Bott-Samelson variety} associated to $(\bfu, \bfv)$. The multiplication in $G \times G$ induces a morphism (see (\ref{theta_ bfu bfv}))
$$
\theta_{\bfu, \bfv}: \;\; Z_{\bfu, \bfv} \rightarrow G/B \times G/B_-.
$$
Of particular interest is the open subset
\begin{equation} \label{calO bfu bfv}
\calO^{\bfu, \bfv}=\left( Bs_1B \times_B \cdots \times_B Bs_lB/B \right) \times \left( B_-s_{l+1}B_- \times_{B_-} \cdots \times_{B_-} B_-s_nB_-/B_- \right)
\end{equation}
of $Z_{\bfu, \bfv}$.  When $\bfu, \bfv$ are reduced, $\theta_{\bfu, \bfv}$ induces an isomorphism
between $\calO^{\bfu, \bfv}$ and the product Schubert cell
$$
(Bs_1\cdots s_lB)/B \times (B_- s_{l+1} \cdots s_n B_-)/B_- \subset G/B \times G/B_-
$$
and thus also a birational isomorphism between $Z_{\bfu, \bfv}$ and the product Schubert variety 
$$
\overline{(Bs_1\cdots s_lB)/B} \times \overline{(B_- s_{l+1} \cdots s_n B_-)/B_-} \subset G/B \times G/B_-.
$$
One of our aims is to demonstrate that double Bott-Samelson varieties not only provide 
resolutions of singularities of Schubert varieties, they also have interesting geometry of their own.

\subsection{Cell decompositions of $Z_{\bfu, \bfv}$ associated to shuffles}

Recall that an $(l, n)$-shuffle is an element $\sigma$ of the symmetric group $S_n$ such that 
$$
\sigma(1) < \sigma(2) < \cdots < \sigma(l), \;\;\;\;\; \sigma(l+1) < \sigma(l+2) < \cdots < \sigma(n).
$$
Let $\sigma$ be an $(l, n)$-shuffle. By  a \textit{$\sigma$-shuffled subexpression} of $(\bfu, \bfv)$ we mean a sequence $\gamma = (\gamma_1, \gamma_2, ..., \gamma_n)$, where for each $1 \leqslant j  \leqslant n$, $\gamma_j =s_{\sigma^{-1}(j)}$ or $e$, the identity element of $W$. Let $\Upsilon_{\sigma(\bfu, \bfv)}$ be the set of all $\sigma$-shuffled subexpressions of $(\bfu, \bfv)$. Our first construction,
presented in Section 3, is a decomposition 
\begin{equation}   \label{Zuv-C-gamma}
Z_{\bfu, \bfv} = \bigsqcup_{\gamma \in \Upsilon_{\sigma(\bfu, \bfv)}} C_\sigma^\gamma 
\end{equation}
associated to each $(l, n)$-shuffle $\sigma$, where for each $\gamma \in \Upsilon_{\sigma(\bfu, \bfv)}$, 
$$
C_{\sigma}^{\gamma} \cong \IC^{n-|J(\gamma)|}
$$
for a subset $J(\gamma)$ of $\{1, 2,\ldots, n\}$ associated to $\gamma$. More precisely, for each $(l, n)$-shuffle $\sigma$, we define an embedding of $Z_{\bfu, \bfv}$ into the quotient variety
\begin{equation}  \label{defn of DF_n}
DF_n = (G \times G) \times_{B \times B_-} \cdots \times_{B \times B_-} (G \times G)/(B \times B_-).
\end{equation}
Let $Z_{\bfu, \bfv}^\sigma$ be the image of $Z_{\bfu, \bfv}$ in $DF_n$. Using certain combinatorial data associated to points in $DF_n$ we first arrive at the set-theoretical decomposition 
\begin{equation}   \label{Zuv-C-gamma-1}
Z_{\bfu, \bfv}^\sigma =\bigsqcup_{\gamma \in \Upsilon_{\sigma(\bfu, \bfv)}} C^\gamma.
\end{equation}
We introduce an open affine covering $\{\calO^\gamma \mid \gamma \in \Upsilon_{\sigma(\bfu, \bfv)}\}$ of $Z_{\bfu, \bfv}^\sigma$ and coordinates $\{z_1, z_2,\ldots, z_n\}$ on each $\calO^\gamma$, and we then show that the combinatorially defined set $C^\gamma$ coincides with the subset of $\calO^\gamma$ defined by $z_j = 0$ for $j \in J(\gamma)$.\\
\indent
The image of $\calO^{\bfu, \bfv} \subset Z_{\bfu, \bfv}$ under the embedding of $Z_{\bfu, \bfv}$ into $DF_n$ is $\calO^{\sigma(\bfu, \bfv)}$, where $\sigma(\bfu, \bfv)=(s_{\sigma^{-1}(1)}, \, s_{\sigma^{-1}(2)}, \, ..., s_{\sigma^{-1}(n)})$. The decomposition in 
(\ref{Zuv-C-gamma-1}) then gives rise to the decomposition
\begin{equation}\label{O-distinguished}
{\mathcal O}^{\sigma(\bfu, \bfv)} = \bigsqcup_{\gamma \in \Upsilon_{\sigma(\bfu, \bfv)}^d} C^\gamma \cap {\mathcal O}^{\sigma(\bfu, \bfv)},
\end{equation}
where $\Upsilon_{\sigma(\bfu, \bfv)}^d$ denotes the set of \textit{distinguished $\sigma$-shuffled subexpressions} of $(\bfu, \bfv)$
(see Definition \ref{distinguished subexpr}). We show that for each $\gamma \in \Upsilon_{\sigma(\bfu, \bfv)}^d$,
$$
C^\gamma \cap {\mathcal O}^{\sigma(\bfu, \bfv)} \cong {\mathbb C}^k \times ({\mathbb C}^*)^m
$$
for some integers $k, m  \geqslant 0$ (see Proposition \ref{O cap C^gamma}). When $\bfu$ and $\bfv$ are reduced, we show in Section \ref{rel with WY} that the decomposition  in (\ref{O-distinguished}) corresponds to that of 
$$
(Bs_1\cdots s_lB)/B \times (B_- s_{l+1} \cdots s_n B_-)/B_- \subset G/B \times G/B_-
$$
by Webster and Yakimov under the isomorphisms
$$
{\mathcal O}^{\sigma(\bfu, \bfv)} \cong \calO^{\bfu, \bfv} \cong (Bs_1\cdots s_lB)/B \times (B_- s_{l+1} \cdots s_n B_-)/B_-. 
$$
As in \cite{WY} for the case when $\bfu$ and $\bfv$ are reduced, for each $\gamma \in \Upsilon_{\sigma(\bfu, \bfv)}^d$, we describe in Section 4 the subset 
$C^\gamma \cap {\mathcal O}^{\sigma(\bfu, \bfv)}$ of ${\mathcal O}^{\sigma(\bfu, \bfv)}$ 
using a set of regular functions $\{\psi_{\gamma, j}\}_{j=1, \ldots, n}$ on ${\mathcal O}^{\sigma(\bfu, \bfv)}$ which are defined in terms of generalized minors \cite{double}. When $\gamma$ is \textit{positive} (see  Definition \ref{defn of positivity}), we show that the coordinate functions $\{z_j \mid j \notin J(\gamma)\}$ on $C^\gamma$ and the regular functions $\psi_{\gamma, j}$ are related by monomial transformations.

\subsection{General notation}    \label{all the nota}

If an upper case letter denotes a Lie group, its Lie algebra will be denoted by the corresponding lower case gothic letter. The identity element in any group will be denoted by $e$. \\
\indent
For integers $k \leqslant n$, we denote by $[k,n]$ the set of integers $j$ such that $k \leqslant j \leqslant n$. \\
\indent
If a set $X$ has a right action by a group $L$ and $p: X \rightarrow Y = X/L$ is the projection, we set $p(x) = [x]_Y \in Y$ for $x \in X$.

\section{Lie theory background}

Here we set up the notation from Lie theory, recall the definition of Bott-Samelson varieties and basic facts about the double flag variety of a complex semisimple Lie group.

\subsection{Notation}

Recall that $G$ is a connected complex semisimple Lie group, and that we have fixed a pair $(B, B_-)$ of opposite Borel subgroups. Let $H = B \cap B_-$ be the maximal torus defined by $B$ and $B_-$. Let $N$ and $N_-$ be respectively the unipotent radicals of $B$ and $B_-$. Let $\triangle$ be the root system defined by $H$, and let $\triangle_+$ and $\Gamma$ be respectively the sets of positive and simple roots defined by $B$. Let $\g = \gh \oplus \sum_{\alpha \in \triangle} \g_{\alpha}$ be the root space decomposition of $\g$. Let $\langle , \rangle_{\g}$ be a fixed nonzero multiple of the Killing form on $\g$. Recall that the restriction of $\langle , \rangle_{\g}$ to $\gh$ is nondegenerate and  defines a nondegenerate symmetric bilinear form on $\gh^*$, which will be denoted by $\langle , \rangle$. For $\alpha \in \triangle_+$, let $h_{\alpha} \in \gh$ be the unique element in $[\g_{\alpha}, \g_{-\alpha}]$ such that $\alpha(h_{\alpha}) = 2$. Fix root vectors $e_{\alpha} \in \g_{\alpha}$ and $e_{-\alpha} \in \g_{-\alpha}$ such that $[e_{\alpha}, e_{-\alpha}] = h_{\alpha}$. Let $\phi_{\alpha}: \gs \gl(2, \IC) \rightarrow \g$ be the Lie algebra homomorphism defined by 
 $$
 \left(\begin{array}{cc}0 & 1 \\0 & 0\end{array}\right) \mapsto e_{\alpha}, \text{ and }
 \left(\begin{array}{cc}0 & 0 \\1 & 0\end{array}\right) \mapsto e_{-\alpha}. 
 $$
 The corresponding Lie group homomorphism from $SL(2, \IC)$ to $G$ will also be denoted by $\phi_{\alpha}$. For $z \in \IC$, let 
 $$
 x_{\alpha}(z) = \phi_{\alpha}\left(\begin{array}{cc}1 & z \\0 & 1\end{array}\right), \;\;\;
 x_{-\alpha}(z) = \phi_{\alpha}\left(\begin{array}{cc}1 & 0 \\z & 1\end{array}\right), \text{ and }
 $$
 $$
 \bar{s}_{\alpha} = \phi_{\alpha}\left(\begin{array}{cc}0 & -1 \\1 & 0\end{array}\right). 
 $$
 
 Let $N_G(H)$ be the normaliser subgroup of $H$ in $G$, and let $W = N_G(H) / H$ be the Weyl group of $G$. Let $S = \{s_{\alpha} = \bar{s}_{\alpha}H \mid \alpha \in \Gamma \}$. It is well known that the pair $(W, S)$ forms a Coxeter system. In particular, $W$ is generated by the simple reflections $s_{\alpha}$, $\alpha \in \Gamma$. We denote the action of $W$ on $H$ by conjugation as a right action by $h^w = \dot{w}^{-1}h\dot{w}$, where $h \in H$, $w \in W$ and $\dot{w}$ is any representative of $w$.  \\
 \indent
Let $X^*(H) = \Hom(H, \IC^*)$ and $X_*(H)= \Hom(\IC^*, H)$ be respectively the lattices of characters and co-characters of $H$. We write the action of $\lambda \in X^*(H)$ on $H$ by $h^{\lambda} \in \IC^*$, $h \in H$. There is a natural embedding of $X^*(H)$ in $\gh^*$ and $X_*(H)$ in $\gh$, and the natural pairing $(,):\gh^*\times \gh \rightarrow \IC$ restricts to a perfect pairing of free abelian groups between $X^*(H)$ and $X_*(H)$ which we also denote by $(, )$. Recall that for  $\alpha \in \triangle_+$ the coroot  $\calpha \in X_*(H)$ is given by
$$
\calpha(z) = \phi_{\alpha}\left(\begin{array}{cc}z & 0 \\0 & z^{-1}\end{array}\right), \; z \in \IC^*. 
$$

An element $x \in N_-HN$ can be uniquely written $x = [x]_-[x]_0[x]_+$, with $[x]_- \in N_-$, $[x]_0 \in H$, and $[x]_+ \in N$. \\

\indent
When $G$ is simply connected, denote by $\lambda_{\alpha} \in X^*(H)$ the fundamental weight associated to $\alpha \in \Gamma$. Recall that $\lambda_{\alpha}$ is defined by $(\lambda_{\alpha}, \check{\beta}) = \delta_{\alpha, \beta}$, $\beta \in \Gamma$.   \\

 \indent
The Weyl group $W$ of $G$ is naturally endowed with a partial ordering $\leqslant$,  called the \textit{Chevalley-Bruhat order}, defined by 
$$
v \leqslant w, \text{ if } BvB \subset \overline{BwB}, \;\; v, w \in W,
$$
where $\overline{BuB}$ is the Zariski closure of $BuB$ in $G$, $u \in W$. For $w \in W$, let $l(w)$ be the smallest integer such that $w$ can be written as a product of simple reflections. The function $l: W \rightarrow \IN$ is called the \textit{length function} of $W$. If $w = s_1 \cdots s_{l(w)}$, with $s_j \in S$,  one calls the finite sequence $(s_1, ..., s_{l(w)})$ a \textit{reduced expression} for $w$. Let $R(w)$ be the set of all reduced expressions for $w$.  If $(s_1, ..., s_k) \in R(w)$, define $\bar{w} = \bar{s}_1 \cdots \bar{s}_k \in N_G(H)$. It is well known (see \cite[Proposition 9.3.2]{S}) that $\bar{w}$ does not depend on the choice of the reduced expression.  Let $v, w \in W$. 
If $(s_1, ..., s_k)$ is a reduced expression for $w$, then $v \leqslant w$ if and only if $v = s_{i_1} \cdots s_{i_l}$, where $(i_1< \cdots < i_l)$ is a subsequence of $(1, ..., k)$. \\
\indent
Recall that a \textit{monoid} is a pair $(M, \ast)$, where $M$ is a set, and $\ast: M \times M \rightarrow M$  a binary operation which is associative, and admits an identity element $e \in M$.  The Weyl group has a natural monoid product $\ast$, given by 
$$
s_{\alpha} \ast w = \max \{s_{\alpha}w, w\} = \left\{ \begin{array}{ll}
s_{\alpha}w, & \text{ if } w < s_{\alpha}w   \\
w,                   & \text{ if } s_{\alpha}w < w,
\end{array} \right.  \;\; \alpha \in \Gamma, w \in W,
$$
and 
$$
u \ast w = s_1 \ast (s_2 \ast ( \cdots s_l \ast w)), \;\; u, w \in W,
$$
where $(s_1, ..., s_l)$ is any reduced expression for $u$. \\
\indent
In \cite{HL} two operations on $W$ are introduced. For $w \in W$ and a simple root $s_{\alpha}$, define 
$$
s_{\alpha} \rhd w = \min \{s_{\alpha}w, w\} = \left\{ \begin{array}{ll}
s_{\alpha}w, & \text{ if } s_{\alpha}w < w   \\
w,                   & \text{ if } w < s_{\alpha}w,
\end{array} \right.
$$
and 
$$
w \lhd s_{\alpha} = \min \{ws_{\alpha}, w\} = \left\{ \begin{array}{ll}
ws_{\alpha}, & \text{ if } ws_{\alpha} < w   \\
w,                   & \text{ if } w < ws_{\alpha}.
\end{array} \right.
$$
For $u \in W$ define 
\begin{align*}
u \rhd w  & = s_1 \rhd (s_2 \rhd (\cdots  s_n \rhd w)), \\
w \lhd u & = ((w \lhd s_1) \lhd s_2) \cdots \lhd s_n,
\end{align*}
where $(s_1, ..., s_n)$ is a reduced expression for $u$. These two definitions are independent of the choice of the reduced expression for $u$. The operation $\lhd$ (resp. $\rhd$) is a right (resp. left) monoidal action of $(W, \ast)$ on $W$. See \cite{HL} for more details.

\subsection{Bott-Samelson varieties}

Bott-Samelson varieties have their origins in the papers \cite{BS} of Bott-Samelson and \cite{De} of  Demazure. For more details, see \cite{B, BK}. For any integer $n \geqslant 1$, let
\begin{equation} \label{defn F_pm n}
F_n = G \times_B \cdots \times _B G/B, \;\; F_{-n} = G \times_{B_-} \cdots \times_{B_-} G/B_-.
\end{equation}
Recall that if $s$ is a simple reflection, then $P_s = B \cup BsB$, and $P_{-s} = B_- \cup B_-s B_-$. Let $\bfu = (s_1, ..., s_l)$ be a sequence of simple reflections. Then one has 
\begin{align*}
Z_{\bfu}  & = P_{s_1} \times_B \cdots \times_B P_{s_l}/B \subset F_l,   \\
Z_{-\bfu} & = P_{-s_1} \times_{B_-} \cdots \times_{B_-} P_{-s_l}/B_- \subset F_{-l}. 
\end{align*}
Both $Z_{\bfu}$ and $Z_{-\bfu}$ are smooth projective varieties of complex dimension $l$. Let $\theta_{\bfu}: Z_{\bfu} \rightarrow G/B$ and $\theta_{-\bfu}: Z_{-\bfu} \rightarrow G/B_-$ be the multiplication maps
\begin{align*}
\theta_{\bfu}([p_1, ..., p_l]_{Z_{\bfu}}) & = p_1 \cdots p_l.B,    \\
\theta_{-\bfu}([p_{-1}, ..., p_{-l}]_{Z_{-\bfu}}) & = p_{-1} \cdots p_{-l}.B_-, 
\end{align*}
where $p_i \in P_{s_i}$ and $p_{-i} \in P_{-s_i}$. The image of  $\theta_{\bfu}$ is the Schubert variety $\overline{BuB/B}$, where $u = s_1 \ast \cdots \ast s_l$. If $(s_1, ..., s_l) \in R(u)$, then it is well known that 
$$
\theta_{\bfu}: Z_{\bfu} \rightarrow \overline{BuB/B}
$$
is a proper, surjective, birational isomorphism.

\subsection{The double flag variety and double Bott-Samelson varieties}

The double flag variety 
$$
DF_1 = (G \times G) /(B \times B_-) \cong G/B \times G/B_-
$$
has the natural left $G \times G$ action given by 
$$
(g_1, g_2)\cdot (h_1.B, \; h_2.B_-) = (g_1h_1.B, \; g_2h_2.B_-), \;\; g_i, h_i \in G.
$$ 
Denote by $G_{\diag}$ the diagonal subgroup of $G \times G$, and for any $w \in W$, let $G_{\diag}(w)$ be the $G_{\diag}$-orbit passing through the point $(w B, B_-)$.  It is well known that the map $w \mapsto G_{\diag}(w)$ gives a parametrization of the $G_{\diag}$-orbits in $DF_1$ by $W$. Identifying $G_{\diag}$ with $G$, the stabilizer subgroup of $(w B,   B_-)$ is $B_- \cap wBw^{-1}$. Thus
$$
\dim_{\IC}(G_{\diag}(w)) = | \triangle | - l(w). 
$$
For $h_1, h_2 \in G$,  $(h_1.B, h_2.B_-)$ lies in $G_{\diag}(w)$ if and only if $h_2^{-1}h_1 \in B_-wB$. \\
\indent
For $u, v \in W$, let $\calO^{u,v}$ be the orbit of $B \times B_-$ passing through the point $(u B, v B_-)$. The map $(u,v) \mapsto \calO^{u,v}$ is a parametrization of the $B \times B_-$- orbits in $DF_1$ by $W \times W$. One has 
$$
\dim_{\IC}(\calO^{u,v}) = l(u) + l(v).
$$
For $u,v,w \in W$, define $\calO^{u,v}_w = \calO^{u,v} \cap G_{\diag}(w)$. 

\begin{pro} \label{w <- v^-1 ast u}  \cite{WY}
Let $u,v,w \in W$. Then $\calO^{u,v}_w$ is nonempty if and only if $w \leqslant v^{-1} \ast u$. 
\end{pro}

If $\bfu = (s_1, ..., s_l)$ and $\bfv = (s_{l+1}, ..., s_n)$ are two sequences of simple reflections, recall that we have defined the double Bott Samelson variety associated to $(\bfu, \bfv)$ as  
$$
Z_{\bfu, \bfv} = Z_{\bfu} \times Z_{-\bfv}.
$$
Let $\theta_{\bfu, \bfv}: Z_{\bfu, \bfv} \rightarrow DF_1$ be the map defined by 
\begin{align}  \label{theta_ bfu bfv}  
\theta_{\bfu, \bfv}([p_1, ..., p_l]_{Z_{\bfu}}, [p_{-(l+1)}, ..., p_{-n}&]_{Z_{-\bfv}})  = (p_1 \cdots p_l.B, p_{-(l+1)} \cdots p_{-n}.B_-)
\end{align}
where $p_i \in P_{s_i}$, and $p_{-j} \in P_{-s_j}$. The image of $\theta_{\bfu, \bfv}$ is the product Schubert cell 
$$
\overline{Bs_1 \ast \cdots \ast s_lB/B} \times \overline{B_- s_{l+1} \ast \cdots \ast s_nB_-/B_-}.
$$
If $\bfu$ and $\bfv$ are reduced, then $\theta_{\bfu, \bfv}$ restricts to an isomorphism 
$$
\theta_{\bfu, \bfv} \mid_{\calO^{\bfu, \bfv}} : \; \calO^{\bfu, \bfv} \rightarrow \calO^{u,v},
$$
where $u = s_1 \cdots s_l$, $v = s_{l+1} \cdots s_n$, and $\calO^{\bfu, \bfv}$ has been defined in (\ref{calO bfu bfv}).

\section{Affine charts and cell decompositions of $Z_{\bfu, \bfv}$ associated to shuffles}

If $\bfu = (s_1, ..., s_l)$, $\bfv = (s_{l+1}, ..., s_n)$ are two sequences of simple reflections, we construct for any $(l,n)$-shuffle an embedding of $Z_{\bfu, \bfv}$ into $DF_n$. Using some combinatorial data associated to points in $DF_n$, we give a set-theoretic decomposition 
$$
Z^{\sigma}_{\bfu, \bfv} = \bigsqcup_{\gamma \in \Upsilon_{\sigma(\bfu, \bfv)}} C^{\gamma},
$$
of the image $Z^{\sigma}_{\bfu, \bfv}$ of $Z_{\bfu, \bfv}$ in $DF_n$. We then cover  $Z^{\sigma}_{\bfu, \bfv}$ with $2^n$ open subsets each of which is isomorphic to $\IC^n$, and use this covering to prove that each $C^{\gamma}$ is isomorphic to $\IC^k$, for some $k \in [1,n]$. \\
\indent
In Section 3.8, we discuss the relation between our decompositions of $Z_{\bfu, \bfv}$ and the Deodhar-type decompositions of $G/B \times G/B_-$ constructed in \cite{WY}.

\subsection{Double flag varieties $DF_n$}

Let $n\geqslant 1$ and recall  
$$ 
DF_n = (G \times G) \times_{B \times B_-} \cdots \times_{B \times B_-} (G \times G)/(B \times B_-)
$$
from (\ref{defn of DF_n}). 
Then $DF_n$ is naturally isomorphic to $F_n \times F_{-n}$ via the isomorphism 
\begin{equation} \label{varphi_n}
\varphi_n: DF_n \rightarrow F_n \times F_{-n}, \;  [(g_1,h_1), ..., (g_n,h_n)]_{DF_n} \mapsto \left( [g_1, ..., g_n]_{F_n}, [h_1, ..., h_n]_{F_{-n}} \right). 
\end{equation}
Moreover, $DF_n$ is naturally endowed with a map to $DF_1$, 
\begin{equation} \label{teta_n}
\theta_n: DF_n \rightarrow DF_1, \; [(g_1,h_1), ..., (g_n,h_n)]_{DF_n} \mapsto (g_1 \cdots g_n.B, h_1 \cdots h_n.B_-),
\end{equation}
and for $j \in [1,n-1]$, one has the projection 
\begin{equation} \label{pee n}
\rho_{j,n}: DF_n \rightarrow DF_j, \;  [(g_1,h_1), ..., (g_n,h_n)]_{DF_n} \mapsto  [(g_1,h_1), ..., (g_j,h_j)]_{DF_j}. 
\end{equation}
Let $x \in DF_n$. Let $w_n(x)$ be the unique Weyl group element such that $\theta_n(x) \in G_{\diag}(w_n)$, and for $j \in [1,n-1]$, denote by $w_j(x) \in W$ the unique element such that $\theta_j(\rho_{j,n}(x)) \in G_{\diag}(w_j)$. One thus obtains a map 
\begin{equation} \label{Phi_n}
\Phi_n: DF_n \rightarrow W^n
\end{equation}
by sending $x$ to $\Phi_n(x) = (w_1(x), ..., w_n(x))$.

\subsection{Embeddings of $Z_{\bfu, \bfv}$ into $DF_n$ using shuffles}

\begin{defn} \label{sigma bfu bfv}
Let $n \geqslant 1$ and $n \geqslant l \geqslant 0$ be integers and denote by $S_{l,n}$ the set of all $(l,n)$-shuffles. To any $\sigma \in S_{l,n}$ we associate a sequence 
$$
\epsilon(\sigma) = (\epsilon(\sigma)_1, ..., \epsilon(\sigma)_n)
$$ 
of elements in $\{1, -1\}$ by setting 
$$
\epsilon(\sigma)_{\sigma(j)} = \left\{ \begin{array}{ll}
 -1, & \text{ if } j \in [1,l]  \\
 1, & \text{ if } j \in [l+1,n].
\end{array} \right. 
$$
For notational simplicity, once $\sigma$ is fixed, we will write $\epsilon = (\epsilon_1, ..., \epsilon_n)$ instead of $\epsilon(\sigma) = (\epsilon(\sigma)_1, ..., \epsilon(\sigma)_n)$. Note that there is a one to one correspondance between $S_{l,n}$ and the set of sequences $\epsilon \in \{1, -1\}^n$, with $-1$ appearing exactly $l$ times.  \\
\indent
Let $\bfu = (s_1, ..., s_l)$ and $\bfv = (s_{l+1}, ..., s_n)$ be two sequences of simple reflections and let $\sigma \in S_{l,n}$. Define the sequence 
\begin{equation} \label{delta}
\sigma(\bfu, \bfv) = (\delta_1, ..., \delta_n)
\end{equation}
by letting $\delta_{\sigma(j)} = s_j$, $j \in [1,n]$. 
\end{defn}

Let $\sigma$ be an $(l,n)$-shuffle. Define maps $\mu_{\sigma}^-: G^n \rightarrow G^l$ and $\mu_{\sigma}^+: G^n \rightarrow G^{n-l}$ by
\begin{align} \label{mu_sigma^pm}
\mu_{\sigma}^-(g_1, ..., g_n) & = (g_1\cdots g_{\sigma(1)}, g_{\sigma(1)+1} \cdots g_{\sigma(2)}, ..., g_{\sigma(l-1)+1} \cdots g_{\sigma(l)})    \notag    \\
\mu_{\sigma}^+(g_1, ..., g_n) & = (g_1 \cdots g_{\sigma(l+1)}, g_{\sigma(l+1)+1} \cdots g_{\sigma(l+2)}, ..., g_{\sigma(n-1)+1} \cdots g_{\sigma(n)}).
\end{align}
Both $\mu_{\sigma}^-$ and $\mu_{\sigma}^+$ descend to maps $[\mu_{\sigma}^-]: F_n \rightarrow F_l$ and $[\mu_{\sigma}^+]: F_{-n} \rightarrow F_{-(n-l)}$ given by 
\begin{align} \label{[mu_sigma^pm]}
[\mu_{\sigma}^-] ([g_1, ..., g_n]_{F_n}) & = [g_1\cdots g_{\sigma(1)}, g_{\sigma(1)+1} \cdots g_{\sigma(2)}, ..., g_{\sigma(l-1)+1} \cdots g_{\sigma(l)}]_{F_l}     \notag    \\
[\mu_{\sigma}^+] ([g_1, ..., g_n]_{F_{-n}}) & =  [g_1 \cdots g_{\sigma(l+1)}, g_{\sigma(l+1)+1} \cdots g_{\sigma(l+2)}, ..., g_{\sigma(n-1)+1} \cdots g_{\sigma(n)}]_{F_{-(n-l)}}.
\end{align}
For $j \in [1,n]$, define 
\begin{equation} \label{G^sigma_j}
G^{\sigma}_j = \left\{ \begin{array}{ll}
G \times B_-, & \text{ if } \epsilon_j = -1    \\
B \times G, & \text{ if } \epsilon_j = 1,
\end{array} \right. 
\end{equation}
and 
\begin{equation} \label{F^sigma}
F^{\sigma}_{l, n} = G^{\sigma}_1 \times_{B \times B_-} \cdots \times_{B \times B_-} G^{\sigma}_n/(B \times B_-) \subset DF_n
\end{equation}
For each $j \in [1,n]$, let $G^{\sigma}_{j,L}$ and $G^{\sigma}_{j, R}$ be respectively the first and second component of $G^{\sigma}_j$. Then the diffeomorphism $\varphi_n$, recall (\ref{varphi_n}), restricts to a diffeomorphism 
$$
\varphi_n \mid_{F^{\sigma}_{l, n}}: F^{\sigma}_{l, n} \rightarrow F^{\sigma}_L \times F^{\sigma}_R,
$$
where 
$$
F^{\sigma}_L = G^{\sigma}_{1, L} \times_B \cdots \times_B G^{\sigma}_{n, L}/B \subset F_n, \;\;  F^{\sigma}_R = G^{\sigma}_{1, R} \times_{B_-} \cdots \times_{B_-} G^{\sigma}_{n, R}/B_- \subset F_{-n}.
$$
By composing with $[\mu_{\sigma} ^-] \times [\mu_{\sigma}^+]$, one obtains the diffeomorphism  
\begin{equation} \label{Psi^sigma}
\varphi^{\sigma} =  ([\mu_{\sigma} ^-] \times [\mu_{\sigma}^+]) \circ \left( \varphi_n \mid_{F^{\sigma}_{l, n}} \right): F^{\sigma}_{l, n} \rightarrow F_l \times F_{-(n-l)}.
\end{equation}
Explicitly, $\varphi^{\sigma}$ is given by 
\begin{align*}
\varphi^{\sigma}([(p_1, q_1), ..., & (p_n,q_n)]_{DF_n})  = \left( [(p_1\cdots p_{\sigma(1)}, p_{\sigma(1)+1} \cdots p_{\sigma(2)}, ..., p_{\sigma(l-1)+1} \cdots p_{\sigma(l)}]_{F_l}  \right.    \\
  & \left. [q_1 \cdots q_{\sigma(l+1)}, q_{\sigma(l+1)+1} \cdots q_{\sigma(l+2)}, ..., q_{\sigma(n-1)+1} \cdots q_{\sigma(n)}]_{F_{-(n-l)}} \right) , 
\end{align*}
where $(p_j,q_j) \in G^{\sigma}_j$. \\

Now, let $\bfu = (s_1, ..., s_l)$ and $\bfv = (s_{l+1}, ..., s_n)$ be two sequences of simple reflections and let $\sigma(\bfu, \bfv) = (\delta_1, ..., \delta_n)$. For $j \in [1,n]$ define 
\begin{equation} \label{P^sigma_j}
P^{\sigma}_j = \left\{
\begin{array}{cc}
P_{\delta_j} \times B_-, & \text{ if } \epsilon_j = -1   \\
B \times P_{-\delta_j}, & \text{ if } \epsilon_j = 1 ,
\end{array} \right.
\end{equation}
and let 
\begin{equation} \label{Z^sigma_bfu, bfv}
Z^{\sigma}_{\bfu, \bfv} = P^{\sigma}_1 \times_{B \times B_-} \cdots \times_{B \times B_-} P^{\sigma}_n/(B \times B_-) \subset F^{\sigma}_{l,n}.
\end{equation}
Then $\varphi^{\sigma}$ restricts to a diffeomorphism $\varphi^{\sigma} \mid_{Z^{\sigma}_{\bfu, \bfv}}: Z^{\sigma}_{\bfu, \bfv} \rightarrow Z_{\bfu, \bfv}$. One thus has an embedding 
\begin{equation}   \label{I^sigma_bfu bfv}
I^{\sigma}_{\bfu, \bfv} = (\varphi^{\sigma} \mid_{Z^{\sigma}_{\bfu, \bfv}})^{-1}: Z_{\bfu, \bfv}  \hookrightarrow DF_n.
\end{equation}

\begin{exa} \label{example1}
Suppose that $\bfu = (s_1, s_2, s_3)$, $\bfv = (s_4, s_5)$ and that $\sigma$, written as a product of transpositions, is $(24)(35)$, so that 
$$
\begin{array}{lllllll}
\sigma(\bfu, \bfv) & = & (s_1, & s_4, & s_5, & s_2, & s_3)   \\
\epsilon &                = & (-1,    & 1,      & 1,     & -1,    &-1).
\end{array}
$$
Then 
$$
Z^{\sigma}_{\bfu, \bfv} = (P_{s_1} \times B_-) \times_{B \times B_-}  (B \times P_{-s_4}) \times_{B \times B_-} (B \times P_{-s_5}) \times_{B \times B_-} (P_{s_2} \times B_-) \times_{B \times B_-} (P_{s_3} \times B_-) / B \times B_-.
$$
For $j \in [1,5]$, let $b_j \in B$, $b_{-j} \in B_-$, $p_j \in P_{s_j}$ and $p_{-j} \in P_{-s_j}$.  Then 
$$
\varphi^{\sigma} \left( [(p_1, b_{-1}), (b_2, p_{-4}), (b_3, p_{-5}), (p_2, b_{-4}), (p_3, b_{-5})]_{Z^{\sigma}_{\bfu, \bfv}} \right) = \left( [p_1, b_2b_3p_2, p_3]_{Z_{\bfu}}, [b_{-1}p_{-4}, p_{-5}]_{Z_{-\bfv}} \right).
$$
The embedding $I^{\sigma}_{\bfu, \bfv}$ is given by 
$$
I^{\sigma}_{\bfu, \bfv} \left( [p_1, p_2, p_3]_{Z_{\bfu}}, [p_{-4}, p_{-5}]_{Z_{-\bfv}} \right) = [(p_1, e), (e, p_{-4}), (e, p_{-5}), (p_2, e), (p_3, e)]_{Z^{\sigma}_{\bfu, \bfv}}.
$$
\end{exa}

\subsection{Shuffled subexpressions and the subvarieties $C^{\gamma}$}

Let $\bfu, \bfv$ and $\sigma$ be as in Definition \ref{sigma bfu bfv}. Recall the map $\Phi_n: DF_n \rightarrow W^n$ from (\ref{Phi_n}), and the sequence $\sigma(\bfu, \bfv)$ from (\ref{delta}).

\begin{lem} \label{w_1, ..., w_n}
Let $\bfw = (w_1, ..., w_n) \in W^n$. If $\bfw \in \Phi_n(Z^{\sigma}_{\bfu, \bfv})$, then $w_1 \in \{ e, \delta_1 \}$, and for $j \in [2, n]$, 
$$
w_j \in \left\{ \begin{array}{ll}
\{ w_{j-1}, w_{j-1} \delta_j \},  & \text{ if } \epsilon_j = -1    \\
\{ w_{j-1}, \delta_j w_{j-1} \},  & \text{ if } \epsilon_j = 1.    
\end{array} \right.
$$
\end{lem}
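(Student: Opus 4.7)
The plan is to proceed by induction on $j$. For a point $x = [(g_1, h_1), \ldots, (g_n, h_n)]_{DF_n} \in Z^{\sigma}_{\bfu, \bfv}$ (with representatives chosen so that $(g_j, h_j) \in P^{\sigma}_j$), set $a_j = (h_1 \cdots h_j)^{-1}(g_1 \cdots g_j) \in G$, with the convention $a_0 = e$ and $w_0 = e$. Unwinding the definitions of $\theta_j$ and $\rho_{j,n}$ and using the criterion that $(h.B, h'.B_-)$ lies in $G_{\diag}(w)$ if and only if $h'^{-1}h \in B_- w B$, the Bruhat decomposition $G = \bigsqcup_{w \in W} B_- w B$ characterises $w_j$ uniquely by $a_j \in B_- w_j B$. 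Since $a_j = h_j^{-1} a_{j-1} g_j$, the inductive step reduces to tracking which $B_-$--$B$ double coset contains the right hand side.

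The two multiplication formulas I want to invoke are, for every $w \in W$ and every simple reflection $s$,
\begin{equation*}
B_- w B \cdot P_s \; = \; B_- w B \cup B_- w s B \qquad \text{and} \qquad P_{-s} \cdot B_- w B \; = \; B_- w B \cup B_- s w B.
\end{equation*}
Both follow from the two $\mathbb{P}^1$-fibrations $G/B \to G/P_s$ and $P_{-s} \backslash G \to B_- \backslash G$: a single $B_-$-orbit on $G/P_s$ (resp.\ a single $B$-orbit on $P_{-s} \backslash G$) lifts to exactly two Bruhat cells in the total space, namely those labelled by a pair of Weyl elements that differ by $s$ on the appropriate side.

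Granting these formulas, suppose inductively that $a_{j-1} \in B_- w_{j-1} B$. If $\epsilon_j = -1$, then $h_j \in B_-$ and $g_j \in P_{\delta_j}$, whence
\begin{equation*}
a_j \; \in \; B_- \cdot B_- w_{j-1} B \cdot P_{\delta_j} \; = \; B_- w_{j-1} P_{\delta_j} \; = \; B_- w_{j-1} B \cup B_- w_{j-1} \delta_j B,
\end{equation*}
forcing $w_j \in \{w_{j-1}, w_{j-1} \delta_j\}$. If $\epsilon_j = 1$, then $h_j \in P_{-\delta_j}$ and $g_j \in B$, whence
\begin{equation*}
a_j \; \in \; P_{-\delta_j} \cdot B_- w_{j-1} B \cdot B \; = \; P_{-\delta_j} w_{j-1} B \; = \; B_- w_{j-1} B \cup B_- \delta_j w_{j-1} B,
\end{equation*}
forcing $w_j \in \{w_{j-1}, \delta_j w_{j-1}\}$. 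Specialising to $j = 1$ with the convention $w_0 = e$ recovers the base case $w_1 \in \{e, \delta_1\}$, so a single induction handles every $j \in [1,n]$. The only real technical point is the pair of Bruhat multiplication formulas above; once they are in hand, the remainder of the argument is a routine bookkeeping of double cosets.
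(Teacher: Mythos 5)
Your proof is correct and follows essentially the same route as the paper: induct on $j$, characterise $w_j$ by the Birkhoff cell containing $(h_1\cdots h_j)^{-1}(g_1\cdots g_j)$, and invoke the product rules $B_-wB\cdot P_{s} = B_-wB\cup B_-wsB$ and $P_{-s}\cdot B_-wB = B_-wB\cup B_-swB$. The only cosmetic difference is that the paper cites \cite[Appendix A]{HL} for the needed containment $B_-wBsB \subset B_-wsB \cup B_-wB$, while you give a self-contained geometric justification via the $\mathbb{P}^1$-fibration $G/B\to G/P_s$ (resp.\ $P_{-s}\backslash G\to B_-\backslash G$); both are standard and valid.
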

\begin{proof}
Let $x \in Z^{\sigma}_{\bfu, \bfv}$, and $\bfw = \Phi_n(x)$. For $j \in [1,n-1]$, we write $x_j = \rho_{j,n}(x) \in DF_j$ and set $x_n = x$. If $\epsilon_1 = -1$, then $x_1 \in \calO^{e,e} \cup \calO^{{s_1}, e}$. Similarly, if $\epsilon_1 = 1$, $x_1 \in \calO^{e,e} \cup \calO^{e, s_{l+1}}$. By Proposition \ref{w <- v^-1 ast u}, the assertion is true for $j = 1$. Thus assume $j \geqslant 2$, and write $(g.B, h.B_-) = \theta_{j-1}(x_{j-1})$, so that one has $h^{-1}g \in B_-w_{j-1}B$.   \\
\textbf{Case $\epsilon_j = -1$}: One has $\theta_j(x_j) = (gp.B, h.B_-)$, where $p \in P_{\delta_j}$. Thus $h^{-1}gp \in B_-w_jB$, and so 
$$
B_-w_jB \subset B_-w_{j-1}BP_{\delta_j} = B_-w_{j-1}B \cup B_-w_{j-1}B\delta_jB.  
$$
One knows by \cite[Appendix A]{HL} that 
$$
B_-w_{j-1}B\delta_jB \subset B_-w_{j-1}\delta_jB \cup B_-w_{j-1}B, 
$$
which proves the assertion.   \\
\textbf{Case $\epsilon_j = 1$}: One has $\theta_j(x_j) = (g.B, hp.B_-)$, where $p \in P_{-\delta_j}$. Thus $p^{-1}h^{-1}g \in B_-w_jB$, and 
$$
B_-w_jB \subset P_{-\delta_j}B_-w_{j-1}B = B_-w_{j-1}B \cup B_-\delta_jB_-w_{j_1}B.
$$
Again, by using \cite[Appendix A]{HL} one concludes that $w_j \in \{ w_{j-1}, \delta_j w_{j-1} \}$. 
\end{proof}

Let $W^{\bfu, \bfv, \sigma}$ be the elements of $W^n$ satisfying the condition in Lemma \ref{w_1, ..., w_n}. Thus $\Phi_n(Z^{\sigma}_{\bfu, \bfv}) \subset W^{\bfu, \bfv, \sigma}$. We will show in Proposition \ref{C^gamma = tildeC^gamma} that $\Phi_n(Z^{\sigma}_{\bfu, \bfv}) = W^{\bfu, \bfv, \sigma}$.  Lemma \ref{w_1, ..., w_n} motivates the following definition. 

\begin{defn} \label{shuffled subexpressions}
Let $\bfu = (s_1, ..., s_l)$ be a sequence of simple reflections. We say that a sequence $\gamma = (\gamma_1, ..., \gamma_l)$ is a \textit{subexpression of $\bfu$} if $\gamma_j \in \{ e, s_j \}$ for all $j \in [1,l]$. Denote by $\Upsilon_{\bfu}$ the set of all subexpressions of $\bfu$. Let $\bfv = (s_{l+1}, ..., s_n)$ be another sequence, and let $\sigma \in S_{l,n}$.  We say that $\gamma = (\gamma_1, ..., \gamma_n)$ is a \textit{subexpression of $(\bfu, \bfv)$ shuffled by $\sigma$}, or a \textit{$\sigma$-shuffled subexpression of $(\bfu, \bfv)$}, if $\gamma \in \Upsilon_{\sigma(\bfu, \bfv)}$. 
\end{defn}

Fix $(\bfu, \bfv, \sigma)$ as in Definition \ref{sigma bfu bfv}, and let $\gamma \in \Upsilon_{\sigma(\bfu, \bfv)}$. Construct the sequence $(\gamma^0, \gamma^1..., \gamma^n)$ of Weyl group elements as follows. Let $\gamma^0 = e$, and for any $j \in [1,n]$, define 
\begin{equation} \label{defn of gamma^j}
\gamma^j = \left\{ \begin{array}{ll}
\gamma^{j-1} \gamma_j, & \text{ if } \epsilon_j = -1  \\
\gamma_j \gamma^{j-1}, & \text{ if } \epsilon_j = 1.  
\end{array} \right.
\end{equation}
In other words, $(\gamma^1, ..., \gamma^n) \in W^{\bfu, \bfv, \sigma}$. Conversely, it is easily seen that any sequence $(w_0, ..., w_n)$ of elements in $W$ satisfying $w_0 = e$ and $(w_1, ..., w_n) \in W^{\bfu, \bfv, \sigma}$ uniquely defines a $\gamma \in \Upsilon_{\sigma(\bfu, \bfv)}$ such that $(\gamma^1, ..., \gamma^n) = (w_1, ..., w_n)$. Hence the map 
$$
\Upsilon_{\sigma(\bfu, \bfv)} \rightarrow W^{\bfu, \bfv, \sigma}, \;\; \gamma = (\gamma_1, ..., \gamma_n) \mapsto (\gamma^1, ..., \gamma^n)
$$
is a bijection.

\begin{exa}
Let $(\bfu, \bfv, \sigma)$ as in Example \ref{example1}. Then the sequences 
$$
\gamma = (s_1, s_4, e, s_2, e) \text{ and } \eta = (e, s_4, e, e, s_3)
$$
are examples of $\sigma$-shuffled subexpressions of $(\bfu, \bfv)$. One has 
$$
\begin{array}{lllllllllll}
(\gamma^1, & \gamma^2, & \gamma^3, & \gamma^4, & \gamma^5) & = & (s_1, & s_4s_1, & s_4s_1, & s_4s_1s_2, & s_4s_1s_2)   \\
(\eta^1, & \eta^2, & \eta^3, & \eta^4, &\eta^5) & = & (e, & s_4, & s_4, & s_4, & s_4s_3). 
\end{array}
$$
\end{exa}

\begin{defn} \label{C^gamma1}
For any $\gamma \in \Upsilon_{\sigma(\bfu, \bfv)}$, let 
$$
C^{\gamma} = \Phi_n^{-1}(\gamma^1, ..., \gamma^n) \cap Z^{\sigma}_{\bfu, \bfv} \;\;\; \text{ and } \;\;\;
C^{\gamma}_{\sigma} = \varphi^{\sigma}(C^{\gamma}) \subset Z_{\bfu, \bfv}.
$$ 
\end{defn}

One thus has the two disjoint unions 
\begin{equation} \label{disjoint unions}
Z^{\sigma}_{\bfu, \bfv} = \bigsqcup_{\gamma \in \Upsilon_{\sigma(\bfu, \bfv)}} C^{\gamma}  \;\;\; \text{ and } \;\;\; Z_{\bfu, \bfv} = \bigsqcup_{\gamma \in \Upsilon_{\sigma(\bfu, \bfv)}} C^{\gamma}_{\sigma}.
\end{equation}

\subsection{Affine charts and coordinates on $Z^{\sigma}_{\bfu, \bfv}$ }

We introduce an atlas of $2^n$ affine charts on $Z^{\sigma}_{\bfu, \bfv}$ parametrized by $\Upsilon_{\sigma(\bfu, \bfv)}$. As a first application, we show that all the $C^{\gamma}$'s are isomorphic to affine spaces. \\
\indent
Fix $(\bfu, \bfv, \sigma)$ as in Definition \ref{sigma bfu bfv} and let $\gamma \in \Upsilon_{\sigma(\bfu, \bfv)}$. We fix representatives in $N_G(H)$ of the Weyl group elements $\gamma^j$. Set $\wt{\gamma^0} = e$, and for $j \in [1,n]$, let 
\begin{equation} \label{gamma tilde}
\wt{\gamma^j} = \left\{ \begin{array}{ll}
\wt{\gamma^{j-1}} \bar{\gamma_j}, &  \text{ if } \epsilon_j = -1  \\
\bar{\gamma}_j \wt{\gamma^{j-1}}, &  \text{ if } \epsilon_j = 1.
\end{array} \right.  
\end{equation}
Recall from (\ref{delta}) that $\sigma(\bfu, \bfv) =  (\delta_1, ... ,\delta_n)$. Denote by $\alpha_j$ the simple root such that $\delta_j = s_{\alpha_j}$. For $j \in [1,n]$ and $z \in \IC$, let 
\begin{equation} \label{u_gamma, j(z)}
u_{\gamma, j}(z) = (p_{\gamma, j}(z), q_{\gamma, j}(z)) \in P^{\sigma}_j, 
\end{equation}
where
\begin{equation} \label{p_gamma, j(z)}
p_{\gamma, j}(z) = \left\{ \begin{array}{ll}
x_{-\gamma_j \alpha_j}(z) \bar{\gamma}_j, & \text{ if } \epsilon_j = -1   \\
\text{[} \wt{\gamma^{j-1}}^{-1} x_{\gamma_j \alpha_j}(z) \wt{\gamma^{j-1}} \text{]}_+, & \text{ if } \epsilon_j = 1,
\end{array} \right.
\end{equation}
and 
\begin{equation} \label{q_gamma, j(z)}
q_{\gamma, j}(z) = \left\{ \begin{array}{ll}
\text{[} \wt{\gamma^{j-1}}x_{-\gamma_j \alpha_j}(z) \wt{\gamma^{j-1}}^{-1} \text{]}_-, & \text{ if } \epsilon_j = -1   \\
x_{\gamma_j \alpha_j}(z) \bar{\gamma}_j^{-1}, & \text{ if } \epsilon_j = 1. 
\end{array} \right.
\end{equation}
Define 
\begin{equation} \label{calO^gamma}
\calO^{\gamma}  = 
\{[u_{\gamma, 1}(z_1), ..., u_{\gamma, n}(z_n)]_{Z^{\sigma}_{\bfu, \bfv}} \mid z = (z_1, ..., z_n) \in \IC^n \}.
\end{equation}
The fact that the map
\begin{equation}  \label{u_gamma z}
u_{\gamma}: \IC^n \rightarrow \calO^{\gamma}, \;\; 
z \mapsto u_{\gamma}(z) = [u_{\gamma, 1}(z_1), ..., u_{\gamma, n}(z_n)]_{Z^{\sigma}_{\bfu, \bfv}}
\end{equation}
is an isomorphism is a consequence of applying inductively the following Lemma \ref{why coordinates}. 

\begin{lem} \label{why coordinates}
Let $\alpha \in \Gamma$, and let $B \times B_-$ act on $P_{s_{\alpha}} \times B_-$ by right multiplication. For any $\dot{w} \in N_G(H)$, the sets 
$$
\{ (x_{\alpha}(z)\bar{s}_{\alpha}, [ \dot{w}x_{\alpha}(z) \dot{w}^{-1}]_-) \mid z \in \IC \}, \text{ and } 
\{ (x_{-\alpha}(z), [ \dot{w}x_{-\alpha}(z) \dot{w}^{-1}]_-) \mid z \in \IC \}
$$
intersect each $(B \times B_-)$-orbits in exactly one point. Consequently, the two maps 
\begin{align*}
(x_{\alpha}(z)\bar{s}_{\alpha}.B,\;  B_-) & \mapsto (x_{\alpha}(z)\bar{s}_{\alpha}, \; [ \dot{w}x_{\alpha}(z) \dot{w}^{-1}]_-), \\  
(x_{-\alpha}(z).B, \; B_-) & \mapsto (x_{-\alpha}(z), \; [ \dot{w}x_{-\alpha}(z) \dot{w}^{-1}]_-), \; z \in \IC 
\end{align*}
are sections over open subsets of the fiber bundle 
$$
P_{s_{\alpha}} \times B_- \rightarrow (P_{s_{\alpha}} \times B_-)/(B \times B_-).
$$
\end{lem}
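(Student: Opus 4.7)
The plan is to reduce the statement to the standard parametrizations of cells in $P_{s_\alpha}/B \cong \mathbb{P}^1$, after identifying the quotient $(P_{s_\alpha} \times B_-)/(B \times B_-)$ with $P_{s_\alpha}/B$. Under the right action of $B \times B_-$ given by $(p, b_-) \cdot (b, b'_-) = (pb, b_- b'_-)$, the orbit of any $(p, b_-)$ is $pB \times B_-$, so the first projection $(p, b_-) \mapsto pB$ descends to an isomorphism $(P_{s_\alpha} \times B_-)/(B \times B_-) \cong P_{s_\alpha}/B$. In particular, two points of $P_{s_\alpha} \times B_-$ lie in the same $(B \times B_-)$-orbit if and only if the first coordinates lie in the same $B$-coset.

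Under this identification, the first set projects to the image of $z \mapsto x_\alpha(z)\bar{s}_\alpha B$, which is the standard parametrization of the open Bruhat cell $Bs_\alpha B/B$ coming from $\phi_\alpha\colon SL(2, \IC) \to G$; the second set projects to the image of $z \mapsto x_{-\alpha}(z) B$. For the latter, an explicit $SL(2, \IC)$-computation (writing $x_{-\alpha}(z) = x_\alpha(z^{-1})\bar{s}_\alpha t(z)$ with $t(z) \in B$ for $z \neq 0$) shows that this map is a bijection from $\IC$ onto $\{B\} \cup (Bs_\alpha B/B \setminus \{\bar{s}_\alpha B\}) = P_{s_\alpha}/B \setminus \{\bar{s}_\alpha B\}$; injectivity follows from $N_- \cap B = \{e\}$. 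Both images are therefore open subsets of $P_{s_\alpha}/B \cong \mathbb{P}^1$ and together cover it. The $N_-$-components $[\dot{w}x_{\pm\alpha}(z)\dot{w}^{-1}]_-$ are well-defined and lie in $N_- \subset B_-$ because $\dot{w}x_{\pm\alpha}(z)\dot{w}^{-1}$ lies in a single root subgroup (for the root $\pm w\alpha$), hence in either $N$ or $N_-$, and in particular in $N_-HN$.

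With these ingredients in place, the injectivity of each map modulo the $(B \times B_-)$-action is immediate: two points of the same set lying in one orbit project to equal cosets in $P_{s_\alpha}/B$, forcing equality of the parameter by the injectivity of the parametrizations above. For surjectivity onto the orbits over the relevant open subset, given $(p, b_-)$ with $pB \in Bs_\alpha B/B$, write $p = x_\alpha(z)\bar{s}_\alpha b$ uniquely with $(z, b) \in \IC \times B$ and set $b'_- = b_-^{-1}[\dot{w}x_\alpha(z)\dot{w}^{-1}]_- \in B_-$; then
$$
(p, b_-) \cdot (b^{-1}, b'_-) = \bigl(x_\alpha(z)\bar{s}_\alpha,\ [\dot{w}x_\alpha(z)\dot{w}^{-1}]_-\bigr).
$$
An entirely analogous argument, using the decomposition $p = x_{-\alpha}(z) b$ for $pB \in P_{s_\alpha}/B \setminus \{\bar{s}_\alpha B\}$, handles the second set. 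The section statement in the consequence then follows formally, the section being defined on the parametrized open subset.

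The main point is the identification of the quotient with $P_{s_\alpha}/B$; once that is in hand, the remainder is bookkeeping with the Bruhat decomposition of $P_{s_\alpha}$. The only delicate item is verifying that $[\dot{w}x_{\pm\alpha}(z)\dot{w}^{-1}]_-$ makes sense, and this is immediate because a single root subgroup lies entirely in $N$ or entirely in $N_-$.
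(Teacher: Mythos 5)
Your proof is correct and follows essentially the same route as the paper's: the key step in both is to look at the first coordinate and use the Bruhat decomposition of $P_{s_\alpha}$. The paper's proof is extremely terse — it only explicitly verifies that if two points of the given set lie in the same $(B\times B_-)$-orbit then the group element $(b,b_-)$ relating them is trivial, leaving the surjectivity onto the relevant open subset and the well-definedness of $[\dot{w}x_{\pm\alpha}(z)\dot{w}^{-1}]_-$ implicit. You spell out these implicit steps: the identification $(P_{s_\alpha}\times B_-)/(B\times B_-)\cong P_{s_\alpha}/B$, the explicit description of the two open cells in $P_{s_\alpha}/B\cong\mathbb{P}^1$, the observation that $\dot{w}x_{\pm\alpha}(z)\dot{w}^{-1}$ lies in a single root subgroup and hence in $N_-HN$, and the explicit surjectivity computation. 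This is a more complete account of the same underlying argument.
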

\begin{proof}
Suppose that $(x_{\alpha}(z)\bar{s}_{\alpha}, [ \dot{w}x_{\alpha}(z) \dot{w}^{-1}]_-)(b, b_-) = (x_{\alpha}(z')\bar{s}_{\alpha}, [ \dot{w}x_{\alpha}(z') \dot{w}^{-1}]_-)$ for some $z, z' \in \IC$, $b \in B$, $b_- \in B_-$. Looking at the first component, one has $b = e$ and $z = z'$. Thus $b_- = e$. Proceed similarly for the other case.
\end{proof}

Moreover, one sees that $Z^{\sigma}_{\bfu, \bfv}$ is covered by the open subsets $\calO^{\gamma}$, $\gamma \in \Upsilon_{\sigma(\bfu, \bfv)}$. 

\begin{exa} \label{example3}
Let $G = SL(3, \IC)$, which has root system $A_2$. We follow the Bourbaki convention for the labelling of simple roots in simple roots systems and denote the $i$'th simple reflection by the german letter $\gs_i$ (to distinguish it from $s_i$, which is the $i$'th element of the sequence $\bfu$ or $\bfv$) and the $i$'th simple root by $\alpha_i$. Let $\bfu = \bfv = (\gs_1, \gs_2)$ and $\sigma \in S_{2,4}$ be the transposition $(34)$, so that $\epsilon = (-1, 1, -1, 1)$. Let $\gamma = (e, \gs_1, e, e) \in \Upsilon_{\sigma(\bfu, \bfv)}$ and $z = (z_1, z_2, z_3, z_4) \in \IC^4$. Then 
$$
\begin{array}{ll}
u_{\gamma, 1}(z_1) =  (x_{-\alpha_1}(z_1), x_{-\alpha_1}(z_1)),       &   u_{\gamma, 2}(z_2) =  (e , x_{-\alpha_1}(z_2)\bar{\gs}_1^{-1}),    \\
u_{\gamma, 3}(z_3) =  (x_{-\alpha_2}(z_3), \bar{\gs}_1x_{-\alpha_2}(z_3)\bar{\gs}_1^{-1}),   & u_{\gamma, 4}(z_4) = (\bar{\gs}_1^{-1}x_{\alpha_2}(z_4)\bar{\gs}_1 , x_{\alpha_2}(z_4)),
\end{array}
$$ 
and 
$$
\varphi^{\sigma}(u_{\gamma}(z_1, z_2, z_3, z_4)) = \left( [x_{-\alpha_1}(z_1), x_{-\alpha_2}(z_3)]_{Z_{\bfu}}, [x_{-\alpha_1}(z_1+z_2)\bar{\gs}_1, x_{\alpha_2}(z_4)]_{Z_{\bfv}} \right) \in Z_{\bfu, \bfv}.
$$
For $\eta = (e, e, \gs_2, \gs_2)$ one has 
$$
\begin{array}{ll}
u_{\eta, 1}(z_1) = (x_{-\alpha_1}(z_1), x_{-\alpha_1}(z_1)), & u_{\eta, 2}(z_2) = (x_{\alpha_1}(z_2), x_{\alpha_1}(z_2)),    \\
u_{\eta, 3}(z_3) = (x_{\alpha_2}(z_3)\bar{\gs}_2, e), & u_{\eta, 4}(z_4) = (x_{\alpha_2}(-z_4), x_{-\alpha_2}(z_4)\bar{\gs}_2^{-1}),
\end{array}
$$
and 
$$
\varphi^{\sigma}(u_{\eta}(z_1, z_2, z_3, z_4)) = \left( [x_{-\alpha_1}(z_1), x_{\alpha_2}(z_3)\bar{\gs}_2]_{Z_{\bfu}}, [x_{-\alpha_1}(z_1)x_{\alpha_1}(z_2), x_{-\alpha_2}(z_4)\bar{\gs}_2^{-1}]_{Z_{-\bfv}} \right).
$$
\end{exa}

\begin{defn} \label{J gamma}
For any $\gamma \in \Upsilon_{\sigma(\bfu, \bfv)}$, let 
$$
J(\gamma) = \{ j \in [1,n] \mid  (\gamma^j)^{-\epsilon_j} \alpha_j < 0 \}.
$$
\end{defn}

\begin{lem} \label{key lemma}
Let $\gamma \in \Upsilon_{\sigma(\bfu, \bfv)}$. Let $(z_1, ..., z_n) \in \IC^n$ be such that $z_j = 0$ if $j \in J(\gamma)$. Then for any $j \in [1,n]$, 
$$
q_{\gamma, j}(z_j)^{-1} \cdots q_{\gamma, 1}(z_1)^{-1}p_{\gamma, 1}(z_1) \cdots p_{\gamma, j}(z_j) = \wt{\gamma^j}.
$$
\end{lem}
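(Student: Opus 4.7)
The plan is to prove the identity by induction on $j$, writing $A_j$ for the left-hand side. The base case $j=1$ follows by unravelling the definitions using $\wt{\gamma^0}=e$: each of the four possibilities for $(\epsilon_1,\gamma_1)$ either gives an immediate cancellation $q_{\gamma,1}(z_1)^{-1}p_{\gamma,1}(z_1)=\wt{\gamma^1}$ for all $z_1$, or else reduces to a surviving factor $x_{\pm\alpha_1}(z_1)$ which must equal $e$; a direct check from Definition~\ref{J gamma} shows that the latter situations are exactly those with $1\in J(\gamma)$, so $z_1=0$ by hypothesis and the identity still holds.

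For the inductive step, the induction hypothesis reduces the left-hand side to
\[
A_j \;=\; q_{\gamma,j}(z_j)^{-1}\,\wt{\gamma^{j-1}}\,p_{\gamma,j}(z_j),
\]
and I would split into four cases according to $(\epsilon_j,\gamma_j)$. In each case the relevant one-parameter subgroup element $y=x_{\mp\gamma_j\alpha_j}(z_j)$ (the sign depending on $\epsilon_j$) appears twice, and one checks that the conjugate $\wt{\gamma^{j-1}}^{\pm 1}\,y\,\wt{\gamma^{j-1}}^{\mp 1}$ featured in the definition of $p_{\gamma,j}$ or $q_{\gamma,j}$ lies in the root space for $-(\gamma^j)^{-\epsilon_j}\alpha_j$. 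This reveals the whole mechanism: when $(\gamma^j)^{-\epsilon_j}\alpha_j>0$, i.e.\ $j\notin J(\gamma)$, the conjugate lies in the correct unipotent subgroup, the bracketing $[\,\cdot\,]_\pm$ preserves it, and the $y^{-1}$ manufactured inside $q_{\gamma,j}(z_j)^{-1}$ cancels the $y$ inside $p_{\gamma,j}(z_j)$, collapsing $A_j$ to $\wt{\gamma^{j-1}}\,\bar{\gamma}_j$ or $\bar{\gamma}_j\,\wt{\gamma^{j-1}}$, i.e.\ $\wt{\gamma^j}$, for any value of $z_j$; when $(\gamma^j)^{-\epsilon_j}\alpha_j<0$, i.e.\ $j\in J(\gamma)$, the conjugate lies in the opposite unipotent subgroup and is killed by the bracketing, leaving a residual factor of $y$ that must be $e$, which is guaranteed by the hypothesis $z_j=0$.

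The substantive input is the matching between the sign of $(\gamma^j)^{-\epsilon_j}\alpha_j$, the root of the conjugate, and the unipotent subgroup picked out by $[\,\cdot\,]_\pm$. This reduces to the Gauss decomposition of $N_-HN_+$, the standard fact that $\dot{w}\,x_\beta(z)\,\dot{w}^{-1}$ is a root vector for the root $w\beta$, and the observation $s_{\alpha_j}\alpha_j=-\alpha_j$. The main obstacle is therefore purely combinatorial bookkeeping: with three binary choices ($\epsilon_j$, $\gamma_j$, and the sign of the conjugated root) there are eight subcases in total, and the asymmetric definitions of $p_{\gamma,j}$ and $q_{\gamma,j}$ in (\ref{p_gamma, j(z)})--(\ref{q_gamma, j(z)}) must be aligned with these choices so that the cancellation materialises on the nose in every subcase.
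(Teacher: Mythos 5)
Your proposal is correct and matches the paper's argument: it proves the identity by induction on $j$, splits on $\epsilon_j$, and observes that for $j\notin J(\gamma)$ the conjugate $\wt{\gamma^{j-1}}^{\pm 1}x_{\mp\gamma_j\alpha_j}(\cdot)\wt{\gamma^{j-1}}^{\mp 1}$ lies in the expected unipotent radical so the bracketing is the identity and the factors cancel on the nose, while for $j\in J(\gamma)$ the hypothesis $z_j=0$ trivializes everything. The paper dispatches the inductive step with only four cases ($\epsilon_j$ combined with $j\in J(\gamma)$ or not), not the eight you anticipate, since the formula for $q_{\gamma,j}(z_j)^{-1}\wt{\gamma^{j-1}}p_{\gamma,j}(z_j)$ works uniformly in $\gamma_j$ and only the sign of $(\gamma^j)^{-\epsilon_j}\alpha_j$ governs the outcome.
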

\begin{proof}
Let $z = (z_1, ..., z_n) \in \IC^n$ with $z_j = 0$ if $j \in J(\gamma)$. Denote by $F_{\gamma, j}(z)$ the product on the left hand side, and set $F_{\gamma, 0} = e \in G$. Thus $F_{\gamma, 0}(z) = \wt{\gamma^0}$. So assume now that $j \in [1,n]$ and that $F_{\gamma, j-1}(z) = \wt{\gamma^{j-1}}$. \\
\textbf{Case $\epsilon_j = -1$}: Using the induction hypothesis, one has 
\begin{align*}
F_{\gamma, j}(z)& = q_{\gamma, j}(z_j)^{-1} F_{\gamma, j-1}(z) p_{\gamma, j}(z_j)     \\
 & = [\wt{\gamma^{j-1}}x_{-\gamma_j \alpha_j}(-z_j) \wt{\gamma^{j-1}}^{-1}]_- \wt{\gamma^{j-1}} x_{-\gamma_j \alpha_j}(z_j) \bar{\gamma}_j
\end{align*}
If $j \notin J(\gamma)$, then $\gamma^j \alpha_j = \gamma^{j-1} \gamma_j \alpha_j > 0$, and so $F_{\gamma, j}(z) = \wt{\gamma^j}$. If $j \in J(\gamma)$, then $z_j = 0$, so $F_{\gamma, j}(z) = \wt{\gamma^j}$.   \\
\textbf{Case $\epsilon_j = 1$}: One has 
\begin{align*}
F_{\gamma, j}(z)& = q_{\gamma, j}(z_j)^{-1} F_{\gamma, j-1}(z) p_{\gamma, j}(z_j)    \\
 & = \bar{\gamma}_j x_{\gamma_j \alpha_j}(-z_j)\wt{\gamma^{j-1}} [\wt{\gamma^{j-1}}^{-1} x_{\gamma_j \alpha_j}(z) \wt{\gamma^{j-1}}]_+
\end{align*}
If $j \notin J(\gamma)$, then $(\gamma^j)^{-1} \alpha_j = (\gamma^{j-1})^{-1} \gamma_j \alpha_j > 0$, and so $F_{\gamma, j}(z) = \wt{\gamma^j}$. If $j \in J(\gamma)$, then $z_j = 0$, so $F_{\gamma, j}(z) = \wt{\gamma^j}$. 
\end{proof}

The following Proposition \ref{C^gamma = tildeC^gamma} shows that each $C^{\gamma}$ in the combinatorially defined decomposition of $Z^{\sigma}_{\bfu, \bfv}$ in (\ref{disjoint unions}) is isomorphic to an affine space.

\begin{pro} \label{C^gamma = tildeC^gamma}
For any $\gamma \in \Upsilon_{\sigma(\bfu, \bfv)}$, one has 
$$
C^{\gamma} = \{u_{\gamma}(z) \in \calO^{\gamma} \mid z_j = 0, \text{ if } j \in J(\gamma) \}. 
$$ 
\end{pro}
\begin{proof}
Recall that $C^{\gamma}$ has been introduced in Definition \ref{C^gamma1}. Denote by $\tilde{C}^{\gamma}$ the set on the right hand side. Recall that an element 
$$
[(g_1,h_1), ..., (g_n,h_n)]_{DF_n}
$$
lies in $C^{\gamma}$ if and only if for every $j \in [1,n]$, $(g_1\cdots g_j.B, h_1\cdots h_j.B_-) \in G_{\diag}(\gamma^j)$, which is equivalent to 
\begin{equation} \label{condition for C^gamma}
(h_1 \cdots h_j)^{-1}g_1\cdots g_j \in B_-\gamma^jB, \;\; j \in [1,n]. 
\end{equation}
By Lemma \ref{key lemma}, one has $\tilde{C}^{\gamma} \subset C^{\gamma}$, so we need to prove 
\begin{equation} \label{C^gamma subset tildeC^gamma}
C^{\gamma} \subset \tilde{C}^{\gamma}.
\end{equation}
For $k \in [1,n]$, let 
$$
Z_k = P^{\sigma}_1 \times_{B \times B_-} \cdots  \times_{B \times B_-}  P^{\sigma}_k/(B \times B_-) \subset DF_k.
$$
Define the following two subsets of $Z_k$. 
\begin{align*}
C^{(\gamma_1, ..., \gamma_k)} & = \{[(g_1,h_1), ..., (g_k,h_k)]_{DF_k} \in Z_k \mid \text{(\ref{condition for C^gamma}) holds for } j \in [1,k] \}    \\
\tilde{C}^{(\gamma_1, ..., \gamma_k)} & = \{ [u_{\gamma,1}(z_1), ..., u_{\gamma, k}(z_k)]_{DF_k} \in Z_k \mid z_i = 0, \text{ if } i \in [1,k] \cap J(\gamma) \}. 
\end{align*}
Fix $x = [(g_1,h_1), ..., (g_n,h_n)]_{DF_n} \in C^{\gamma}$, i.e $x$ satisfies (\ref{condition for C^gamma}). Write $x_k = \rho_{k,n}(x)$ for $k \in [1,n-1]$ and set $x_n = x$. Then $x_k \in C^{(\gamma_1, ..., \gamma_k)}$ for every $k \in [1,n]$. We show by induction on $k$ that $x_k \in \tilde{C}^{(\gamma_1, ..., \gamma_k)}$ for every $k \in [1,n]$. Then (\ref{C^gamma subset tildeC^gamma}) is the statement for $k = n$. \\
\indent 
Assume that $\epsilon_1 = -1$, the case $\epsilon_1 = 1$ being similar. So $h_1 \in B_-$ and $g_1 \in P_{\delta_1}$. Then $g_1 \in B_- \gamma_1B$. Hence one can write $g_1 = x_{-\gamma_1 \alpha_1}(z_1)\bar{\gamma}_1b_1$ with $b_1 \in B$ and $z_1 = 0$ if $\gamma_1 = \delta_1$, that is if $1 \in J(\gamma)$. Thus 
$$
(g_1.B, h_1.B_-) = (x_{-\gamma_1 \alpha_1}(z_1)\bar{\gamma}_1.B, [x_{-\gamma_1 \alpha_1}(z_1)]_-.B_-) \in \tilde{C}^{(\gamma_1)}. 
$$
Let now $k \in [1, n-1]$, and assume that $C^{(\gamma_1, ..., \gamma_k)} \subset \tilde{C}^{(\gamma_1, ..., \gamma_k)}$. Once again the two cases of $\epsilon_{k+1} = 1$ or $\epsilon_{k+1} = -1$ can be treated similarly, so we can assume that $\epsilon_{k+1} = -1$. So $\gamma^{k+1} = \gamma^k \gamma_{k+1}$. By the induction hypothesis, $x_k \in \tilde{C}^{(\gamma_1, ..., \gamma_k)}$, thus there exist $g_{k+1}' \in P_{\delta_{k+1}}$, $h_{k+1}' \in B_-$ and $(z_1, ..., z_k) \in \IC^k$ with $z_j = 0$ if $j \in [1,k] \cap J(\gamma)$, such that 
$$
x_{k+1} = [u_{\gamma,1}(z_1), ..., u_{\gamma, k}(z_k), (g_{k+1}' , h_{k+1}' )]_{DF_{k+1}}. 
$$
Since $x_{k+1} \in C^{(\gamma_1, ..., \gamma_k)}$, we have 
$$
h_{k+1}'^{-1}q_{\gamma, k}(z_k)^{-1} \cdots q_{\gamma, 1}(z_1)^{-1}p_{\gamma, 1}(z_1) \cdots p_{\gamma, k}(z_k)g_{k+1}'  = h_{k+1}'^{-1}\wt{\gamma^k} g_{k+1}' \in B_-\gamma^{k+1}B. 
$$
Hence $\wt{\gamma^k}g_{k+1}' \in B_-\gamma^{k+1}B$. If $\gamma_{k+1} = \delta_{k+1}$, then $\gamma^{k+1} = \gamma^k \delta_{k+1} \neq \gamma^k$ and one cannot have $g_{k+1}' \in B$. Similarly, if $\gamma_{k+1} = e$, then one cannot have $g_{k+1}' \in \delta_{k+1}B$. Thus $g_{k+1}'$ can be written as $x_{-\gamma_{k+1} \alpha_{k+1}}(z_{k+1})\bar{\gamma}_{k+1}b_{k+1}$ for some $z_{k+1} \in \IC$ and $b_{k+1} \in B$. Furthermore, if $k+1 \in J(\gamma)$, that is if $\gamma^k \gamma_{k+1}\alpha_{k+1} < 0$, then one must have $z_{k+1} = 0$. If $k+1 \notin J(\gamma)$, then $\wt{\gamma^k}g_{k+1}' \in B_-\gamma^{k+1}B$ for any value of $z_{k+1}$. Moreover one can write $h_{k+1}' = [\wt{\gamma^k}x_{-\gamma_{k+1} \alpha_{k+1}}(z_{k+1}) \wt{\gamma^k}^{-1}]_-b_{-(k+1)}$, for some $b_{-(k+1)} \in B_-$. Hence 
$$
x_{k+1} = [u_{\gamma,1}(z_1), ..., u_{\gamma, k}(z_k), u_{\gamma, k+1}(z_{k+1})]_{Z_{k+1}} \in \tilde{C}^{(\gamma_1, ..., \gamma_{k+1})}. 
$$
\end{proof}

\begin{exa}
Let $\bfu, \bfv, \sigma, \gamma$, and $\eta$ be as in Example \ref{example3}. Then $J(\gamma) = \{2\}$ and $J(\eta) = \{3\}$. So 
$$
C^{\gamma} = \{u_{\gamma}(z_1, 0, z_3, z_4) \mid z_i \in \IC\} \text{ and } C^{\eta} = \{u_{\eta}(z_1, z_2, 0, z_4) \mid z_i \in \IC\}. 
$$
\end{exa}

\subsection{The $H$-action on $\calO^{\gamma}$ in the $z$-coordinates}

Fix again $(\bfu, \bfv, \sigma)$ as in Definition \ref{shuffled subexpressions}. Let $H$ act diagonally on $Z_{\bfu, \bfv}$. This action corresponds, before applying $\varphi^{\sigma}$, to the action on $Z^{\sigma}_{\bfu, \bfv}$ given by 
$$
h \cdot [(g_1, h_1), ...(g_n, h_n)]_{Z^{\sigma}_{\bfu, \bfv}} = [(hg_1, hh_1), (g_2, h_2), ...(g_n, h_n)]_{Z^{\sigma}_{\bfu, \bfv}}, \; h \in H, (g_i, h_i) \in P^{\sigma}_i. 
$$
Let $\gamma \in \Upsilon_{\sigma(\bfu, \bfv)}$. We show in this section that $\calO^{\gamma}$ is invariant under the $H$-action, and compute the action of $H$ in the coordinates $(z_1, ..., z_n)$. For any $j \in [1,n]$, one can write $\gamma^j = (\gamma_{\bfv}^j)^{-1}\gamma_{\bfu}^j$, with
\begin{equation}  \label{ga bfu and ga bfv}
\gamma_{\bfv}^j = \prod_{1 \leqslant k \leqslant j, \epsilon_k =1} \gamma_k, \;\;\; \gamma_{\bfu}^j =  \prod_{1 \leqslant k \leqslant j, \epsilon_k =-1} \gamma_k, 
\end{equation}
where the index in both products is increasing. Set $\gamma_{\bfu}^0 = \gamma_{\bfv}^0 = e$. 
In particular, one can write $\gamma^n = \gamma_{\bfv}^{-1} \gamma_{\bfu}$, with $\gamma_{\bfv} = \gamma_{\bfv}^n$, $\gamma_{\bfu} = \gamma_{\bfu}^n$.

\begin{pro} \label{haction}
Let $\gamma \in \Upsilon_{\sigma(\bfu, \bfv)}$. Then $\calO^{\gamma}$ is invariant under $H$, and each coordinate function $z_j$, $j \in [1,n]$, is a weight function for $H$, with
$$
h \cdot z_j = \left\{ \begin{array}{ll}
h^{- \gamma_{\bfu}^j \alpha_j} z_j, & \epsilon_j = -1    \\
h^{ \gamma_{\bfv}^j \alpha_j} z_j, & \epsilon_j = 1. 
\end{array} \right. 
$$
\end{pro}
\begin{proof}
Let $h_1, h_2 \in H$. Let $\alpha \in \Gamma$, and $\delta \in \{e, s_{\alpha} \}$. For any $\dot{w} \in N_G(H)$ and $z \in \IC$, one has 
\begin{align}
(h_1, h_2)(x_{-\delta \alpha}(z)\bar{\delta}, [\dot{w}x_{-\delta \alpha}(z) \dot{w}^{-1}]_-) & =  \notag  \\
   &  (x_{-\delta \alpha}(h_1^{-\delta \alpha}z) \bar{\delta}, [\dot{w}x_{-\delta \alpha}(h_2^{-w\delta \alpha}z) \dot{w}^{-1}]_-)(h_1^{\delta}, h_2) \label{brother1}  \\
(h_1, h_2)([\dot{w}^{-1}x_{\delta \alpha}(z) \dot{w}]_+, x_{\delta \alpha}(z)\bar{\delta}^{-1}) & = \notag  \\
   &  ([\dot{w}^{-1}x_{\delta \alpha}(h_1^{w^{-1}\delta \alpha}z) \dot{w}]_+, x_{\delta \alpha}(h_2^{\delta \alpha}z) \bar{\delta}^{-1})(h_1, h_2^{\delta})  \label{brother2}   
\end{align}
Let $h \in H$ and $j \in [1,n]$. If $\epsilon_j = -1$, using (\ref{brother1}), one gets 
\begin{align*}
(h^{\gamma_{\bfu}^{j-1}}, h^{\gamma_{\bfv}^{j-1}})(p_{\gamma, j}(z_j), q_{\gamma, j}(z_j)) & = (p_{\gamma, j}(h^{-\gamma_{\bfu}^j \alpha_j}z_j), q_{\gamma, j}(h^{-\gamma_{\bfv}^{j-1} \gamma^{j-1} \gamma_j \alpha_j}z_j))(h^{\gamma_{\bfu}^j}, h^{\gamma_{\bfv}^{j-1}})   \\
  & = (p_{\gamma, j}(h^{-\gamma_{\bfu}^j \alpha_j}z_j), q_{\gamma, j}(h^{-\gamma_{\bfu}^j \alpha_j}z_j))(h^{\gamma_{\bfu}^j}, h^{\gamma_{\bfv}^j}). 
\end{align*}
The same can be proved using (\ref{brother2}) if $\epsilon_j = 1$. Thus an induction on $j$ yields the result. 
\end{proof}

\begin{defn} \label{weight}
We denote by $\nu_{ \gamma, j}$ the weight of the coordinate $z_j$ for the action of $H$. That is 
$$
\nu_{\gamma, j} =  \left\{ \begin{array}{ll}
- \gamma_{\bfu}^j \alpha_j, & \text{ if } \epsilon_j = -1    \\
\gamma_{\bfv}^j \alpha_j, & \text{ if } \epsilon_j = 1. 
\end{array} \right. 
$$
\end{defn}

\subsection{Positive and distinguished $\sigma$-shuffled subexpressions}

We now prove a few facts on the combinatorics of shuffled subexpressions that will be useful later. In particular, we introduce positive and distinguished shuffled subexpressions. \\
\indent
Fix $(\bfu, \bfv, \sigma)$ as in Definition \ref{sigma bfu bfv}, and let $\sigma(\bfu, \bfv) = (\delta_1, ..., \delta_n)$ be as in (\ref{delta}). 

\begin{defn} \label{defn of positivity}
Let $\gamma \in \Upsilon_{\bfu}$. We say that $\gamma$ is \textit{positive}, if 
$$
\gamma_1 \cdots \gamma_{j-1} = (\gamma_1 \cdots \gamma_j) \lhd s_j,
$$
for any $j \in [1,l]$. We write $\Upsilon_{\bfu}^+$ for the set of all positive subexpressions of $\bfu$. We say that $\gamma \in \Upsilon_{\sigma(\bfu, \bfv)}$ is \textit{$\sigma$-positive}, or that $\gamma$ is a \textit{positive $\sigma$-shuffled subexpression of $(\bfu, \bfv)$}, if for any $j \in [1,n]$, 
$$
\gamma^{j-1} = \left\{ \begin{array}{ll}
\gamma^j \lhd \delta_j, & \text{ if } \epsilon_j = -1   \\
\delta_j \rhd \gamma^j, & \text{ if } \epsilon_j = 1.  
\end{array} \right.
$$
Denote by $\Upsilon_{\bfu, \bfv, \sigma}^+$ the set of all positive $\sigma$-shuffled subexpressions of $(\bfu, \bfv)$.
\end{defn}

\begin{rem}
Recall that $\gamma^j \lhd \delta_j = \min \{\gamma^j, \gamma^j\delta_j \}$, and $\delta_j \rhd \gamma^j = \min \{\gamma^j, \delta_j \gamma^j\}$. So one sees that $\gamma$ is positive if and only if for every $j \in [1,n]$, 
$$
\left\{ \begin{array}{ll}
\gamma^{j-1} < \gamma^{j-1} \delta_j, & \text{ when } \epsilon_j = -1   \\ 
\gamma^{j-1} < \delta_j  \gamma^{j-1}, &  \text{ when } \epsilon_j = 1.
\end{array} \right.
$$
\end{rem}

The notion of positivity of subexpressions has previously been studied in \cite{MR, WY}. Our definition of positivity for shuffled subexpressions is a generalization of the notion of ``positive double subexpression" found in \cite{WY}. Indeed, if $\bfu$ and $\bfv$ are reduced, the two notions coincide. See Section 3.8 for more details.

\begin{exa}
In the root system $A_4$, let 
$$
\bfu = (\gs_3,\gs_2,\gs_3, \gs_2, \gs_3), \;\; \bfv = (\gs_4,\gs_2,\gs_1,\gs_4),
$$
and $\sigma$ be the $(5, 9)$-shuffle corresponding to 
$$
\begin{array}{lllllllllll}
\epsilon(\sigma) & = &(-1,         & -1,       & 1,         & -1,       & 1,         & 1,         & -1,       & -1,        & 1),
\end{array}
$$
so that 
$$
\begin{array}{lllllllllll}
\sigma(\bfu, \bfv) & = & (\gs_3, & \gs_2, & \gs_4, & \gs_3, & \gs_2, & \gs_1, & \gs_2, & \gs_3, & \gs_4).
\end{array}
$$
Consider the $\sigma$-shuffled subexpressions
$$
\gamma_1  = (e,e,e,\gs_3, e, e, \gs_2, \gs_3, \gs_4) \text{ and } \gamma_2  = (e,e,\gs_4,\gs_3,\gs_2,\gs_1,\gs_2,\gs_3,e).
$$
Then $\gamma_1$ is positive, but $\gamma_2$ is not. 
\end{exa}

Let 
\begin{equation} \label{ast prod uv}
u = s_1 \ast \cdots \ast s_l, \;\;\; v = s_{l+1} \ast \cdots \ast s_n. 
\end{equation}
We wish to show that elements $w \in W$ with $w \leqslant v^{-1} \ast u$ are in 1-1 correspondance with positive $\sigma$-shuffled subexpressions.

\begin{defn}
For $\gamma \in \Upsilon_{\sigma(\bfu, \bfv)}$, let 
\begin{align*}
\gamma_{[1,l]} & = (\gamma_{\sigma(1)}, ..., \gamma_{\sigma(l)}) \in \Upsilon_{\bfu},   \\
\gamma_{[l+1,n]} & = (\gamma_{\sigma(l+1)}, ..., \gamma_{\sigma(n)}) \in \Upsilon_{\bfv}.  
\end{align*}
\end{defn}

\begin{lem} \label{double pos sub}
For any $w \in W$ with $w \leqslant v^{-1} \ast u$, there is a unique $\gamma \in \Upsilon_{\bfu, \bfv, \sigma}^+$ such that $\gamma^n = w$. Moreover $\gamma$ has the following properties. \\
1) For any $j \in [1,n]$, $l(\gamma^j) = l(\gamma_{\bfv}^j) + l(\gamma_{\bfu}^j)$;    \\
2) $\gamma_{[1,l]}$ is a positive subexpression of $\bfu$, and $\gamma_{[l+1,n]}$ is a positive subexpression of $\bfv$. 
\end{lem}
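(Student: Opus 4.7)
The plan is to construct $\gamma$ by downward recursion on $j$, starting from $\gamma^n := w$. Uniqueness is immediate from Definition~\ref{defn of positivity}: given $\gamma^j$, positivity forces $\gamma^{j-1} = \gamma^j \lhd \delta_j$ if $\epsilon_j = -1$ and $\gamma^{j-1} = \delta_j \rhd \gamma^j$ if $\epsilon_j = 1$, after which $\gamma_j \in \{e, \delta_j\}$ is determined by $\gamma^j = \gamma^{j-1}\gamma_j$ (respectively $\gamma^j = \gamma_j \gamma^{j-1}$). Existence then amounts to showing that this recursion terminates at $\gamma^0 = e$, which is the main obstacle of the proof.

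For the existence, for $j \in [0, n]$ let $U_j$ (resp.~$V_j$) denote the $\ast$-product of the $\delta_k$ with $k \in [1,j]$ and $\epsilon_k = -1$ (resp.~$\epsilon_k = 1$), taken in increasing order of $k$, so that $U_n = u$ and $V_n = v$. I prove by downward induction on $j$ that $\gamma^j \leqslant V_j^{-1} \ast U_j$, the base case $j = n$ being the hypothesis $w \leqslant v^{-1} \ast u$. The key ingredient is that for any simple reflection $s$ and any $x, y \in W$,
$$
x \lhd s \leqslant y \iff x \leqslant y \ast s, \qquad s \rhd x \leqslant y \iff x \leqslant s \ast y,
$$
which follows from the lifting property of the Chevalley--Bruhat order (see \cite{HL}). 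When $\epsilon_j = -1$, one has $V_j^{-1} \ast U_j = (V_{j-1}^{-1} \ast U_{j-1}) \ast \delta_j$, and applying the displayed equivalence with $s = \delta_j$ and $y = V_{j-1}^{-1} \ast U_{j-1}$ yields $\gamma^{j-1} = \gamma^j \lhd \delta_j \leqslant V_{j-1}^{-1} \ast U_{j-1}$. The case $\epsilon_j = 1$ is analogous, using $V_j^{-1} \ast U_j = \delta_j \ast (V_{j-1}^{-1} \ast U_{j-1})$. At $j = 0$ one obtains $\gamma^0 \leqslant e$, hence $\gamma^0 = e$.

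For property (1), I induct upward on $j$. The step is trivial when $\gamma_j = e$. When $\gamma_j = \delta_j$ and $\epsilon_j = -1$, positivity gives $l(\gamma^j) = l(\gamma^{j-1}) + 1$; were $\gamma_{\bfu}^{j-1}\delta_j < \gamma_{\bfu}^{j-1}$, subadditivity of length combined with the inductive hypothesis would force $l(\gamma^j) \leqslant l(\gamma_{\bfv}^{j-1}) + l(\gamma_{\bfu}^{j-1}) - 1 = l(\gamma^{j-1}) - 1$, a contradiction, so $l(\gamma_{\bfu}^j) = l(\gamma_{\bfu}^{j-1}) + 1$. The case $\epsilon_j = 1$ is symmetric. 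Property (2) then follows from (1): positivity of $\gamma_{[1,l]}$ at step $k$ reads $\gamma_{\bfu}^{\sigma(k-1)} = \gamma_{\bfu}^{\sigma(k)} \lhd \delta_{\sigma(k)}$, and splitting into the cases $\gamma_{\sigma(k)} = e$ and $\gamma_{\sigma(k)} = \delta_{\sigma(k)}$, this reduces via (1) to length inequalities that are direct consequences of the positivity of $\gamma$ at position $\sigma(k)$; the argument for $\gamma_{[l+1, n]}$ is analogous.
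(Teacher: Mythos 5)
Your proposal is correct and follows the paper's overall strategy (construct $\gamma$ by the forced downward recursion, show $\gamma^0 = e$, then prove (1) and (2) by upward induction on length). The one place where you genuinely diverge from the paper is in verifying that the recursion terminates at $\gamma^0 = e$: the paper rewrites $w_0 = v \rhd w \lhd u^{-1}$, using that $\lhd$ and $\rhd$ are commuting monoidal actions of $(W,\ast)$, and then invokes \cite[Lemma A.3]{HL} as a black box. You instead prove the needed fact by a self-contained downward induction on the invariant $\gamma^j \leqslant V_j^{-1}\ast U_j$, the inductive step resting on the adjunction $x \lhd s \leqslant y \iff x \leqslant y \ast s$ (and its left analogue), which is a standard consequence of Deodhar's lifting property; you also use the anti-homomorphism property $(a\ast b)^{-1} = b^{-1}\ast a^{-1}$ of the Demazure product, which is correct. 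This buys you a proof that does not depend on the specific statement of \cite[Lemma A.3]{HL}, at the cost of unpacking what is essentially the content of that lemma. Your proofs of (1) and (2) are the same length-additivity arguments as the paper's: for (1), the subadditivity contradiction you spell out is precisely what justifies the paper's terse assertion that $\gamma_{\bfu}^j = \gamma_{\bfu}^{j-1}\delta_j > \gamma_{\bfu}^{j-1}$; for (2), your reduction "via (1) to length inequalities" is terse but correct, and matches the paper's argument (which, when $\gamma_j = e$, also needs the subadditivity estimate $l(\gamma^{j-1}\delta_j) \leqslant l(\gamma_{\bfv}^{j-1}) + l(\gamma_{\bfu}^{j-1}\delta_j)$ combined with (1) to deduce $\gamma_{\bfu}^{j-1}\delta_j > \gamma_{\bfu}^{j-1}$).
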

\begin{proof}
Let $w \in W$ such that $w \leqslant v^{-1} \ast u$. Set $w_n = w$, and for any $j \in [1,n]$, let 
$$
w_{j-1} = \left\{ \begin{array}{ll}
w_j \lhd \delta_j, & \text{ if } \epsilon_j = -1   \\
\delta_j \rhd w_j, & \text{ if } \epsilon_j = 1.
\end{array} \right.
$$
Then, by \cite[Lemma A.3]{HL}, one has $w_0 = v \rhd w \lhd u^{-1} = e$. Let $\gamma_j = w_{j-1}^{\epsilon_j}w_j^{-\epsilon_j}$. Then $\gamma = (\gamma_1, ..., \gamma_n)$ is a positive $\sigma$-shuffled subexpression, and is clearly the unique one satisfying $\gamma^n = w$. We now prove that 
\begin{equation} \label{length add up}
l(\gamma^j) = l(\gamma_{\bfv}^j) + l(\gamma_{\bfu}^j), \;\;\, j \in [1,n],
\end{equation}
for any $\gamma \in \Upsilon_{\bfu, \bfv, \sigma}^+$. 
One clearly has $l(\gamma^1) = l(\gamma_{\bfv}^1) + l(\gamma_{\bfu}^1)$. Thus suppose $j \geqslant 2$, and that $l(\gamma^{j-1}) = l(\gamma_{\bfv}^{j-1}) + l(\gamma_{\bfu}^{j-1})$. If $\gamma_j = e$, then (\ref{length add up}) holds. So assume $\gamma_j = \delta_j$. Suppose that $\epsilon_j = -1$, the other case being similar. Then $\gamma^j = \gamma^{j-1}\delta_j >  \gamma^{j-1}$ and one has $\gamma_{\bfu}^j = \gamma_{\bfu}^{j-1} \delta_j > \gamma_{\bfu}^{j-1}$. Thus 
$$
l(\gamma^j) =  l(\gamma_{\bfv}^{j-1}) + l(\gamma_{\bfu}^{j-1}) +1 =   l(\gamma_{\bfv}^j) + l(\gamma_{\bfu}^j). 
$$ 
This proves 1). Let now $j \in [1,n]$, and suppose that $\epsilon_j = 1$. One has $\delta_j \gamma^{j-1} > \gamma^{j-1}$. Since $l(\gamma^{j-1}) = l(\gamma_{\bfv}^{j-1}) + l(\gamma_{\bfu}^{j-1})$, this implies that $\delta_j (\gamma_{\bfv}^{j-1})^{-1} > (\gamma_{\bfv}^{j-1})^{-1}$. Or in other words, $(\gamma_{\bfv}^{j-1})^{-1} =  \delta_j  \rhd (\gamma_{\bfv}^j)^{-1}$, which is equivalent, by \cite[Lemma A3]{HL}, to 
$$
\gamma_{\bfv}^j \lhd \delta_j = \gamma_{\bfv}^{j-1}. 
$$
This shows that $(\gamma_{\sigma(l+1)}, ..., \gamma_{\sigma(n)})$ is a positive subexpression of $\bfv$. If $\epsilon_j = -1$, a similar computation shows that $(\gamma_{\sigma(1)}, ..., \gamma_{\sigma(l)})$ is a positive subexpression of $\bfu$.
\end{proof}

\begin{rem} \label{gamma tilde = gamma bar}
Recall the Weyl group representatives $\wt{\gamma^j}$ from (\ref{gamma tilde}). If $\gamma$ is positive, it follows from Lemma \ref{double pos sub} that $\wt{\gamma^j} = \overline{\gamma^j}$, for all $j \in [1,n]$.   
\end{rem}

\begin{defn}
If $w \leqslant u^{-1} \ast v$, we denote by $\gamma_w \in \Upsilon_{\bfu, \bfv, \sigma}^+$ the unique positive $\sigma$-shuffled subexpression satisfying $\gamma_w^n = w$. 
\end{defn}

On the other hand, for any $\gamma \in \Upsilon_{\sigma(\bfu, \bfv)}$, one has $\gamma^n = \gamma_{\bfv}^{-1} \gamma_{\bfu} \leqslant v^{-1} \ast u$. Thus we have shown the following. 

\begin{cor}
The map 
$$
\{w \in W \mid w \leqslant v^{-1} \ast u \} \rightarrow \Upsilon_{\bfu, \bfv, \sigma}^+, \;\; w \mapsto \gamma_w
$$
is a bijection. The inverse is given by $\gamma \mapsto \gamma^n$. 
\end{cor}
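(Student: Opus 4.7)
The corollary is essentially a bookkeeping consequence of Lemma \ref{double pos sub} together with the observation displayed immediately before the statement, so my plan is simply to verify that $\gamma \mapsto \gamma^n$ is a two-sided inverse to $w \mapsto \gamma_w$.

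First I would check that $\gamma \mapsto \gamma^n$ actually maps $\Upsilon_{\bfu,\bfv,\sigma}^+$ into $\{w \in W : w \leqslant v^{-1} \ast u\}$. Using the decomposition $\gamma^n = \gamma_{\bfv}^{-1}\gamma_{\bfu}$ from (\ref{ga bfu and ga bfv}), the element $\gamma_{\bfu}$ is the ordered product of a subsequence of the simple reflections in $\bfu$, hence $\gamma_{\bfu} \leqslant s_1 \ast \cdots \ast s_l = u$, and similarly $\gamma_{\bfv} \leqslant v$. Combining this with the standard Bruhat-order compatibilities of the monoidal actions $\lhd$ and $\rhd$ recalled from \cite{HL} yields $\gamma_{\bfv}^{-1}\gamma_{\bfu} \leqslant v^{-1} \ast u$, which is exactly what the paragraph preceding the corollary asserts.

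For the two compositions, Lemma \ref{double pos sub} directly supplies, for each $w \leqslant v^{-1} \ast u$, a positive $\sigma$-shuffled subexpression $\gamma_w$ satisfying $\gamma_w^n = w$, so $w \mapsto \gamma_w \mapsto \gamma_w^n = w$ is the identity on $\{w \leqslant v^{-1}\ast u\}$. Conversely, for any $\gamma \in \Upsilon_{\bfu,\bfv,\sigma}^+$, both $\gamma$ and $\gamma_{\gamma^n}$ are positive $\sigma$-shuffled subexpressions whose terminal Weyl group element equals $\gamma^n$, so the uniqueness clause of Lemma \ref{double pos sub} forces $\gamma_{\gamma^n} = \gamma$.

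I do not expect any real obstacle: all the substantive work has already been done in Lemma \ref{double pos sub}, and what remains is a clean application of its existence and uniqueness statements together with the elementary Bruhat-order inequality above. The only place requiring mild care is the case-splitting on $\epsilon_j = \pm 1$ when translating between the $\lhd$-recursion and the $\rhd$-recursion defining positivity.
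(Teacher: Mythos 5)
Your proposal is correct and follows exactly the route the paper (implicitly) intends: the existence and uniqueness in Lemma \ref{double pos sub} give that $w \mapsto \gamma_w$ and $\gamma \mapsto \gamma^n$ are two-sided inverses, and the remark that $\gamma^n = \gamma_{\bfv}^{-1}\gamma_{\bfu} \leqslant v^{-1}\ast u$ (stated without proof in the paper, and which you justify by a subword/monoid-product argument) shows the inverse map lands in the correct set. The closing comment about case-splitting on $\epsilon_j$ is unnecessary since Lemma \ref{double pos sub} already absorbs that work, but it is harmless.
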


\begin{lem} \label{dim of C^gamma}
For any $\gamma \in \Upsilon_{\sigma(\bfu, \bfv)}$, one has $l(\gamma^n) \leqslant |J(\gamma)|$, with equality if and only if $\gamma$ is positive.
\end{lem}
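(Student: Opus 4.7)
The plan is to establish the inequality and the positivity characterization simultaneously by a step-by-step comparison, telescoping $l(\gamma^j) - l(\gamma^{j-1})$ against the characteristic function $\chi_j$ of $J(\gamma)$. The key observation will be that at each step $j$, one always has $l(\gamma^j) - l(\gamma^{j-1}) \leqslant \chi_j$, with equality precisely when the positivity condition holds at step $j$.

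First I would compute $(\gamma^j)^{-\epsilon_j}\alpha_j$ in terms of $\gamma^{j-1}$. When $\gamma_j = e$, one has $\gamma^j = \gamma^{j-1}$ so $(\gamma^j)^{-\epsilon_j}\alpha_j = (\gamma^{j-1})^{-\epsilon_j}\alpha_j$. When $\gamma_j = \delta_j$, using $\delta_j\alpha_j = -\alpha_j$, a direct computation gives $(\gamma^j)^{-\epsilon_j}\alpha_j = -(\gamma^{j-1})^{-\epsilon_j}\alpha_j$. Translating negativity of a root into the Chevalley--Bruhat order, this identifies whether $j \in J(\gamma)$ in terms of whether $\gamma^{j-1} < \gamma^{j-1}\delta_j$ (case $\epsilon_j = -1$) or $\gamma^{j-1} < \delta_j\gamma^{j-1}$ (case $\epsilon_j = 1$)---that is, the positivity condition of Definition \ref{defn of positivity}---together with whether $\gamma_j = e$ or $\gamma_j = \delta_j$.

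I would then run a four-case analysis, writing $d_j = l(\gamma^j) - l(\gamma^{j-1}) \in \{-1,0,1\}$ and $\chi_j = \mathbf{1}_{j \in J(\gamma)}$, and tabulating:
\begin{itemize}
\item positivity holds and $\gamma_j = e$: $d_j = 0$ and $j \notin J(\gamma)$, so $d_j = \chi_j = 0$;
\item positivity holds and $\gamma_j = \delta_j$: $d_j = 1$ and $j \in J(\gamma)$, so $d_j = \chi_j = 1$;
\item positivity fails and $\gamma_j = e$: $d_j = 0$ but $j \in J(\gamma)$, so $d_j < \chi_j$;
\item positivity fails and $\gamma_j = \delta_j$: $d_j = -1$ and $j \notin J(\gamma)$, so $d_j < \chi_j$.
\end{itemize}
In every case $d_j \leqslant \chi_j$, with equality if and only if positivity holds at step $j$.

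Summing from $j=1$ to $n$ telescopes to
$$
l(\gamma^n) \;=\; \sum_{j=1}^n d_j \;\leqslant\; \sum_{j=1}^n \chi_j \;=\; |J(\gamma)|,
$$
with equality if and only if equality holds at every step, i.e.\ if and only if $\gamma$ is positive. The main (and only mild) obstacle is keeping the sign conventions straight in the four cases; once the identity $(\gamma^j)^{-\epsilon_j}\alpha_j = \pm (\gamma^{j-1})^{-\epsilon_j}\alpha_j$ is set up correctly, the rest is bookkeeping.
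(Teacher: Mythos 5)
Your proposal is correct and is essentially the paper's own argument: the paper proves the same four-case comparison (sign of $(\gamma^{j-1})^{-\epsilon_j}\alpha_j$ versus $\gamma_j = e$ or $\delta_j$) at the last step and then inducts on $n$, which is just your telescoping sum packaged as an induction. Your sign bookkeeping, including the identity $(\gamma^j)^{-\epsilon_j}\alpha_j = -(\gamma^{j-1})^{-\epsilon_j}\alpha_j$ when $\gamma_j = \delta_j$, checks out.
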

\begin{proof}
We proceed by induction. The result is obvious when $n=1$, so let $n \geqslant 2$. We assume that $\epsilon_n = 1$, the case $\epsilon_n = -1$ being treated similarly. Then one has $\sigma(n) = n$. 
Let $\bfv' = (s_{l+1}, ..., s_{n-1})$, and $\gamma' = (\gamma_1, ..., \gamma_{n-1})$. Then $\gamma' \in \Upsilon_{\sigma'(\bfu, \bfv')}$, where $\sigma'(j) = \sigma(j)$, for all $j \in [1,n-1]$. By the induction hypothesis, the result holds for $\gamma'$. Recall that we denote by $\alpha_n$ the simple root such that $\delta_n = s_{\alpha_n}$. \\
\textbf{Case 1:} $(\gamma^n)^{-1} \alpha_n > 0$ and $\gamma_n = e$. \\
Then $n \notin J(\gamma)$, and $|J(\gamma)| = |J(\gamma')| \geqslant l(\gamma^{n-1}) = l(\gamma^n)$. Since $(\gamma^{n-1})^{-1} \alpha_n > 0$ one has $\delta_n \gamma^{n-1} > \gamma^{n-1}$. Hence $\gamma$ is positive if and only if $\gamma'$ is positive. So we are done.  \\
\textbf{Case 2:} $(\gamma^n)^{-1} \alpha_n > 0$ and $\gamma_n = \delta_n$. \\
Then $n \notin J(\gamma)$, and $|J(\gamma)| = |J(\gamma')| \geqslant l(\gamma^{n-1}) = l(\gamma^n) +1$. Since $(\gamma^{n-1})^{-1} \alpha_n < 0$ one has $\delta_n \gamma^{n-1} < \gamma^{n-1}$, so $\gamma$ is not positive.  \\
\textbf{Case 3:} $(\gamma^n)^{-1} \alpha_n < 0$, and $\gamma_n = e$. \\
Then $n \in J(\gamma)$, and $|J(\gamma)| = |J(\gamma')| + 1\geqslant l(\gamma^{n-1}) +1 = l(\gamma^n)+1$. Since $(\gamma^{n-1})^{-1} \alpha_n < 0$, $\gamma$ is not positive.   \\
\textbf{Case 4:} $(\gamma^n)^{-1} \alpha_n < 0$ and $\gamma_n = \delta_n$. \\
Then $n \in J(\gamma)$, and $|J(\gamma)| = |J(\gamma')| +1 \geqslant l(\gamma^{n-1}) +1 = l(\gamma^n)$. Since $(\gamma^{n-1})^{-1} \alpha_n > 0$, $\gamma$ is positive if and only if $\gamma'$ is positive. So we are done. 
\end{proof}

\begin{rem} \label{pos vs disting}
If $\gamma \in \Upsilon_{\bfu, \bfv, \sigma}^+$, then $j \in J(\gamma)$ if and only if $\gamma_j = \delta_j$. Indeed, $j \in J(\gamma)$ if and only if  $\delta_j (\gamma^j)^{\epsilon_j} < (\gamma^j)^{\epsilon_j}$. But since $\gamma$ is positive, this happens exactly when $\gamma_j = \delta_j$. 
\end{rem}

\begin{defn} \label{distinguished subexpr}
Let $\gamma \in \Upsilon_{\sigma(\bfu, \bfv)}$. We say that $\gamma$ is \textit{distinguished}, if $C^{\gamma} \cap \calO^{\sigma(\bfu, \bfv)}$ is non-empty. Denote by $\Upsilon_{\bfu, \bfv, \sigma}^d$ the set of all distinguished $\sigma$-shuffled subexpressions. 
\end{defn}

The open subset $\calO^{\sigma(\bfu, \bfv)} \subset Z^{\sigma}_{\bfu, \bfv}$ has an intrinsic description. Indeed, for $j \in [1,n]$, let 
$$
B^{\sigma}_j = \left\{
\begin{array}{cc}
B\delta_jB \times B_-, &  \epsilon_j = -1   \\
B \times B_-\delta_jB_-, & \epsilon_j = 1.
\end{array} \right.
$$
Then $\calO^{\sigma(\bfu, \bfv)} = B^{\sigma}_1 \times_{B \times B_-} \cdots \times_{B \times B_-} B^{\sigma}_n/(B \times B_-)$. Note that we have $\calO^{\sigma(\bfu, \bfv)} = I^{\sigma}_{\bfu, \bfv}(\calO^{\bfu, \bfv})$, recall (\ref{calO bfu bfv}) and (\ref{I^sigma_bfu bfv}).

\begin{defn} \label{I gamma}
For any $\gamma \in \Upsilon_{\sigma(\bfu, \bfv)}$, let 
$$
I(\gamma) = \{j \in [1,n] \mid \gamma_j = \delta_j \}.
$$
\end{defn}

\begin{lem} \label{J subset I}
A $\sigma$-shuffled subexpression $\gamma \in \Upsilon_{\sigma(\bfu, \bfv)}$ is distinguished if and only if $J(\gamma) \subset I(\gamma)$. 
\end{lem}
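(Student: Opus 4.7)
The plan is to combine Proposition~\ref{C^gamma = tildeC^gamma}, which identifies $C^\gamma$ with the subset of $\calO^\gamma$ cut out by $z_j=0$ for $j\in J(\gamma)$, with the intrinsic description of $\calO^{\sigma(\bfu,\bfv)}$ recalled just before the lemma. Since a point $u_\gamma(z)\in\calO^\gamma$ lies in $\calO^{\sigma(\bfu,\bfv)}$ if and only if, for each $j\in[1,n]$, the coordinate $u_{\gamma,j}(z_j)$ actually lies in $B^\sigma_j$, the question reduces to determining, component by component, for which values of $z_j$ one has $p_{\gamma,j}(z_j)\in B\delta_jB$ (when $\epsilon_j=-1$) or $q_{\gamma,j}(z_j)\in B_-\delta_jB_-$ (when $\epsilon_j=1$).

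First I would examine the two local cases in the definition of $u_{\gamma,j}(z_j)$, using the explicit formulas (\ref{p_gamma, j(z)}) and (\ref{q_gamma, j(z)}). When $\gamma_j=\delta_j$ (i.e.\ $j\in I(\gamma)$), the relevant component is $x_{\alpha_j}(z_j)\bar\delta_j$ or $x_{-\alpha_j}(z_j)\bar\delta_j^{-1}$, which sits in $B\bar\delta_j\subset B\delta_jB$ or in $B_-\bar\delta_j^{-1}\subset B_-\delta_jB_-$ for \emph{every} $z_j\in\IC$. When $\gamma_j=e$ (i.e.\ $j\notin I(\gamma)$), the relevant component is $x_{-\alpha_j}(z_j)$ or $x_{\alpha_j}(z_j)$, and a short $SL(2,\IC)$ calculation shows that this lies in the open Bruhat cell $B\delta_jB$ (respectively $B_-\delta_jB_-$) precisely when $z_j\neq 0$.

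Combining these two observations, a point $u_\gamma(z)$ belongs to $C^\gamma\cap\calO^{\sigma(\bfu,\bfv)}$ if and only if $z_j=0$ for all $j\in J(\gamma)$ and $z_j\neq 0$ for all $j\notin I(\gamma)$. Such a $z$ exists if and only if these two requirements are compatible for each $j$, i.e.\ if and only if there is no $j$ with $j\in J(\gamma)$ and $j\notin I(\gamma)$; equivalently, $J(\gamma)\subset I(\gamma)$. Conversely, whenever $J(\gamma)\subset I(\gamma)$, one can take $z_j=0$ for $j\in J(\gamma)$ and any nonzero scalar for $j\notin I(\gamma)$ (and anything for the remaining indices), producing a point of $C^\gamma\cap\calO^{\sigma(\bfu,\bfv)}$.

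The only nontrivial step is the local $SL(2,\IC)$ check that $x_{\pm\alpha_j}(z_j)$ lies in the opposite big Bruhat cell precisely when $z_j\neq 0$; everything else is direct bookkeeping from the formulas for $u_{\gamma,j}$ and from Proposition~\ref{C^gamma = tildeC^gamma}. I do not expect any subtlety in the equivalence of quantifiers, since the components $u_{\gamma,j}(z_j)$ decouple across $j$.
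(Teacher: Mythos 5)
Your proof is correct and follows essentially the same route as the paper: Proposition~\ref{C^gamma = tildeC^gamma} identifies $C^\gamma$ with $\{u_\gamma(z):z_j=0\text{ for }j\in J(\gamma)\}$, and the paper likewise observes (stated without the explicit $SL(2,\IC)$ computation you supply) that $u_{\gamma,j}(z_j)\in B^\sigma_j$ for all $z_j$ when $j\in I(\gamma)$ and precisely when $z_j\neq 0$ when $j\notin I(\gamma)$, then concludes that $C^\gamma\cap\calO^{\sigma(\bfu,\bfv)}\neq\emptyset$ iff $J(\gamma)\subset I(\gamma)$. The only thing worth noting is that you should also record (trivially) that the passive component of $u_{\gamma,j}(z_j)$ always lies in the appropriate Borel, since $B^\sigma_j$ constrains both factors of $P^\sigma_j$.
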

\begin{proof}
Recall (\ref{u_gamma, j(z)}). Notice that if $j \in I(\gamma)$, then $u_{\gamma, j}(z_j) \in B^{\sigma}_j$ for any $z_j \in \IC$. And if $j \notin I(\gamma)$, then $u_{\gamma, j}(z_j) \in B^{\sigma}_j$ if and only if $z_j \neq 0$. \\
\indent
Suppose that $J(\gamma) \subset I(\gamma)$. Let $z = (z_1, ..., z_n) \in \IC^n$, with $z_j = 0$ if $j \in J(\gamma)$, and $z_j \neq 0$ if $j \notin J(\gamma)$. Then by Proposition \ref{C^gamma = tildeC^gamma}, $u_{\gamma}(z) \in C^{\gamma} \cap \calO^{\sigma(\bfu, \bfv)}$. Conversely, if $J(\gamma)$ is not contained in $I(\gamma)$, let $j \in J(\gamma)$ such that $j \notin I(\gamma)$. If $u_{\gamma}(z) \in C^{\gamma}$, by Proposition \ref{C^gamma = tildeC^gamma}, one must have $z_j = 0$. But then $u_{\gamma}(z) \notin \calO^{\sigma(\bfu, \bfv)}$. 
\end{proof}

\begin{rem}
By Remark \ref{pos vs disting}, any positive $\sigma$-shuffled subexpression $\gamma$ is distinguished. Conversely, a distinguished $\sigma$-shuffled subexpression is positive if and only if $J(\gamma) = I(\gamma)$. 
\end{rem}

\begin{exa}
Let $\bfu, \bfv, \sigma, \gamma$ and $\eta$ be as in Example \ref{example3}. Then $\gamma$ is positive and $\eta$ is distinguished, but not positive. 
\end{exa}

Let $\gamma \in \Upsilon_{\bfu, \bfv, \sigma}^d$. Then $u_{\gamma}(z) \in \calO^{\sigma(\bfu, \bfv)}$ if and only if $z_j \neq 0$ when $j \notin I(\gamma)$. Combining this with Proposition \ref{C^gamma = tildeC^gamma}, one obtains the following 

\begin{pro}  \label{O cap C^gamma}
Let $\gamma \in \Upsilon_{\bfu, \bfv, \sigma}^d$ and let $(z_1, ..., z_n) \in \IC^n$ with 
$$
z_j \in \left\{
\begin{array}{ll}
\{0\}, & \text{ if } j \in J(\gamma)   \\
\IC, & \text{ if } I(\gamma) \backslash J(\gamma)    \\
\IC^*, & \text{ if } j \notin I(\gamma).
\end{array} \right.
$$
Then the map 
$$
\IC^{| I(\gamma) \backslash J(\gamma) |} \times (\IC^*)^{|I(\gamma)^c|} \rightarrow C^{\gamma} \cap \calO^{\sigma(\bfu, \bfv)},
$$
where $I(\gamma)^c = [1,n] \backslash I(\gamma)$, sending $z$ to $u_{\gamma}(z)$ is an isomorphism. 
\end{pro}

\subsection{The subvarieties $A_w$}

Let $\bfu = (s_1, ..., s_l)$ and $\bfv = (s_{l+1}, ..., s_n)$ be two sequences of simple reflections. Consider the double flag variety $Z_{\bfu, \bfv}$, and recall the map $\theta_{\bfu, \bfv}$ from (\ref{theta_ bfu bfv}).   

\begin{defn} \label{defn of A_w}
For $w \in W$, let 
$$
A_w = \theta_{\bfu, \bfv}^{-1}(G_{\diag}(w)) \subset Z_{\bfu, \bfv},
$$
and if $\sigma$ is a $(l,n)$-shuffle, let 
$$
A_w^{\sigma} = I^{\sigma}_{\bfu, \bfv}(A_w) \subset Z^{\sigma}_{\bfu, \bfv}, 
$$
recall (\ref{I^sigma_bfu bfv}).
\end{defn}

 Let $u, v$ be as in (\ref{ast prod uv}), and let $w \in W$  with $w \leqslant v^{-1} \ast u$. Then for any $\gamma \in \Upsilon_{\sigma(\bfu, \bfv)}$ with $\gamma^n = w$, by definition of $C^{\gamma}$ one has $C^{\gamma} \subset A^{\sigma}_w$. \\
\indent
Conversely, suppose that $A_w$ is non-empty. Since $\theta_{\bfu, \bfv}(Z_{\bfu, \bfv}) = \overline{\calO^{u,v}}$, Proposition \ref{w <- v^-1 ast u} implies that $w \leqslant v^{-1} \ast u$. Thus one gets 

\begin{pro} \label{A_w non empty iif}
Let $w \in W$. Then $A_w$ is non-empty if and only if $w\leqslant v^{-1} \ast u$. Moreover, for any $(l,n)$-shuffle $\sigma$, one has 
$$
A^{\sigma}_w = \bigsqcup_{\gamma \in \Upsilon_{\sigma(\bfu, \bfv)} : \gamma^n = w} C^{\gamma}. 
$$
\end{pro}

\begin{pro}
For any $w \in W$ with $w \leqslant v^{-1} \ast u$, $A_w$ is a smooth, irreducible subvariety of $Z_{\bfu, \bfv}$ of codimension $l(w)$ in $Z_{\bfu, \bfv}$.
\end{pro}
\begin{proof}
The image of $\theta_{\bfu, \bfv}$ can be written as 
$$
\theta_{\bfu, \bfv}(Z_{\bfu, \bfv}) = \overline{\calO^{u,v}} = \bigsqcup_{u_1 \leqslant u, v_1 \leqslant v} \calO^{u_1, v_1}. 
$$
On the other hand, it is known by \cite[Theorem 1.4]{R} that $\calO^{u_1, v_1}$ and $G_{\diag}(w)$ intersect transversally, for any $u_1, v_1, w \in W$. Hence $\theta_{\bfu, \bfv}$ is transverse to $G_{\diag}(w)$, and so $A_w$ is smooth, and every component of $A_w$ has codimension 
$$
\dim_{\IC} Z_{\bfu, \bfv} - \dim_{\IC}A_w = \dim_{\IC} DF_1 - \dim_{\IC}G_{\diag}(w) = l(w).  
$$
Choose now any $\sigma \in S_{l,n}$, and consider $\gamma_w \in \Upsilon_{\bfu, \bfv, \sigma}^+$. By Lemma \ref{dim of C^gamma}, $C^{\gamma_w}$ has codimension $l(w)$ in $Z^{\sigma}_{\bfu, \bfv}$, and so is open in $A^{\sigma}_w$. Furthermore, by Proposition \ref{A_w non empty iif}, the complement of $C^{\gamma_w}$ in $A^{\sigma}_w$ is a finite union of subvarieties of lower dimension. Hence $C^{\gamma_w}$ is dense in $A^{\sigma}_w$, and so $A^{\sigma}_w$ is irreducible in the Zariski topology. 
\end{proof}

\begin{thm} \label{C^gamma open in A^sigma_w}
Let $w \in W$ with $w \leqslant v^{-1} \ast u$. Then 
$$
C^{\gamma_w} = \calO^{\gamma_w} \cap A^{\sigma}_w = \calO^{\gamma_w} \cap \overline{A^{\sigma}_w}
$$
where $\overline{A^{\sigma}_w}$ is the Zariski closure of $A^{\sigma}_w$. 
\end{thm}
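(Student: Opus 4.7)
The chain of inclusions $C^{\gamma_w} \subset \calO^{\gamma_w} \cap A^\sigma_w \subset \calO^{\gamma_w} \cap \overline{A^\sigma_w}$ is immediate: the first inclusion follows from Proposition \ref{C^gamma = tildeC^gamma} (which shows $C^{\gamma_w} \subset \calO^{\gamma_w}$) together with the definition of $A^\sigma_w$ and the fact that $\gamma_w^n = w$; the second from $A^\sigma_w \subset \overline{A^\sigma_w}$. Thus the proof reduces to establishing the reverse inclusion $\calO^{\gamma_w} \cap \overline{A^\sigma_w} \subset C^{\gamma_w}$.

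My plan is to prove this via two short observations. First, I would note that $\overline{A^\sigma_w} = \overline{C^{\gamma_w}}$. Indeed, the proposition preceding the theorem shows that $A^\sigma_w$ is irreducible of dimension $n - l(w)$, while Lemma \ref{dim of C^gamma} combined with Proposition \ref{C^gamma = tildeC^gamma} gives $C^{\gamma_w} \cong \IC^{n - |J(\gamma_w)|} = \IC^{n - l(w)}$, since $\gamma_w$ is positive. As $C^{\gamma_w} \subset A^\sigma_w$ and both have the same dimension, $C^{\gamma_w}$ is dense in the irreducible variety $A^\sigma_w$, so their Zariski closures in $Z^\sigma_{\bfu, \bfv}$ coincide.

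Second, I would observe that $\overline{C^{\gamma_w}} \cap \calO^{\gamma_w} = C^{\gamma_w}$. By Proposition \ref{C^gamma = tildeC^gamma}, $C^{\gamma_w}$ is cut out inside $\calO^{\gamma_w}$ by the equations $z_j = 0$ for $j \in J(\gamma_w)$, hence it is closed in the open subvariety $\calO^{\gamma_w}$ of $Z^\sigma_{\bfu, \bfv}$. For any locally closed subset $Y \subset U \subset X$ with $Y$ closed in $U$ and $U$ open in $X$, one has $\overline{Y}^X \cap U = Y$: any point $y \in U$ not in $Y$ lies in the open subset $U \smallsetminus Y$ of $X$, which is disjoint from $Y$.

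Combining these two observations yields
\[
\calO^{\gamma_w} \cap \overline{A^\sigma_w} \;=\; \calO^{\gamma_w} \cap \overline{C^{\gamma_w}} \;=\; C^{\gamma_w},
\]
which closes the chain of inclusions and completes the proof. There is no real obstacle here; all substantive work, namely the density of $C^{\gamma_w}$ in $A^\sigma_w$ and the explicit cut-out description of $C^{\gamma_w}$ inside $\calO^{\gamma_w}$, has already been carried out in the preceding results.
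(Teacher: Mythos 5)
Your proof is correct and follows essentially the same route as the paper: it rests on the irreducibility and dimension count for $A^\sigma_w$ from the preceding proposition, the explicit cut-out description of $C^{\gamma_w}$ as a closed subset of the affine chart $\calO^{\gamma_w}$, and elementary topology of irreducible varieties. The paper phrases the final step as ``both $C^{\gamma_w}$ and $\calO^{\gamma_w}\cap\overline{A^\sigma_w}$ are closed irreducible subvarieties of $\calO^{\gamma_w}$ of the same dimension, hence equal,'' whereas you make the density $\overline{C^{\gamma_w}}=\overline{A^\sigma_w}$ explicit and then intersect with the open chart; these are two phrasings of the same argument.
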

\begin{proof}
One has the inclusions $C^{\gamma_w} \subseteq \calO^{\gamma_w} \cap A^{\sigma}_w \subseteq \calO^{\gamma_w} \cap \overline{A^{\sigma}_w}$. On the other hand, both $C^{\gamma_w}$ and $\calO^{\gamma_w} \cap \overline{A^{\sigma}_w}$ are closed, irreducible varieties of $\calO^{\gamma_w}$ of the same dimension. Hence they must be equal. 
\end{proof}

\subsection{Relation to the Webster-Yakimov decompositions} \label{rel with WY}

Let $u,v,w \in W$ with $w  \leqslant v^{-1} \ast u$, and consider $\calO^{u,v}_w \subset DF_1$. In \cite{WY}, Webster and Yakimov introduced decompositions of $\calO^{u,v}_w$ into subvarieties isomorphic to $\IC^{n_1} \times (\IC^*)^{n_2}$ for some integers $n_1, n_2$. We show in this section that the decomposition of $A^{\sigma}_w$ in Proposition \ref{A_w non empty iif} recovers the decompositions in \cite{WY} when $\bfu$ and $\bfv$ are reduced. \\

\indent
We start by recalling the notations used in \cite{WY}. The set of simple roots of $G$ is labelled by $[1,r]$, and $W \times W$ is considered as a Coxeter group with simple reflections $s_{-i}, s_i$, for $i \in [1,r]$. Let $(u,v) \in W \times W$, and let $\bfi = (s_{i_1}, ..., s_{i_n})$, with $i_k \in  [1,r] \cup [-r, -1]$, be a reduced expression for $(u,v)$. We let $\epsilon(k) =1$, if $i_k \in [1,r]$, and $\epsilon(k) = -1$, if $i_k \in [-r, -1]$. A sequence $\bfw = (w_{(0)}, ..., w_{(n)})$ of elements in $W$ is called a \textit{double subexpression of $\bfi$}, if $w_{(0)} = e$, and for $k \in [1,n]$, 
$$
w_{(k)} \in \left\{ \begin{array}{ll}
\{ w_{(k-1)}, w_{(k-1)}s_{|i_k|} \}, & \epsilon(k) = 1,  \\
\{ w_{(k-1)}, s_{|i_k|}w_{(k-1)} \}, & \epsilon(k) = -1.
\end{array} \right.
$$
Webster and Yakimov call $\bfw$ \textit{double distinguished} if $w_{(k)}^{\epsilon(k)} = w_{(k-1)}^{\epsilon(k)}s_{|i_k|}$ for all $k \in [1,n]$ such that $w_{(k-1)}^{\epsilon(k)}s_{|i_k|} < w_{(k-1)}^{\epsilon(k)}$. For each double subexpression $\bfw$, we let 
$$
\begin{array}{lllll}
J_{\bfw}^0 & \text{be the set of indices} & k \in [1,n] & \text{such that} & w_{(k-1)} = w_{(k)},   \\
J_{\bfw}^+ & \text{be the set of indices} & k \in [1,n] & \text{such that} & w_{(k-1)} < w_{(k)},     \\
J_{\bfw}^- & \text{be the set of indices} & k \in [1,n] & \text{such that} & w_{(k-1)} > w_{(k)} .
\end{array}
$$
A double distinguished subexpression is called \textit{positive}, if $J_{\bfw}^-$ is empty. \\
\indent 
We translate now the notations of \cite{WY} into ours. Let 
$$
\bfu = (s_{-i_{k_1}}, ..., s_{-i_{k_l}}), \;\;\;   \bfv = (s_{i_{m_1}}, ..., s_{i_{m_{n-l}}}),
$$
where $\{ k_1 < ...< k_l \}$ is the set of indices in $[1,n]$ with $\epsilon({k_j}) = -1$, $j \in [1,l]$, and $\{ {m_1} < ... < {m_{n-l}} \}$ the set of indices in $[1,n]$ with $\epsilon({m_t}) = 1$, $t \in [1, n-l]$. Then $\bfu$ and $\bfv$ are reduced expressions. Let $\sigma$ be the unique $(l,n)$-shuffle such that $\sigma(\bfu, \bfv) = (s_{|i_1|}, ..., s_{|i_n|})$. It is clear that $\bfw$ is a double subexpression of $\bfi$ if and only if there exists $\gamma \in \Upsilon_{\sigma(\bfu, \bfv)}$ such that $w_{(k)}^{-1} = \gamma^k$, for any $k \in [1,n]$. Moreover, $\bfw$ is double distinguished if and only if $\gamma$ is distinguished, and in this case 
$$
\begin{array}{lcr}
J_{\bfw}^0 = [1,n] \backslash I(\gamma), & J_{\bfw}^+ = J(\gamma), & J_{\bfw}^- = I(\gamma) \backslash J(\gamma). 
\end{array}
$$
Then $\bfw$ is positive if and only if $\gamma$ is positive. \\
\indent
Since $\bfu$ and $\bfv$ are reduced, the restriction $\theta_n \mid_{\calO^{\sigma(\bfu, \bfv)}}: \calO^{\sigma(\bfu, \bfv)} \rightarrow \calO^{u,v}$ is an isomorphism, recall (\ref{teta_n}). One has 
$$
\calO^{\sigma(\bfu, \bfv)} = \bigsqcup_{\gamma \in \Upsilon_{\bfu, \bfv, \sigma}^d} \calO^{\sigma(\bfu, \bfv)} \cap C^{\gamma}.
$$
The subvariety 
$\theta_n(\calO^{\sigma(\bfu, \bfv)} \cap C^{\gamma}) \subset \calO^{u,v}$ is called $\mathcal{P}^{\bfw}_{\bfu, \bfv}$ in \cite[Section 5]{WY}, Thus one has 
$$
\calO^{u,v} = \bigsqcup_{\bfw} \mathcal{P}^{\bfw}_{\bfu, \bfv},
$$
where $\bfw$ runs over all double distinguished subexpressions. The ``piece" $ \mathcal{P}^{\bfw}_{\bfu, \bfv}$ is given in \cite[Theorem 5.2]{WY} the following parametrization.  Let $\gamma \in \Upsilon_{\bfu, \bfv, \sigma}^d$ be such that $(w_{(0)}^{-1}, ..., w_{(n)}^{-1}) = (\gamma^0, .... \gamma^n)$. For $k \in [1,n]$, let $\alpha_k$ be the simple root such that $s_{|i_k|} = s_{\alpha_k}$. For $z \in \IC$, let 
$$
g_k(z) = \left\{ \begin{array}{ll}
\overline{\gamma_{\bfv}^k} x_{\alpha_k}(z)\overline{\gamma_{\bfv}^k}^{-1}, & \epsilon_k = 1, k \notin J(\gamma)    \\
\overline{\gamma_{\bfu}^k} x_{-\alpha_k}(z)\overline{\gamma_{\bfu}^k}^{-1},  & \epsilon_k = -1, k \notin J(\gamma)    \\
e, & k \in J(\gamma),
\end{array} \right.
$$
recall (\ref{ga bfu and ga bfv}), and $g_{\bfw}(z_1, ..., z_n) = g_1(z_1) \cdots g_n(z_n)$, where 
\begin{equation} \label{z_k}
z_k \in \left\{ \begin{array}{ll}
\{0\}, & k \in J(\gamma),   \\
\IC^*, & k \notin I(\gamma),   \\
\IC, & k \in I(\gamma) \backslash J(\gamma). 
\end{array} \right.
\end{equation}
Then $(z_1, ..., z_n) \mapsto g_{\bfw}(z_1, ..., z_n) \cdot (\gamma_{\bfu}.B, \gamma_{\bfv}.B_-)$ is an isomorphism between $(\IC^*)^{|J_{\bfw}^0|} \times \IC^{| J_{\bfw}^-|}$ and $\mathcal{P}^{\bfw}_{\bfu, \bfv}$.

\begin{lem} \label{recovering WY}
Let $z = (z_1, ..., z_n)$ satisfying (\ref{z_k}). There exist signs $\varepsilon_1, ..., \varepsilon_n \in \{ \pm 1\}$ such that 
$$
\theta_n(u_{\gamma}(z)) = g_{\bfw}(\varepsilon_1z_1, ..., \varepsilon_nz_n) \cdot (\gamma_{\bfu}.B, \gamma_{\bfv}.B_-).
$$
\end{lem}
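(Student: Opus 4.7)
The plan is to unpack the claim and prove an inductive strengthening. By (\ref{teta_n}) and (\ref{u_gamma z}), $\theta_n(u_\gamma(z)) = (P_n.B,\, Q_n.B_-)$ with $P_j = p_{\gamma,1}(z_1)\cdots p_{\gamma,j}(z_j)$ and $Q_j = q_{\gamma,1}(z_1)\cdots q_{\gamma,j}(z_j)$; the diagonal $G$-action on $DF_1$ identifies the right-hand side of the claim with $(g_\bfw(\varepsilon z)\overline{\gamma_\bfu}.B,\, g_\bfw(\varepsilon z)\overline{\gamma_\bfv}.B_-)$. So we must produce signs $\varepsilon_j \in \{\pm 1\}$, independent of $z$, such that
$$P_n B = g_\bfw(\varepsilon z)\overline{\gamma_\bfu} B \quad \text{and} \quad Q_n B_- = g_\bfw(\varepsilon z)\overline{\gamma_\bfv} B_-.$$

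\textbf{Induction.} I would prove by induction on $j \in [0,n]$ the stronger pair of identities $P_j B = G_j \overline{\gamma_\bfu^j} B$ and $Q_j B_- = G_j \overline{\gamma_\bfv^j} B_-$, where $G_j := g_1(\varepsilon_1 z_1)\cdots g_j(\varepsilon_j z_j)$ and $\gamma_\bfu^j,\gamma_\bfv^j$ are as in (\ref{ga bfu and ga bfv}); the base case $j=0$ is immediate. The two cases $\epsilon_j = \pm 1$ of the inductive step are symmetric under $(\bfu, B, N) \leftrightarrow (\bfv, B_-, N_-)$, so I would handle $\epsilon_j = -1$. Then $\gamma_\bfv^j = \gamma_\bfv^{j-1}$ and, because $q_{\gamma,j}(z_j) \in N_- \subset B_-$ by (\ref{q_gamma, j(z)}), one has $Q_j B_- = Q_{j-1} B_- = G_{j-1}\overline{\gamma_\bfv^{j-1}} B_-$; matching this with $G_j \overline{\gamma_\bfv^j} B_-$ reduces to the condition $\overline{\gamma_\bfv^{j-1}}^{-1} g_j(\varepsilon_j z_j)\overline{\gamma_\bfv^{j-1}} \in B_-$. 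This is trivial when $j \in J(\gamma)$ (where $g_j = e$) and otherwise follows from the root-subgroup conjugation formula $\overline{w}\, x_\alpha(z)\,\overline{w}^{-1} = x_{w\alpha}(\pm z)$, the definition of $g_j$, the identity $(\gamma_\bfv^{j-1})^{-1}\gamma_\bfu^j = \gamma^j$, and the fact that $j \notin J(\gamma)$ forces $\gamma^j \alpha_j > 0$. For the $P$-side, the same conjugation formula together with $p_{\gamma,j}(z_j) = x_{-\gamma_j\alpha_j}(z_j)\overline{\gamma_j}$ from (\ref{p_gamma, j(z)}) pins down the sign $\varepsilon_j$ so that the new root-subgroup contribution from $g_j(\varepsilon_j z_j)$ cancels that from $p_{\gamma,j}(z_j)$ modulo $B$.

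\textbf{Main difficulty.} The principal obstacle is sign and torus bookkeeping when $\gamma$ is distinguished but not positive. In that case $\overline{\gamma_\bfu^{j-1}}^{-1}\overline{\gamma_\bfu^j}$ differs from $\overline{\gamma_j}$ by a torus factor depending on whether the lengths add, and the representatives $\widetilde{\gamma^j}$ appearing in (\ref{q_gamma, j(z)})--(\ref{p_gamma, j(z)}) differ from $\overline{\gamma^j}$, since Remark \ref{gamma tilde = gamma bar} only applies in the positive case. These torus discrepancies have to be absorbed into the $B$- and $B_-$-ambiguities, using that $H \subset B \cap B_-$ and, crucially, that whenever $j \in J(\gamma) \subset I(\gamma)$ one has $z_j = 0$ so the offending root-subgroup term vanishes. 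The resulting signs $\varepsilon_k$ are determined by $\gamma$ and by the normalization of the $\overline{s}_\alpha$, independently of the parameters $z_k$.
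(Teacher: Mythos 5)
Your overall strategy (unpack the claim, induct on $j$, pick a sign $\varepsilon_j$ at each step) matches the paper's, but the inductive hypothesis you chose is too weak for the $P$-side of the inductive step to close. The hypothesis $P_{j-1}B = G_{j-1}\overline{\gamma_{\bfu}^{j-1}}B$ only gives $P_{j-1} = G_{j-1}\overline{\gamma_{\bfu}^{j-1}}b$ for some $b \in B$, and an arbitrary $b \in B$ cannot be slid past $p_{\gamma,j}(z_j)$: the point $bp_{\gamma,j}(z_j)B$ of $P_{\delta_j}/B \cong \mathbb{P}^1$ depends on $b$ through its $x_{\alpha_j}$-component, and need not coincide with $p_{\gamma,j}(z_j)B$. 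Your $Q$-side step is immune to this precisely because $q_{\gamma,j}(z_j) \in N_- \subset B_-$, so you multiply on the right by something that dies modulo $B_-$; on the $P$-side there is no such luck, since $p_{\gamma,j}(z_j) \notin B$. Combining the two coset identities at stage $j-1$ with Lemma \ref{key lemma} does constrain $b$, but only to lie in $B \cap (\gamma^{j-1})^{-1}B_-\gamma^{j-1}$, which in general has a nontrivial unipotent part, so the constraint does not rescue the step.

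The paper avoids this by proving a genuinely stronger statement by induction: the group-element identity $g_1(z_1)\cdots g_j(z_j) = p_{\gamma,1}(\varepsilon_1 z_1)\cdots p_{\gamma,j}(\varepsilon_j z_j)\,\overline{\gamma_{\bfu}^{j}}^{-1}h_j$ with the discrepancy $h_j$ confined to the 2-torsion subgroup $H^{(2)} = \{h \in H \mid h^2 = e\}$, rather than merely to $B$. Since any $h \in H^{(2)}$ normalizes each root subgroup and scales the parameter by a character value, which for $h$ of order two is $\pm 1$, one can push $h_{j-1}$ past $p_{\gamma,j}(z_j)$ at the cost of a sign, and the discrepancy stays in $H^{(2)}$. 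This is exactly the bookkeeping your final paragraph gestures at, but absorbing the discrepancy "into the $B$-ambiguity" is what breaks the induction; it must be kept in $H^{(2)}$. With the stronger inductive hypothesis the $Q$-side is no longer needed as a separate clause either: it falls out of the element identity together with Lemma \ref{key lemma}, as in the opening display of the paper's proof. I would therefore recommend replacing your coset-level hypothesis with the $H^{(2)}$-level element identity and running the same computation.
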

\begin{proof}
By Lemma \ref{key lemma}, one has 
\begin{align*}
\theta_n(u_{\gamma}(z)) & = (p_{\gamma, 1}(z_1) \cdots p_{\gamma, n}(z_n).B, q_{\gamma, 1}(z_1) \cdots q_{\gamma, n}(z_n).B_-)   \\
  & = p_{\gamma, 1}(z_1) \cdots p_{\gamma, n}(z_n) \bar{\gamma}_{\bfu}^{-1}(\gamma_{\bfu}.B, \gamma_{\bfv}.B_-)   \\
  & = q_{\gamma, 1}(z_1) \cdots q_{\gamma, n}(z_n) \bar{\gamma}_{\bfv}^{-1}(\gamma_{\bfu}.B, \gamma_{\bfv}.B_-).
\end{align*}
Let $H^{(2)} = \{ h \in H \mid h^2 = 1 \}$. It is enough to prove the existence of a $h_j \in H^{(2)}$ and $\varepsilon_1, ..., \varepsilon_j \in \{ \pm 1\}$ such that 
\begin{equation} \label{link with WY}
g_1(z_1) \cdots g_j(z_j) = p_{\gamma, 1}(\varepsilon_1 z_1) \cdots p_{\gamma, j}(\varepsilon_jz_j) \overline{\gamma_{\bfu}^j}^{-1}h_j, 
\end{equation}
for all $ j \in [1,n]$. 
If $j=1$, (\ref{link with WY}) is easily verified. So assume $j \geqslant 2$, and that (\ref{link with WY}) holds for $j-1$. Suppose first that $\epsilon_j = -1$. If $j \in J(\gamma)$, then $g_j(z_j) = e$ and $p_{\gamma, j}(z_j) = s_{|i_j|}$. So 
\begin{align*}
g_1(z_1) \cdots g_j(z_j)  & = g_1(z_1) \cdots g_{j-1}(z_{j-1})  =  p_{\gamma, 1}(\varepsilon_1 z_1) \cdots p_{\gamma, j-1}(\varepsilon_{j-1}z_{j-1}) \overline{\gamma_{\bfu}^{j-1}}^{-1}h_{j-1}   \\
  & =  p_{\gamma, 1}(\varepsilon_1 z_1) \cdots p_{\gamma, j-1}(\varepsilon_{j-1}z_{j-1})\bar{s}_{|i_j|}\overline{\gamma_{\bfu}^j}^{-1}h_j,
\end{align*}
for some $h_j \in H^{(2)}$, so we are done. If $j \notin J(\gamma)$, then 
\begin{align*}
g_1(z_1) \cdots g_j(z_j) & = p_{\gamma, 1}(\varepsilon_1 z_1) \cdots p_{\gamma, j-1}(\varepsilon_{j-1}z_{j-1}) \overline{\gamma_{\bfu}^{j-1}}^{-1}h_{j-1} \overline{\gamma_{\bfu}^j}x_{-\alpha_j}(z_j) \overline{\gamma_{\bfu}^j}^{-1}  \\
  & = p_{\gamma, 1}(\varepsilon_1 z_1) \cdots p_{\gamma, j-1}(\varepsilon_{j-1}z_{j-1}) \bar{\gamma}_jx_{-\alpha_j}(\varepsilon'z_j) \overline{\gamma_{\bfu}^j}^{-1} h'   \\
  & = p_{\gamma, 1}(\varepsilon_1 z_1) \cdots p_{\gamma, j-1}(\varepsilon_{j-1}z_{j-1}) x_{-\gamma_j\alpha_j}(\varepsilon_jz_j)\bar{\gamma}_j\overline{\gamma_{\bfu}^j}^{-1} h_j  \\
  & = p_{\gamma, 1}(\varepsilon_1 z_1) \cdots p_{\gamma, j}(\varepsilon_{j}z_{j}) \overline{\gamma_{\bfu}^j}^{-1} h_j ,
\end{align*}
for some $h', h_j \in H^{(2)}$, and $\varepsilon', \varepsilon_j \in \{ \pm 1 \}$. Thus we are done. Suppose now that $\epsilon_j = 1$. Observe that for any $k \in [1,n]$, 
$$
p_{\gamma, 1}(z_1) \cdots p_{\gamma, k}(z_k) \overline{\gamma_{\bfu}^{k}}^{-1} = q_{\gamma, 1}(z_1) \cdots q_{\gamma, k}(z_k) \overline{\gamma_{\bfv}^{k}}^{-1} h,
$$
for some $h \in H^{(2)}$. Hence one can proceed similarly by proving
$$
g_1(z_1) \cdots g_j(z_j) = q_{\gamma, 1}(\varepsilon_1 z_1) \cdots q_{\gamma, j}(\varepsilon_jz_j) \overline{\gamma_{\bfu}^j}^{-1}h_j, \;\;\; j \in [1,n],
$$
for some $h_j \in H^{(2)}$ and $\varepsilon_1, .... \varepsilon_j \in \{ \pm 1\}$. 
\end{proof}

We conclude that our parametrization of $C^{\gamma}$ recovers the one of $\mathcal{P}^{\bfw}_{\bfu, \bfv}$ when both $\bfu$ and $\bfv$ are reduced.

\section{Regular functions on $\calO^{\sigma(\bfu, \bfv)}$ defined by minors}

Let $(\bfu, \bfv, \sigma)$ be as in Definition \ref{sigma bfu bfv}. We introduce, for each distinguished $\sigma$-shuffled subexpression $\gamma$, a family of regular functions on $\calO^{\sigma(\bfu, \bfv)}$, and describe $\calO^{\sigma(\bfu, \bfv)} \cap C^{\gamma}$ using these functions. When $\gamma$ is positive, we relate this family of functions to the coordinates on $\calO^{\gamma}$.

\subsection{Generalised minors}

We recall the notion of generalised minors, and some of their basic properties that will be needed. See \cite{double} for further details. We assume from now on that $G$ is simply connected.    \\
\indent
The set $N_-HN$ is Zariski open in $G$. Moreover, recall that any element $x \in N_-HN$ is uniquely written as $x = [x]_-[x]_0[x]_+$, with $[x]_- \in N_-$, $ [x]_0 \in H$, $[x]_+ \in N$. Let $\alpha \in \Gamma$. For $x \in N_-HN$, define 
$$
\triangle_{\lambda_{\alpha}}(x) = [x]_0^{\lambda_{\alpha}}.
$$
The functions $\triangle_{\lambda_{\alpha}}$ extend to regular functions on $G$. For $G = SL(n,\IC)$, these are simply the principal $i \times i$ minor of a matrix $x \in G$, $i \in [1,n-1]$. \\
\indent
An equivalent definition of $\triangle_{\lambda_{\alpha}}$ is the following. Let $V_{\alpha}$ be the irreducible representation of $G$ of highest weight $\lambda_{\alpha}$, and let $v_{\alpha} \in V_{\alpha}$ be a highest weight vector. For any $a \in V$, let $\xi_{\alpha}(a)$ be the coefficient of $v_{\alpha}$ in the expansion of $a$ in any basis consisting of $v_{\alpha}$ and weight vectors. Then 
$$
\triangle_{\lambda_{\alpha}}(x) = \xi_{\alpha}(x \cdot v_{\alpha}), \;\;\; x \in G. 
$$
\indent
For any $u,v \in W$, the corresponding \textit{generalised minor} is the regular function on $G$ given by 
$$
\triangle_{u\lambda_{\alpha}, v\lambda_{\alpha}}(x) = \triangle_{\lambda_{\alpha}}(\bar{u}^{-1}x\bar{v}).
$$
It can be checked that $\triangle_{u\lambda_{\alpha}, v\lambda_{\alpha}}$ depends only on $u\lambda_{\alpha}$ and $v\lambda_{\alpha}$, thus making the notation consistent. \\
\indent
If $G = SL(n, \IC)$, the Weyl group of $G$ is naturally identified with the symmetric group $S_n$. Then $\triangle_{u\lambda_{\alpha}, v\lambda_{\alpha}}(x)$ is the minor of $x \in G$ formed by lines $u([1,i])$ and columns $v([1,i])$, where $\alpha$ is the $i$'th simple root. 

\begin{lem} \label{zlocus of triangle}
\cite[Proposition 2.4]{double}
Let $\alpha \in \Gamma$. The zero locus of $\triangle_{\lambda_{\alpha}}$ is precisely $\overline{B_-s_{\alpha}B}$.
\end{lem}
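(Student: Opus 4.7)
The plan is to use the representation-theoretic definition $\triangle_{\lambda_\alpha}(x) = \xi_\alpha(x \cdot v_\alpha)$, analyze its vanishing on each cell of the Bruhat decomposition $G = \bigsqcup_{w \in W} B_- \bar w B$, and then identify the union of cells on which $\triangle_{\lambda_\alpha}$ vanishes with $\overline{B_- s_\alpha B}$.

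First I would observe that, since $(\lambda_\alpha, \check\beta) = \delta_{\alpha,\beta}$, the stabilizer in $W$ of the weight $\lambda_\alpha$ is the parabolic subgroup $W_{\hat\alpha} = \langle s_\beta : \beta \in \Gamma\setminus\{\alpha\}\rangle$, and the isotropy subgroup of the line $\mathbb{C} v_\alpha$ in $G$ is the standard parabolic $P_{\hat\alpha} = B W_{\hat\alpha} B$. For $x \in B_- \bar w B$, writing $x = b_- \bar w b$, one has
$$
x\cdot v_\alpha = b^{\lambda_\alpha}\, b_- \bar w v_\alpha,
$$
with $\bar w v_\alpha$ a nonzero vector in the weight space $(V_\alpha)_{w\lambda_\alpha}$. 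Since $\lambda_\alpha$ is the highest weight (so $w\lambda_\alpha \leq \lambda_\alpha$ with equality iff $w \in W_{\hat\alpha}$) and since $b_-$ acts as the identity plus operators strictly lowering the weight, the $v_\alpha$-coefficient of $x\cdot v_\alpha$ is nonzero exactly when $w \in W_{\hat\alpha}$. Consequently the non-vanishing locus of $\triangle_{\lambda_\alpha}$ is the open set
$$
N_-P_{\hat\alpha} \;=\; \bigsqcup_{w \in W_{\hat\alpha}} B_- \bar w B,
$$
so the zero locus is $\bigsqcup_{w\notin W_{\hat\alpha}} B_-\bar wB$.

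Next, I would identify this set with $\overline{B_- s_\alpha B}$. I would first establish the closure relation
$$
\overline{B_- \bar w B} \;=\; \bigsqcup_{u \geq w} B_- \bar u B
$$
by transporting the standard Bruhat closure in $B_-\backslash G/B_-$ via right-multiplication by $\bar{w}_0$, which reverses the Chevalley-Bruhat order and exchanges the two Borel subgroups. Applied to $w = s_\alpha$, this gives $\overline{B_-s_\alpha B} = \bigsqcup_{u \geq s_\alpha} B_- \bar u B$. It then remains to prove that $u \geq s_\alpha$ if and only if $u \notin W_{\hat\alpha}$: by the subword property, $s_\alpha \leq u$ iff some (equivalently any) reduced expression of $u$ involves the simple reflection $s_\alpha$, which is exactly the condition $u \notin W_{\hat\alpha}$ since the set of simple reflections appearing in a reduced expression of $u$ is an invariant of $u$ (the support of $u$ in the Coxeter system).

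The main obstacle I anticipate is getting the closure-reversal for $B_-\times B$-orbits correct, since it is easy to conflate this with the standard $B\times B$ closure relation; once this reversal and the support-of-$u$ characterization of $W_{\hat\alpha}$ are in place, the two set-theoretic descriptions of the zero locus match cell-by-cell and the lemma follows.
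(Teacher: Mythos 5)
The paper does not prove this lemma itself; it cites it as Proposition~2.4 of Fomin--Zelevinsky \cite{double}, so there is no in-text argument to compare yours against. Your proof is correct and fills that gap in a self-contained way: the highest-weight computation correctly identifies the non-vanishing locus of $\triangle_{\lambda_\alpha}$ as $\bigsqcup_{w\in W_{\hat\alpha}}B_-\bar{w}B$ (only cells with $w\lambda_\alpha=\lambda_\alpha$ contribute a nonzero $v_\alpha$-coefficient), the closure relation $\overline{B_-\bar{w}B}=\bigsqcup_{u\geq w}B_-\bar{u}B$ does follow by translating the standard Bruhat closure by $\bar{w}_0$, which exchanges the two Borels and reverses the Chevalley--Bruhat order, and the final equivalence $u\geq s_\alpha \Leftrightarrow u\notin W_{\hat\alpha}$ is exactly the support characterization of standard parabolics via the subword property. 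One minor imprecision worth tidying in a final write-up: you describe $b_-$ as acting by ``identity plus operators strictly lowering the weight,'' but $b_-\in B_-$ also has a torus component $h_-$; what you actually use is that $b_-$ sends a weight-$\mu$ vector to $h_-^{\mu}$ times it plus strictly lower-weight terms, and since torus characters are nowhere zero the conclusion is unaffected.
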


\begin{lem} \label{formula minor}
Let $\alpha \in \Gamma$. Then for any $z \in \IC$ and any $g \in G$, one has 
\begin{align*}
\triangle_{\lambda_{\alpha}}(x_{\alpha}(z)g) & = \triangle_{\lambda_{\alpha}}(g) + z \triangle_{s_{\alpha}\lambda_{\alpha}, \lambda_{\alpha}}(g)    \\
\triangle_{\lambda_{\alpha}}(gx_{-\alpha}(z)) & = \triangle_{\lambda_{\alpha}}(g) + z \triangle_{\lambda_{\alpha},s_{\alpha} \lambda_{\alpha}}(g).
\end{align*}
\end{lem}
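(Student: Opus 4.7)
The plan is to prove both identities via the representation-theoretic description $\triangle_{\lambda_\alpha}(x) = \xi_\alpha(xv_\alpha)$, combined with the $\phi_\alpha$-$\mathfrak{sl}_2$ action on $V_\alpha$ and the fact that $(\lambda_\alpha, \check{\alpha}) = 1$. The crucial structural observation is that $v_\alpha$ generates a $2$-dimensional $\mathfrak{sl}_2$-subrepresentation $M = \mathbb{C} v_\alpha \oplus \mathbb{C} e_{-\alpha}v_\alpha$ of $V_\alpha$, with weights $\{\lambda_\alpha, \lambda_\alpha-\alpha = s_\alpha\lambda_\alpha\}$; in particular, $e_\alpha v_\alpha = 0$ and $e_{-\alpha}^2 v_\alpha = 0$. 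I will also use the explicit $SL(2)$-computations $\bar s_\alpha v_\alpha = e_{-\alpha} v_\alpha$ and $\bar s_\alpha^{-1} e_{-\alpha}v_\alpha = v_\alpha$, which follow directly from the matrix formulas defining $\phi_\alpha$.

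The second identity is the easier of the two. Since $x_{-\alpha}(z) = \exp(z e_{-\alpha})$ and $e_{-\alpha}^k v_\alpha = 0$ for $k \geqslant 2$, the series applied to $v_\alpha$ truncates to $v_\alpha + z\, e_{-\alpha} v_\alpha$. Hence
\[
\triangle_{\lambda_\alpha}(g x_{-\alpha}(z)) = \xi_\alpha(g v_\alpha) + z \xi_\alpha(g e_{-\alpha} v_\alpha).
\]
The identity $e_{-\alpha} v_\alpha = \bar s_\alpha v_\alpha$ lets one rewrite the second term as $\xi_\alpha(g\bar s_\alpha v_\alpha) = \triangle_{\lambda_\alpha}(g\bar s_\alpha) = \triangle_{\lambda_\alpha, s_\alpha\lambda_\alpha}(g)$, as desired.

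For the first identity, expand $x_\alpha(z) = \exp(z e_\alpha)$ to get
\[
\triangle_{\lambda_\alpha}(x_\alpha(z) g) = \sum_{k\geqslant 0} \frac{z^k}{k!} \xi_\alpha(e_\alpha^k g v_\alpha),
\]
and show only $k = 0$ and $k = 1$ survive. For the $k = 0$ term we get $\triangle_{\lambda_\alpha}(g)$. For $k = 1$, writing the weight decomposition $gv_\alpha = \triangle_{\lambda_\alpha}(g) v_\alpha + c_1 e_{-\alpha}v_\alpha + (\textrm{lower})$, the relation $e_\alpha e_{-\alpha} v_\alpha = h_\alpha v_\alpha = v_\alpha$ gives $\xi_\alpha(e_\alpha g v_\alpha) = c_1$; and the calculation of $\xi_\alpha(\bar s_\alpha^{-1} g v_\alpha)$ via $\bar s_\alpha^{-1} e_{-\alpha} v_\alpha = v_\alpha$ identifies $c_1 = \triangle_{s_\alpha\lambda_\alpha, \lambda_\alpha}(g)$.

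The main obstacle is the vanishing of the $k \geqslant 2$ terms, which is the one place where a genuine representation-theoretic argument is needed rather than a direct sl$(2)$-computation. I would handle it by decomposing $V_\alpha = M \oplus M'$ as $\phi_\alpha$-$\mathfrak{sl}_2$-modules, and observing that since the weight-$\lambda_\alpha$ space of $V_\alpha$ is one-dimensional (as $\lambda_\alpha$ is the highest weight), the complement $M'$ has no weight-$\lambda_\alpha$ vectors. For any $k \geqslant 2$ and any weight-$(\lambda_\alpha - k\alpha)$ vector $w \in V_\alpha$, the $M$-component of $w$ must vanish (since $M$ has only the weights $\lambda_\alpha$ and $\lambda_\alpha - \alpha$), while $e_\alpha^k$ acting on the $M'$-component lands in the weight-$\lambda_\alpha$ subspace of $M'$, which is zero. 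Therefore $e_\alpha^k w = 0$, and in particular $\xi_\alpha(e_\alpha^k gv_\alpha) = 0$ for $k \geqslant 2$, completing the proof.
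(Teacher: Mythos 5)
Your proof is correct and takes essentially the same representation-theoretic approach as the paper: both compute $\triangle_{\lambda_\alpha}(x_\alpha(z)g) = \xi_\alpha(x_\alpha(z)gv_\alpha)$ by decomposing $gv_\alpha$ into weight components and tracking which ones can contribute to the $v_\alpha$-coefficient after hitting with $x_\alpha(z)$. The paper organizes this via the decomposition of $gv_\alpha$ into extremal-weight vectors $\bar w v_\alpha$ plus a non-extremal remainder $R$, while you expand $x_\alpha(z) = \exp(ze_\alpha)$ and argue that only the $k=0,1$ terms survive; these are two bookkeeping schemes for the same computation, and both ultimately rest on the $\mathfrak{sl}_2$-structure of the $\alpha$-string through $\lambda_\alpha$ and the one-dimensionality of the highest weight space. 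Your version is somewhat more explicit about the vanishing of the higher-order terms, which the paper leaves implicit. One small point you could streamline: to argue $\xi_\alpha(e_\alpha^k g v_\alpha)=0$ for $k\geqslant 2$, it is cleaner to observe directly that the weight space $V_\alpha[\lambda_\alpha - k\alpha]$ is already zero for $k\geqslant 2$ (the $\alpha$-string through $\lambda_\alpha$ has length $(\lambda_\alpha,\check\alpha)=1$ since $\lambda_\alpha+\alpha$ is not a weight); this avoids the slight delicacy of choosing the $\phi_\alpha(\mathfrak{sl}_2)$-complement $M'$ to be compatible with the $H$-weight decomposition, which your argument tacitly requires when you speak of "the $M$-component of $w$" and "the weight-$\lambda_\alpha$ subspace of $M'$".
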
 
\begin{proof}
The two formulas can be proved similarly, so we will only prove the first one. 
Let $V$ be the irreducible representation of $G$ with highest weight $\lambda_{\alpha}$, and let $v \in V$ be a highest weight vector. For any $g \in G$, one has 
$$
g\cdot v = \sum_{w\lambda_{\alpha} \mid w \in W} \triangle_{w\lambda_{\alpha}, \lambda_{\alpha}}(g) \bar{w}\cdot v + R,
$$
where $R$ lies in the direct sum of non-extremal weight spaces. Thus for $z \in \IC$, 
$$
x_{\alpha}(z)g\cdot v =  \sum_{w\lambda_{\alpha} \mid w \in W} \triangle_{w\lambda_{\alpha}, \lambda_{\alpha}}(g) x_{\alpha}(z)\bar{w}\cdot v + x_{\alpha}(z)R.
$$
For any $a \in V$, let $\xi(a)$ be the coefficient of $v$ in the expansion of $a$ in any basis consisting of $v$ and weight vectors. Then 
\begin{align*}
\triangle_{\lambda_{\alpha}}(x_{\alpha}(z)g) & = \xi(x_{\alpha}(z)g\cdot v) = 
\sum_{w\lambda_{\alpha} \mid w \in W} \triangle_{w\lambda_{\alpha}, \lambda_{\alpha}}(g) \xi(x_{\alpha}(z)\bar{w}\cdot v) + \xi(x_{\alpha}(z)R)   \\
  & = \triangle_{\lambda_{\alpha}}(g) + z \triangle_{s_{\alpha}\lambda_{\alpha}, \lambda_{\alpha}}(g).
\end{align*}
\end{proof}

\subsection{Families of regular functions on $\calO^{\sigma(\bfu, \bfv)}$ defined by minors}

Let $(\bfu, \bfv, \sigma)$ be as in Definition \ref{sigma bfu bfv} and $\gamma$ be a $\sigma$-shuffled subexpression of $(\bfu, \bfv)$. Recall that $\gamma$ is \textit{distinguished} if  $C^{\gamma} \cap \calO^{\sigma(\bfu, \bfv)} \neq \emptyset$. This is equivalent, by Lemma \ref{J subset I}, to  $J(\gamma) \subset I(\gamma)$, recall Definitions \ref{J gamma} and \ref{I gamma}. If $\gamma$ is distinguished, we denote 
\begin{equation} \label{K gamma}
K(\gamma) = I(\gamma) \backslash J(\gamma). 
\end{equation}
Let $(z_1, ..., z_n)$ be the coordinates on $\calO^{\sigma(\bfu, \bfv)}$ defined in (\ref{u_gamma z}). To lighten the notation we write
$$
\calO = \calO^{\sigma(\bfu, \bfv)},
$$
and for $j \in [1,n]$, 
$$
p_j(z_j) = p_{\sigma(\bfu, \bfv), j}(z_j), \;\;\; q_j(z_j) = q_{\sigma(\bfu, \bfv), j}(z_j), \;\;\; u(z) = u_{\sigma(\bfu, \bfv)}(z).
$$
In particular, one has 
$$
p_j(z_j) \left\{ \begin{array}{ll}
 = x_{\alpha_j}(z_j) \bar{\delta}_j, & \text{ if } \epsilon_j = -1   \\
\in N,   & \text{ if } \epsilon_j = 1,
\end{array} \right.     \;\;
q_j(z_j) \left\{ \begin{array}{ll}
 \in N_-, & \text{ if } \epsilon_j = -1   \\
= x_{-\alpha_j}(z_j) \bar{\delta}^{-1}_j,   & \text{ if } \epsilon_j = 1.
\end{array} \right. 
$$

\begin{defn} \label{g_j(z_1, ..., z_j)}
For $j \in [1,n]$, let
$$
g_j(z_1, ..., z_j) =  \left\{  \begin{array}{ll}
(q_1(z_1) \cdots q_{j-1}(z_{j-1}))^{-1}p_1(z_1) \cdots p_j(z_j)  \in G, & \text{ if } \epsilon_j = -1   \\
(q_1(z_1) \cdots q_j(z_j))^{-1}p_1(z_1) \cdots p_{j-1}(z_{j-1})  \in G, & \text{ if } \epsilon_j = 1.
\end{array} \right. 
$$
For every $\gamma \in \Upsilon_{\bfu, \bfv, \sigma}^d$, and $j \in [1,n]$, define the regular function on $\calO$,
$$
\psi_{\gamma, j}(z_1, ..., z_j) = \left\{ \begin{array}{ll}
\triangle_{\lambda_j} \left( \overline{\gamma^{j-1}}^{-1}g_j(z_1, ..., z_j)\right), & \text{ if } \epsilon_j = -1 \\
\triangle_{\lambda_j} \left( g_j(z_1, ..., z_j)\overline{\gamma^{j-1}}^{-1}  \right), & \text{ if } \epsilon_j = 1,
\end{array} \right.  
$$
where $\lambda_j = \lambda_{\alpha_j}$ for $j \in [1,n]$. 
\end{defn}

\begin{exa} \label{example minors}
Let $G = SL(4, \IC)$ and $\sigma$ be the $(4,8)$-shuffle defined by 
$$
\begin{array}{llllllllll}
\epsilon(\sigma) & =  &  (-1, & -1, & 1, & 1, & -1, & 1, & -1, & 1).
\end{array}
$$
Let 
$$
\begin{array}{llllllllll}
\sigma(\bfu, \bfv) & = & (\gs_2, & \gs_3, & \gs_3, & \gs_1, & \gs_1, & \gs_2, & \gs_3, & \gs_1)
\end{array}
$$
and consider the two $\sigma$-shuffled subexpressions 
$$
\gamma = (\gs_2, e, \gs_3, e, e, e, \gs_3, e) \text{ and } \eta = (e,\gs_3, \gs_3, \gs_1, \gs_1, e, e, e).
$$
Then $\gamma$ is positive and $\eta$ is distinguished, but not positive. Indeed, one has $J(\eta) = \{2, 4 \} \subset I(\eta) = \{2,3,4,5\}$. One has 
$$
\begin{array}{ll}
\psi_{\gamma, 1} =  z_1    , & \psi_{\gamma, 2} =  z_2,     \\
\psi_{\gamma, 3} = z_2z_3-z_1     , & \psi_{\gamma, 4} = z_4,      \\
\psi_{\gamma, 5} =  z_4z_5-z_1    , & \psi_{\gamma, 6} = z_2z_5z_6- z_4z_5-z_2z_3 + z_1,      \\
\psi_{\gamma, 7} = (z_2z_6 - z_4)z_7 - z_6     , & \psi_{\gamma, 8} =   (z_4z_5-z_1)z_8 - (z_4z_7+z_6)z_5 + z_1z_7 + z_3, 
\end{array}
$$
and 
$$
\begin{array}{ll}
\psi_{\eta, 1} =  z_1    , & \psi_{\eta, 2} =  z_2,     \\
\psi_{\eta, 3} = z_3     , & \psi_{\eta, 4} = z_4,      \\
\psi_{\eta, 5} = z_5    , & \psi_{\eta, 6} = z_1z_6 - z_4z_3,      \\
\psi_{\eta, 7} = (z_2z_3-z_1)z_7- z_3     , & \psi_{\eta, 8} =  \psi_{\gamma, 8}.  
\end{array}
$$
\end{exa}

\begin{pro} \label{pro zerenonzero}
Let $u(z) \in \calO$ and $\gamma \in \Upsilon_{\bfu, \bfv, \sigma}^d$. Then $u(z) \in C^{\gamma}$ if and only if 
\begin{equation}  \label{zero nonzero}
\left\{ \begin{array}{ll}
\psi_{\gamma, j}(z_1, ..., z_j) = 0, & \text{ if } j \in J(\gamma)   \\
\psi_{\gamma, j}(z_1, ..., z_j) \neq 0, & \text{ if } j \notin I(\gamma).
\end{array} \right. 
\end{equation}
\end{pro}
\begin{proof}
By definition of $C^{\gamma}$ (recall Definition \ref{C^gamma1}), $u(z) \in C^{\gamma}$ if and only if 
$$
(q_1(z_1) \cdots q_j(z_j))^{-1}p_1(z_1) \cdots p_j(z_j) \in B_- \gamma^j B, \;\; j \in [1,n], 
$$
which is equivalent to 
\begin{equation} \label{condition of g_j}
g_j(z_1, ..., z_j)  \in B_- \gamma^j B,
\end{equation}
for any $j \in [1,n]$. Suppose that $u(z) \in C^{\gamma}$, and let $j \notin I(\gamma)$. Then $\gamma^j = \gamma^{j-1}$, and so 
$$
\left\{ \begin{array}{l}
\overline{\gamma^{j-1}}^{-1}g_j(z_1, ..., z_j)    \\
g_j(z_1, ..., z_j) \overline{\gamma^{j-1}}^{-1}
\end{array} \right.
\in N_-HN. 
$$
Thus one has $\psi_{\gamma, j}(z_1, ..., z_j) \neq 0$. Let now $j \in J(\gamma)$. Since $\gamma$ is distinguished, this implies that $\gamma_j = \delta_j$, thus $(\gamma^{j-1})^{-\epsilon_j} \alpha_j > 0$. Assume that $\epsilon_j = -1$, the case $\epsilon_j = 1$ being similar. One has
$$
g_j(z_1, ..., z_j) \in N_- \gamma^jB = (N_- \cap \gamma^j N_- (\gamma^j)^{-1})\gamma^{j-1} \delta_jB.
$$
It follows that 
\begin{align*}
\overline{\gamma^{j-1}}^{-1}g_j(z_1, ..., z_j)  \in \left( (\gamma^{j-1})^{-1}N_- \gamma^{j-1} \cap \delta_j N_- \delta_j \right) \delta_j B  \subset N_- \delta_j B.
\end{align*}
Thus by Lemma \ref{zlocus of triangle}, one must have $\psi_{\gamma, j}(z_1, ..., z_j) = 0$. \\
\indent
Conversely, suppose that $u(z) \in \calO$ satisfies (\ref{zero nonzero}). We need to show that (\ref{condition of g_j}) holds for every $j \in [1,n]$. Let $j = 1$. Since $\gamma^0 = e$, one has 
$$
\psi_{\gamma, 1}(z_1) = \triangle_{\lambda_1}(x_{\alpha_1}(z_1) \bar{\delta}_1) = z_1.
$$
If $\gamma_1 = e$, one has $z_1 \neq 0$, hence $g_1(z_1) \in B_- B = B_- \gamma^1B$. If $\gamma_1 = \delta_1$, then $1 \in J(\gamma)$, so $z_1 = 0$. Thus $g_1(z_1) \in B_- \delta_1B = B_- \gamma^1B$. So assume that $j \geqslant 2$, and that (\ref{condition of g_j}) holds for $j-1$. Once again both cases $\epsilon_j = -1$ and $\epsilon_j = 1$ are similar, so we will assume that $\epsilon_j = -1$. \\
\noindent
\textbf{Case $j \notin I(\gamma)$:} \\
\noindent
Since $\gamma$ is distinguished, $j \notin J(\gamma)$, so $\gamma^j \alpha_j > 0$, which means $\gamma^j \delta_j > \gamma^j$. Then 
\begin{align*}
g_j(z_1, ..., z_j) & =(q_1(z_1) \cdots q_{j-1}(z_{j-1}))^{-1}p_1(z_1) \cdots p_{j-1}(z_{j-1})  p_j(z_j) \\
  & \in B_- \gamma^{j-1}B\delta_jB = B_-\gamma^{j-1}B \cup B_- \gamma^{j-1} \delta_jB. 
\end{align*}
If $g_j(z_1, ..., z_j) \in B_- \gamma^{j-1} \delta_jB$, then $\overline{\gamma^{j-1}}^{-1}g_j(z_1, ..., z_j) \in B_-\delta_jB$. This implies, by Lemma \ref{zlocus of triangle},  that $\psi_{\gamma, j}(z_1, ..., z_j) = 0$, which is a contradiction. Thus 
$$
g_j(z_1, ..., z_j) \in B_- \gamma^{j-1}B = B_- \gamma^jB.  
$$
\textbf{Case $j \in J(\gamma)$:}   \\
\noindent
Since $\gamma$ is distinguished, $\gamma_j = \delta_j$. Thus $\gamma^{j-1}\delta_j > \gamma^{j-1}$. One gets 
\begin{align*}
g_j(z_1, ...,z_j) & = (q_1(z_1) \cdots q_{j-1}(z_{j-1}))^{-1}p_1(z_1) \cdots p_{j-1}(z_{j-1})  p_j(z_j)  \\
  & \in B_- \gamma^{j-1}B\delta_jB = B_- \gamma^{j-1}B \cup B_- \gamma^{j-1} \delta_jB. 
\end{align*}
If $g_j(z_1, ..., z_j) \in B_- \gamma^{j-1}B$, then $\overline{\gamma^{j-1}}^{-1}g_j(z_1, ..., z_j) \in B_-B$. This implies that $\psi_{\gamma, j}(z_1, ..., z_j) \neq 0$, which is a contradiction. Thus 
$$
g_j(z_1, ..., z_j) \in B_- \gamma^{j-1}\delta_jB = B_- \gamma^jB.  
$$
\textbf{Case $j \in K(\gamma)$:}   \\
\noindent
Since $\gamma$ is distinguished, $\gamma^j \alpha_j > 0$, so $\gamma^{j-1}\delta_j < \gamma^{j-1}$. Hence 
\begin{align*}
g_j(z_1, ..., z_j) & =(q_1(z_1) \cdots q_{j-1}(z_{j-1}))^{-1}p_1(z_1) \cdots p_{j-1}(z_{j-1})  p_j(z_j)  \\
  & \in B_- \gamma^{j-1}B\delta_jB =  B_- \gamma^{j-1}\delta_j B = B_- \gamma^jB. 
\end{align*}
Thus (\ref {condition of g_j}) holds for all $j \in [1,n]$. 
\end{proof}

\begin{lem} \label{L_j M_j}
Let $\gamma \in \Upsilon_{\bfu, \bfv, \sigma}^d$ and $j \in [1,n]$. There exist polynomials $L_j, M_j \in \IC[z_1, ..., z_{j-1}]$ such that 
$$
\psi_{\gamma, j}(z_1, ..., z_j) = L_j + z_jM_j. 
$$
Moreover, $M_j$ vanishes nowhere on $C^{\gamma}$. 
\end{lem}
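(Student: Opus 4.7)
The proof is by direct computation using Lemma \ref{formula minor} to extract the $z_j$-dependence of $\psi_{\gamma,j}$, followed by identifying $M_j$ as a particular generalized minor whose non-vanishing on $C^\gamma$ follows from the Bruhat structure encoded in the definition of $C^\gamma$.

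First I would handle the case $\epsilon_j = -1$; the case $\epsilon_j = 1$ is completely parallel. Writing $X_j = \overline{\gamma^{j-1}}^{-1}(q_1(z_1)\cdots q_{j-1}(z_{j-1}))^{-1}p_1(z_1)\cdots p_{j-1}(z_{j-1})$, one has
\[
\psi_{\gamma,j}(z_1,\ldots,z_j) \;=\; \triangle_{\lambda_j}\!\left(X_j\, x_{\alpha_j}(z_j)\,\bar\delta_j\right).
\]
A short $SL_2$ computation gives the identity $\bar\delta_j^{-1} x_{\alpha_j}(t)\,\bar\delta_j = x_{-\alpha_j}(-t)$, so $X_j\, x_{\alpha_j}(z_j)\,\bar\delta_j = (X_j\bar\delta_j)\, x_{-\alpha_j}(-z_j)$. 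Applying Lemma \ref{formula minor} then yields
\[
\psi_{\gamma,j} \;=\; \triangle_{\lambda_j}(X_j\bar\delta_j) \;-\; z_j\,\triangle_{\lambda_j,\,s_j\lambda_j}(X_j\bar\delta_j),
\]
so $L_j = \triangle_{\lambda_j}(X_j\bar\delta_j)$ and $M_j = -\triangle_{\lambda_j,\,s_j\lambda_j}(X_j\bar\delta_j)$; both depend only on $z_1,\ldots,z_{j-1}$, hence lie in $\IC[z_1,\ldots,z_{j-1}]$. Using $\triangle_{\lambda_j,\,s_j\lambda_j}(g) = \triangle_{\lambda_j}(g\bar{s}_j)$ together with $\bar\delta_j^2 = \calpha_j(-1)$ and $(\lambda_j,\calpha_j) = 1$, this simplifies to
\[
M_j \;=\; \triangle_{\lambda_j}(X_j) \;=\; \triangle_{\gamma^{j-1}\lambda_j,\,\lambda_j}(Q_{j-1}^{-1}P_{j-1}),
\]
where $Q_{j-1} = q_1(z_1)\cdots q_{j-1}(z_{j-1})$ and $P_{j-1} = p_1(z_1)\cdots p_{j-1}(z_{j-1})$. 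A parallel computation in the case $\epsilon_j = 1$ produces $M_j = \triangle_{\lambda_j,\,\gamma^{j-1}\lambda_j}(Q_{j-1}^{-1}P_{j-1})$.

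To show $M_j$ vanishes nowhere on $C^\gamma$, I would first observe that the proof of Proposition \ref{pro zerenonzero} establishes condition (\ref{condition of g_j}) at every index for points of $C^\gamma$, so that $g_{j-1}(z_1,\ldots,z_{j-1}) \in B_-\gamma^{j-1}B$. Since $Q_{j-1}^{-1}P_{j-1}$ differs from $g_{j-1}$ by a left factor in $N_-$ (when $\epsilon_{j-1} = -1$) or a right factor in $N$ (when $\epsilon_{j-1} = 1$), it also lies in $B_-\gamma^{j-1}B$; the case $j = 1$ is trivial since $Q_0^{-1}P_0 = e \in B_-\gamma^0 B$. Now write $Q_{j-1}^{-1}P_{j-1} = n_-\,\overline{\gamma^{j-1}}\,h\,n$ with $n_- \in N_-$, $h \in H$, $n \in N$. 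Using the characterization $\triangle_{\lambda_j}(g) = \xi_{\alpha_j}(g\cdot v_{\alpha_j})$, and the fact that $x_\beta(t)\cdot v_{\alpha_j}$ has coefficient $1$ in front of $v_{\alpha_j}$ for every root $\beta$, a short weight-space computation gives $M_j = h^{\lambda_j}$ when $\epsilon_j = -1$ and $M_j = h^{(\gamma^{j-1})^{-1}\lambda_j}$ when $\epsilon_j = 1$. In both cases this is a nonzero character of $H$ evaluated at $h$, as required.

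The main obstacle I anticipate is the case $\epsilon_j = 1$, where $M_j$ involves $\triangle_{\lambda_j}(g\,\overline{\gamma^{j-1}}^{-1})$ rather than $\triangle_{\lambda_j}(\overline{\gamma^{j-1}}^{-1}g)$. Here the expansion of $g\,\overline{\gamma^{j-1}}^{-1}\cdot v_{\alpha_j}$ runs through the extremal weight vector $v_{(\gamma^{j-1})^{-1}\lambda_j}$, and one must check that higher-weight vectors appearing after applying the $N$-factor cannot contribute to the coefficient of $v_{\alpha_j}$ after subsequent application of $\overline{\gamma^{j-1}}$; this follows because no weight strictly above $(\gamma^{j-1})^{-1}\lambda_j$ is sent to the highest weight $\lambda_j$ by $\gamma^{j-1}$.
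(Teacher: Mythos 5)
Your proof is correct and follows essentially the same route as the paper: isolate the $z_j$-dependence by writing $g_j=x_{j-1}x_{\alpha_j}(z_j)\bar\delta_j$, pass $x_{\alpha_j}(z_j)$ through $\bar\delta_j$, apply Lemma \ref{formula minor}, and identify $M_j=\triangle_{\lambda_j}(\overline{\gamma^{j-1}}^{-1}x_{j-1})$, which is nonzero on $C^\gamma$ because $\overline{\gamma^{j-1}}^{-1}x_{j-1}\in B_-B$. The one small detour is your use of the forward implication of Proposition \ref{pro zerenonzero} to get $g_{j-1}\in B_-\gamma^{j-1}B$ and then pass to $Q_{j-1}^{-1}P_{j-1}$: the condition $Q_{j-1}^{-1}P_{j-1}=x_{j-1}\in B_-\gamma^{j-1}B$ is already the literal content of (\ref{condition for C^gamma}) at index $j-1$, i.e.\ it is part of the definition of $C^\gamma$, so no appeal to Proposition \ref{pro zerenonzero} is needed.
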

\begin{proof}
If $j = 1$, then $\psi_{\gamma, 1}(z_1) = z_1$. So assume now that $j \geqslant 2$, and that $\epsilon_j = -1$, the other case being similar. One has $g_j(z_1, ..., z_j) = x_{j-1} x_{\alpha_j}(z_j)\bar{\delta}_j$, where 
$$
x_{j-1} = (q_1(z_1) \cdots q_{j-1}(z_{j-1}))^{-1}p_1(z_1) \cdots p_{j-1}(z_{j-1})
$$
depends only on $z_1, ..., z_{j-1}$. Then, using Lemma \ref{formula minor}, 
\begin{align*}
\psi_{\gamma, j}(z_1, ..., z_j) & = \triangle_{\lambda_j}(\overline{\gamma^{j-1}}^{-1}x_{j-1} \bar{\delta}_j x_{-\alpha_j}(-z_j))    \\
  & = \triangle_{\lambda_j}(\overline{\gamma^{j-1}}^{-1}x_{j-1} \bar{\delta}_j ) + z_j\triangle_{\lambda_j}(\overline{\gamma^{j-1}}^{-1}x_{j-1}).
\end{align*}
Thus one lets 
$$
L_j = \triangle_{\lambda_j}(\overline{\gamma^{j-1}}^{-1}x_{j-1} \bar{\delta}_j ) \;\; \text{and } \; M_j = \triangle_{\lambda_j}(\overline{\gamma^{j-1}}^{-1}x_{j-1}).
$$
Suppose now that $u(z) \in C^{\gamma}$. Then in particular, $x_{j-1} \in B_- \gamma^{j-1}B$, so that $\overline{\gamma^{j-1}}^{-1}x_{j-1} \in B_-B$. Hence $M_j(z_1, ..., z_{j-1}) \neq 0$.  
\end{proof}

\begin{cor} \label{O cap C^gamma2}
The map 
$$
\calO \cap C^{\gamma} \rightarrow \IC^{|K(\gamma)|} \times (\IC^*)^{|I(\gamma)^c|}, \; u(z) \mapsto ((\psi_{\gamma, j})_{j \in K(\gamma)}, (\psi_{\gamma, j})_{j \notin I(\gamma)}),
$$
where $I(\gamma)^c = [1,n] \backslash I(\gamma)$, is an isomorphism. 
\end{cor}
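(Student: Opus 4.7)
The plan is to combine Proposition \ref{pro zerenonzero}, which cuts out $\calO \cap C^\gamma$ inside $\calO$ by the vanishing of $\psi_{\gamma, j}$ for $j \in J(\gamma)$ and its nonvanishing for $j \in I(\gamma)^c$, with the triangular formula $\psi_{\gamma, j} = L_j(z_1, \ldots, z_{j-1}) + z_j M_j(z_1, \ldots, z_{j-1})$ of Lemma \ref{L_j M_j}, to build an explicit regular inverse of the stated map. By Proposition \ref{pro zerenonzero} the image of the map already lands in $\IC^{|K(\gamma)|} \times (\IC^*)^{|I(\gamma)^c|}$, so the task reduces to inverting it.

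I would construct the inverse as follows. Given a tuple $(y_j)_{j \in K(\gamma) \cup I(\gamma)^c}$, first extend it to $(y_1, \ldots, y_n) \in \IC^n$ by setting $y_j = 0$ for $j \in J(\gamma)$. Then, using $\psi_{\gamma, j} = L_j + z_j M_j$, I would solve $\psi_{\gamma, j}(z_1, \ldots, z_j) = y_j$ recursively via
$$
z_j = \frac{y_j - L_j(z_1, \ldots, z_{j-1})}{M_j(z_1, \ldots, z_{j-1})}, \qquad j \in [1, n].
$$
Once the $z_j$ are defined, the point $u(z) \in \calO$ satisfies $\psi_{\gamma, j}(z) = y_j$ for every $j$ and hence lies in $\calO \cap C^\gamma$ by Proposition \ref{pro zerenonzero}. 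The two maps would then be mutually inverse and both regular, yielding the isomorphism.

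The main obstacle is justifying, at each inductive step, the nonvanishing of $M_j(z_1, \ldots, z_{j-1})$ that makes $z_j$ well defined. My plan for this is to use the truncated data $(\bfu', \bfv', \sigma')$ obtained by keeping only the first $j-1$ reflections of $\sigma(\bfu, \bfv)$, together with the truncation $\gamma' = (\gamma_1, \ldots, \gamma_{j-1})$ of $\gamma$. One readily verifies that $J(\gamma') = J(\gamma) \cap [1, j-1] \subseteq I(\gamma) \cap [1, j-1] = I(\gamma')$, so $\gamma'$ is a distinguished shuffled subexpression, and $\psi_{\gamma', k}$ agrees with $\psi_{\gamma, k}$ for $k \leqslant j - 1$. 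The inductive hypothesis $\psi_{\gamma, k}(z_1, \ldots, z_k) = y_k$ for $k < j$, combined with the sign pattern of the extended tuple, would place the partial point in $\calO^{\sigma'(\bfu', \bfv')} \cap C^{\gamma'}$ via Proposition \ref{pro zerenonzero} applied to the truncation, and then Lemma \ref{L_j M_j} applied in the truncated setting delivers $M_j(z_1, \ldots, z_{j-1}) \neq 0$, closing the induction.
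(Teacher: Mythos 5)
Your proof is correct and follows essentially the same route as the paper, which simply cites Lemma \ref{L_j M_j} and asserts that the triangular system $\psi_{\gamma,j} = L_j + z_j M_j$ can be uniquely solved. The extra care you take with the truncation argument is worthwhile: Lemma \ref{L_j M_j} as stated guarantees $M_j \neq 0$ only on $C^\gamma$, which is what you are trying to land in, so one must observe that the proof of that lemma in fact only uses the condition $g_{j-1}(z_1,\ldots,z_{j-1}) \in B_-\gamma^{j-1}B$, a condition on $(z_1,\ldots,z_{j-1})$ alone that the truncated version of Proposition \ref{pro zerenonzero} delivers from the inductive hypothesis. Your phrasing ``Lemma \ref{L_j M_j} applied in the truncated setting delivers $M_j \neq 0$'' is slightly loose, since the index $j$ lies outside the truncation, but your intent — that the nonvanishing of $M_j$ follows from the truncated membership in $C^{\gamma'}$ — is exactly the right mechanism and matches what the paper's terse proof is implicitly relying on.
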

\begin{proof}
Indeed by Lemma \ref{L_j M_j}, for any value
$$
((x_j)_{j \in K(\gamma)}, (x_j)_{j \notin I(\gamma)}) \in \IC^{|K(\gamma)|} \times (\IC^*)^{|I(\gamma)^c|},
$$ 
one can uniquely solve $\psi_{\gamma, j}(z_1, ..., z_j) = x_j$. 
\end{proof}

\begin{exa}
Let $\gamma$ and $\eta$ be as in Example \ref{example minors}. By Proposition \ref{pro zerenonzero}, $\calO \cap C^{\gamma}$ is given by 
$$
z_1 = z_3 = 0, \;\;\; z_2, z_4, z_5, z_2z_6 - z_4, z_4(z_8 - z_7) + z_6 \neq 0.
$$
One can check directly that $\calO \cap C^{\gamma}$ is isomorphic to $(\IC^*)^5$ via 
$$
(z_1, ..., z_8) \in \calO \cap C^{\gamma} \mapsto (z_2, z_4, z_5, z_2z_6 - z_4, z_4(z_8 - z_7) + z_6 ) \in (\IC^*)^5.  
$$
Similarly, $\calO \cap C^{\eta}$ is given by 
$$
z_2 = z_4 = 0, \;\;\; z_1, z_6, z_1z_7+z_3, z_1(z_7-z_8)- z_6z_5+z_3 \neq 0, 
$$
and $\calO \cap C^{\eta}$ is isomorphic to $(\IC^*)^4 \times \IC^2$ via 
$$
(z_1, ..., z_8) \in \calO \cap C^{\eta} \mapsto \left( (z_1, z_6, z_1z_7+z_3, z_1(z_7-z_8)- z_6z_5+z_3), (z_3, z_5) \right) \in (\IC^*)^4 \times \IC^2.
$$
\end{exa}

\subsection{Factorization problem for a positive subexpression}

Fix now a positive $\sigma$-shuffled subexpression $\gamma$. Let $\xi = (\xi_1, ..., \xi_n)$ be the coordinates on $\calO^{\gamma}$ defined in (\ref{u_gamma z}). By Proposition \ref{O cap C^gamma}, $u_{\gamma}(\xi) \in \calO \cap C^{\gamma}$ if and only if $\xi_j = 0$ when $j \in J(\gamma)$, and $\xi_j \neq 0$ when $j \notin J(\gamma)$. Hence 
$$
\calO \cap C^{\gamma} \rightarrow (\IC^*)^{|J(\gamma)^c|}, \;\;\; u_{\gamma}(\xi) \mapsto (\xi_j)_{j \notin J(\gamma)},
$$
is an isomorphism. On the other hand, by Corollary \ref{O cap C^gamma2}, 
$$
\calO \cap C^{\gamma} \rightarrow (\IC^*)^{|J(\gamma)^c|}, \;\;\; u(z) \mapsto (\psi_{\gamma, j})_{j \notin J(\gamma)}
$$
is another set of coordinates on $\calO \cap C^{\gamma}$. We show now that these two systems of coordinates are related by a triangular matrix of monomials. 

\begin{defn}
For any $1 \leqslant k <  j \leqslant n$, define 
$$
\bfu_{(k,j]} = \prod_{k < i \leqslant j, \epsilon_i = -1} \delta_i, \;\;\;  \bfv_{(k,j]} = \prod_{k < i \leqslant j, \epsilon_i = 1} \delta_i,
$$
where the index in both products is increasing, and it is understood that $\bfu_{(k,j]}$ or $ \bfv_{(k,j]} = e$, if the set over which the product is defined is empty. 
\end{defn}

\begin{thm} \label{m_j,k}
Let $\gamma \in \Upsilon_{\bfu, \bfv, \sigma}^+$, and $j \notin J(\gamma)$. Then over $\calO \cap C^{\gamma}$, one has 
$$
\psi_{\gamma, j} = \prod_{1 \leqslant k \leqslant j, k \notin J(\gamma)} \xi_k^{-m_{j,k}},
$$
where 
$$
m_{j,k} = \left\{ \begin{array}{ll}
( \bfv_{(k,j]} \gamma^j \lambda_j, \calpha_k ), & \text{ if } \epsilon_j = -1, \epsilon_k = 1  \\
( \bfu_{(k,j]} \lambda_j, \calpha_k ), & \text{ if } \epsilon_j = -1, \epsilon_k = -1  \\
( \bfv_{(k,j]} \lambda_j, \calpha_k ), & \text{ if } \epsilon_j = 1, \epsilon_k = 1  \\
( \bfu_{(k,j]} (\gamma^j)^{-1} \lambda_j, \calpha_k ), & \text{ if } \epsilon_j = 1, \epsilon_k = -1.  \\
\end{array} \right. 
$$
\end{thm}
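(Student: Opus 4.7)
The proof proceeds by induction on $j$. The central tool is the $SL(2)$-identity
\[ x_{-\alpha}(\xi) \;=\; x_\alpha(\xi^{-1})\,\bar{s}_\alpha\,\alpha^\vee(\xi)\,x_\alpha(\xi^{-1}), \qquad \xi \in \IC^*, \]
together with its analogue for $x_\alpha(\xi)$, which provides the Bruhat decomposition in $B\bar{s}_\alpha B$ needed to compare the $\xi$-coordinates on $\calO^\gamma$ with the $z$-coordinates on $\calO$. At each position $k \notin J(\gamma)$, where $\gamma_k = e$, this identity produces a coroot factor $\alpha_k^\vee(\xi_k^{\pm 1})$ in the coordinate change, and these factors supply the exponents in the theorem.

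For the induction, Remark \ref{gamma tilde = gamma bar} gives $\wt{\gamma^i} = \overline{\gamma^i}$, and Lemma \ref{key lemma} yields on $C^\gamma$
\[ (q_{\gamma,1}(\xi_1)\cdots q_{\gamma,j-1}(\xi_{j-1}))^{-1}\,p_{\gamma,1}(\xi_1)\cdots p_{\gamma,j-1}(\xi_{j-1}) \;=\; \overline{\gamma^{j-1}}.\]
The standard representative $u(z)$ used in Definition \ref{g_j(z_1, ..., z_j)} and the $\gamma$-representative $u_\gamma(\xi)$ of a point of $\calO \cap C^\gamma$ differ by some element $((b_1,b_{-1}),\ldots,(b_n,b_{-n})) \in (B \times B_-)^n$, so for $\epsilon_j = -1$ one obtains
\[ g_j^{\mathrm{std}}(z_1,\ldots,z_j) \;=\; b_{-(j-1)}\,\overline{\gamma^{j-1}}\,p_{\gamma,j}(\xi_j)\,b_j^{-1}.\]
The inductive task is to construct the torus components $[b_{-(j-1)}]_0, [b_j]_0 \in H$ position by position: the $SL(2)$-identity deposits $\alpha_k^\vee(\xi_k^{\pm 1})$ in the appropriate torus slot, and the commutation $h\,x_{\alpha_i}(z)\,\bar{\delta}_i = x_{\alpha_i}(h^{\alpha_i}z)\,\bar{\delta}_i\,h^{\delta_i}$ (with its $q$-column analog) propagates the resulting torus elements through the sequence, conjugating them by the $\bar{\delta}_i$'s at positions with the relevant sign of $\epsilon_i$ between $k$ and $j$.

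Applying the transformations $\triangle_{\lambda_j}(hx) = h^{\lambda_j}\triangle_{\lambda_j}(x)$ and $\triangle_{\lambda_j}(xh) = h^{\lambda_j}\triangle_{\lambda_j}(x)$ for $h \in H$, together with $\triangle_{\lambda_j}(p_{\gamma,j}(\xi_j)) = 1$ (valid because $\gamma_j = e$ forces $p_{\gamma,j}(\xi_j) = x_{-\alpha_j}(\xi_j) \in N_-$), the minor collapses to $\prod_k \xi_k^{-a_k}$, where $a_k$ is the $\lambda_j$-weight of a Weyl conjugate of $\check\alpha_k$. The identity $(w\lambda_j,\check\alpha_k) = (\lambda_j, w^{-1}\check\alpha_k)$ then rewrites each $a_k$ in the form of the theorem: the products $\bfu_{(k,j]}$ and $\bfv_{(k,j]}$ (with an additional $\gamma^j$- or $(\gamma^j)^{-1}$-adjustment in the two mixed-sign cases) record which of the intervening simple reflections actually appear in the cumulative conjugation, depending on whether the torus factor from position $k$ ends up in $b_{-(j-1)}$ or in $b_j$, and whether the leftmost conjugation by $\overline{\gamma^{j-1}}^{-1}$ adds the extra $\gamma^j$-twist. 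The main obstacle is this combinatorial bookkeeping: establishing for each of the four $(\epsilon_j,\epsilon_k)$ sign patterns that the cascading conjugations produce exactly the claimed product of Weyl elements, and verifying that no spurious contributions appear from intermediate unipotent factors. The case $\epsilon_j = 1$ is handled by the symmetric argument obtained by exchanging the roles of the $p$- and $q$-columns.
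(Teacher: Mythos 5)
Your proposal follows essentially the same route as the paper: both relate $u(z)$ and $u_\gamma(\xi)$ by elements $(b_k,b_{-k})\in B\times B_-$, track only their torus parts $t_k, t_{-k}$ through an inductive recursion driven by the identity $x_{-\alpha}(\xi)=x_\alpha(\xi^{-1})\bar s_\alpha \check\alpha(\xi)x_\alpha(\xi^{-1})$ and the commutation of $H$ past the $\bar\delta_i$'s, and then read off $\psi_{\gamma,j}$ from the $H$-equivariance of $\triangle_{\lambda_j}$. The only substantive difference is that the paper rearranges $g_j$ as $q_j(z_j)\,b_{-j}\,\overline{\gamma^j}\,b_j^{-1}$ (so that everything left of $\overline{\gamma^j}$ lies in $B_-$ and everything right of it in $B$, making the minor evaluate immediately to $t_{-j}^{\gamma^j\lambda_j}t_j^{-\lambda_j}$), whereas your form $b_{-(j-1)}\overline{\gamma^{j-1}}p_{\gamma,j}(\xi_j)b_j^{-1}$ leaves $x_{-\alpha_j}(\xi_j)$ sandwiched in the middle, so by Lemma \ref{formula minor} the "collapse" of the minor produces an extra term $\xi_j\triangle_{\lambda_j,s_j\lambda_j}\bigl(\overline{\gamma^{j-1}}^{-1}b_{-(j-1)}\overline{\gamma^{j-1}}\bigr)$ which you would still need to show vanishes (it does, precisely because $j\notin J(\gamma)$ gives $\gamma^{j-1}\alpha_j>0$).
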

\begin{proof}
Let $z = (z_1, ..., z_n), \xi = (\xi_1, ..., \xi_n) \in \IC^n$ and suppose that $u(z) = u_{\gamma}(\xi)$. Then  there exist elements $b_j \in B$, $b_{-j} \in B_-$, for $j \in [1,n]$, such that 
$$
\left\{ \begin{array}{ll}
p_{\gamma, 1}(\xi_1) &  = p_1(z_1)b_1  \\
b_1p_{\gamma, 2}(\xi_2) & = p_2(z_2) b_2   \\
  & \vdots    \\
b_{n-1}p_{\gamma, n}(\xi_n) & = p_n(z_n) b_n,
\end{array} \right.
\;\;
\left\{ \begin{array}{ll}
q_{\gamma, 1}(\xi_1) &  = q_1(z_1)b_{-1}  \\
b_{-1}q_{\gamma, 2}(\xi_2) & = q_2(z_2) b_{-2}   \\
  & \vdots    \\
b_{-(n-1)}q_{\gamma, n}(\xi_n) & = q_n(z_n) b_{-n}.
\end{array} \right.
$$
Write 
$$
b_j = t_j a_j, \;\;\; b_{-j} = t_{-j} a_{-j}, 
$$
with $t_j, t_{-j} \in H$, $a_j \in N$, and $a_{-j} \in N_-$, $j \in [1,n]$. 
Let now $j \notin J(\gamma)$ and suppose first that $\epsilon_j = -1$. If  $u(z) = u_{\gamma}(\xi) \in C^{\gamma}$, then by Lemma \ref{key lemma}, and Remark \ref{gamma tilde = gamma bar},
\begin{align*}
g_j(z_1, ..., z_j) & = q_j(z_j) (q_1(z_1) \cdots q_j(z_j))^{-1}p_1(z_1) \cdots p_j(z_j)    \\
  & = q_j(z_j) \left( q_{\gamma, 1}(\xi_1) \cdots q_{\gamma, j}(\xi_j) b_{-j}^{-1}\right)^{-1} p_{\gamma, 1}(\xi_1) \cdots p_{\gamma, n}(\xi_n) b_j^{-1}    \\
  & = q_j(z_j) b_{-j} \overline{\gamma^j} b_j^{-1}.
\end{align*}
Similarly, if $\epsilon_j = 1$, then 
\begin{align*}
g_j(z_1, ..., z_j) & =  b_{-j} \overline{\gamma^j} b_j^{-1} p_j(z_j)^{-1}. 
\end{align*}
Recall that since $\gamma$ is positive, one has $\gamma^j = \gamma^{j-1}$. So if $\epsilon_j = -1$, 
\begin{align}
\psi_{\gamma, j} & = \triangle_{\lambda_j}(\overline{\gamma^{j-1}}^{-1} g_j(z_1, ..., z_j)) = \triangle_{\lambda_j}(\overline{\gamma^{j-1}}^{-1} q_j(z_j) b_{-j} \overline{\gamma^j} b_j^{-1}) \notag   \\
  & = \triangle_{\lambda_j}(\overline{\gamma^j}^{-1} q_j(z_j) b_{-j} \overline{\gamma^j} b_j^{-1})  \notag  \\
  & = t_{-j}^{\gamma^j\lambda_j} t_j^{-\lambda_j}. \label{psi epsilon = -1}
\end{align}
Similarly, if $\epsilon_j = 1$, one gets 
\begin{equation} \label{psi epsilon = 1}
\psi_{\gamma, j} = t_{-j}^{\lambda_j} t_j^{-(\gamma^j)^{-1} \lambda_j}.
\end{equation}

Suppose now that $\epsilon_1 = -1$. Since $q_{\gamma, 1}(\xi_1)$ and $q_1(z_1)$ both lie in $N_-$, one has $t_{-1} = e$. If $\gamma_1 = \delta_1$, one has $1 \in J(\gamma)$, and $p_{\gamma, 1}(\xi_1) = \bar{\delta}_1$. Thus $t_1 = e$. If $\gamma_1 =e$, then 
$$
p_{\gamma, 1}(\xi_1) = x_{-\alpha_1}(\xi_1) = x_{\alpha_1}(\xi_1^{-1}) \bar{\delta}_1 \calpha_1(\xi_1)x_{\alpha_1}(\xi_1^{-1}),
$$
and so $t_1 = \calpha_1(\xi_1)$. Similarly, if $\epsilon_1 = 1$, then $t_1 = e$, and 
$$
t_{-1} = \left\{ \begin{array}{ll}
e, & \text{ if } \gamma_1 = \delta_1   \\
\calpha_1(\xi_1^{-1}), & \text{ if } \gamma_1  = e. 
\end{array} \right. 
$$
We claim now that for all $j \in [2,n]$ and $u(z) = u_{\gamma}(\xi) \in \calO \cap C^{\gamma}$,
\begin{equation} \label{t_j}
t_j = \left\{ \begin{array}{ll}
t_{j-1}^{\delta_j}, & \text{ if } \epsilon_j = -1, \; j \in J(\gamma)    \\
\calpha_j(\xi_j)t_{j-1}^{\delta_j}, & \text{ if } \epsilon_j = -1, \; j \notin J(\gamma)    \\
t_{j-1}, & \text{ if } \epsilon_j = 1. 
\end{array} \right. 
\end{equation}
and 
\begin{equation} \label{t_{-j}}
t_{-j} = \left\{ \begin{array}{ll}
t_{-(j-1)}^{\delta_j}, & \text{ if } \epsilon_j = 1, \; j \in J(\gamma)    \\
\calpha_j(\xi_j^{-1})t_{-(j-1)}^{\delta_j}, & \text{ if } \epsilon_j = 1, \; j \notin J(\gamma)    \\
t_{-(j-1)}, & \text{ if } \epsilon_j = -1. 
\end{array} \right. 
\end{equation}
Both (\ref{t_j}) and (\ref{t_{-j}}) arise from similar calculations, so we will concentrate on (\ref{t_j}). Suppose that $\epsilon_j = -1$ and $j \in J(\gamma)$. Then $p_{\gamma, j}(\xi_j) = \bar{\delta}_j$, so 
$$
b_{j-1}p_{\gamma, j}(\xi_j) = t_{j-1}a_{j-1} \bar{\delta}_j = x_{\alpha_j}(z_j) \bar{\delta}_j t_{j-1}^{\delta_j} a_j.
$$
Hence $t_j = t_{j-1}^{\delta_j}$. Suppose now that $j \notin J(\gamma)$. Write $a_{j-1} = x_{\alpha_j}(\xi_j')a_{j-1}'$, with $\xi_j' \in \IC$ and $a_{j-1}' \in N \cap \delta_jN \delta_j$. Then 
\begin{align*}
b_{j-1}p_{\gamma, j}(\xi_j) & = t_{j-1}a_{j-1} x_{-\alpha_j}(\xi_j)     \\
  & = t_{j-1} x_{\alpha_j}(\xi_j') x_{-\alpha_j}(\xi_j) \left( x_{-\alpha_j}(-\xi_j)a_{j-1}' x_{-\alpha_j}(\xi_j) \right)   \\
  & = t_{j-1} x_{\alpha_j}((1+\xi_j\xi_j')/\xi_j) \bar{\delta}_j \calpha_j(\xi_j) x_{\alpha}(\xi_j^{-1}) \left( x_{-\alpha_j}(-\xi_j)a_{j-1}' x_{-\alpha_j}(\xi_j) \right)    \\
  & = x_{\alpha_j}(z_j) \bar{\delta}_j t_{j-1}^{\delta_j} \calpha_j(\xi_j) a_j,
\end{align*}
so $t_j = \calpha_j(\xi_j)t_{j-1}^{\delta_j}$. Suppose now that $\epsilon_j = 1$. Then since $p_{\gamma, j}(\xi_j)$ and $p_j(z_j)$ lie in $N$, one has $t_j = t_{j-1}$. \\
\indent
Now (\ref{t_j}) and (\ref{t_{-j}}) give 
\begin{align*}
t_j  & = \prod_{k=1, \epsilon_k = -1, k \notin J(\gamma)}^j \calpha_k(\xi_k)^{\bfu_{(k,j]}}   \\
t_{-j}  & = \prod_{k=1, \epsilon_k = 1, k \notin J(\gamma)}^j \calpha_k(\xi_k^{-1})^{\bfv_{(k,j]}}. 
\end{align*}
Substituting in (\ref{psi epsilon = -1}) and (\ref{psi epsilon = 1}) yields the result. 
\end{proof}

\begin{defn} \label{M_gamma}
For any $\gamma \in \Upsilon_{\bfu, \bfv, \sigma}^+$, let $M_{\gamma} = (m_{j,k})_{j,k \notin J(\gamma)}$ be the $|J(\gamma)^c| \times |J(\gamma)^c|$ lower triangular matrix where $m_{j,k}$ is as in Theorem \ref{m_j,k}, for $k \leqslant j$, $j,k \notin J(\gamma)$. 
\end{defn}

Notice that $M_{\gamma}$ has diagonal entries $1$. Informally, we write Theorem \ref{m_j,k} as 
$$
\psi_{\gamma} = \xi^{M_{\gamma}}.
$$

\subsection{On the inverse of $M_{\gamma}$}

Let $L_{\gamma}$ be the inverse of $M_{\gamma}$. The relation 
$$
\xi = \psi_{\gamma}^{L_{\gamma}}
$$
can be thought as an analogous of the inverse factorization problem of  Fomin and Zelevinsky, see \cite{double}. That is, the local coordinates $(\xi_k)_{k \notin J(\gamma)}$ on $\calO \cap C^{\gamma}$ are being written in terms of regular functions $(\psi_{\gamma, j})_{j \notin J(\gamma)} \in \IC[\calO]$. \\
\indent
The following Lemma \ref{inv matrix} provides an inductive formula to express the entries of $L_{\gamma}$.

\begin{lem} \label{inv matrix}
Let $V$ be a vector space over a field $\IK$, $v_1, ..., v_n \in V$, and $\varphi_1, ..., \varphi_n \in V^*$. For $1 \leq k < j \leq n$, Suppose given operators $M_{jk} \in \End(V)$ and let $(m_{jk})_{j,k = 1, ..., n} \in \g \gl(n, \IK)$ be the lower triangular matrix with diagonal entries $1$ and 
$$
m_{jk} = (M_{jk} v_j, \varphi_k), \;\;\; 1 \leqslant k < j \leqslant n.
$$
Let $(l_{jk})_{j,k = 1, ..., n}$ be the inverse of $(m_{jk})_{j,k = 1, ..., n}$. Then there exist operators $L_{ji} \in \End(V)$ such that 
$$
l_{jk} = (L_{jk} v_j, \varphi_k), \;\;\;  1 \leqslant k < j \leqslant n. 
$$
\end{lem}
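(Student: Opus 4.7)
The plan is to use the standard formula for inverting a lower-triangular matrix with unit diagonal and induct on $j-k$. Writing $LM = I$ and extracting the $(j,k)$-entry for $j > k$ gives the recursion
$$
l_{jk} \;=\; -m_{jk} \;-\; \sum_{k < i < j} l_{ji}\, m_{ik},
$$
which, together with $l_{jj} = 1$, determines all the $l_{jk}$. The base case $j = k+1$ is immediate: we simply take $L_{k+1,k} = -M_{k+1,k}$, so that $l_{k+1,k} = -m_{k+1,k} = (L_{k+1,k} v_{k+1}, \varphi_k)$.

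For the inductive step, assume the claim holds whenever $j' - k' < j - k$, so operators $L_{ji} \in \End(V)$ with $l_{ji} = (L_{ji} v_j, \varphi_i)$ have already been constructed for all $k < i < j$. Substituting into the recursion gives
$$
l_{jk} \;=\; -(M_{jk} v_j, \varphi_k) \;-\; \sum_{k < i < j} (L_{ji} v_j, \varphi_i)\,(M_{ik} v_i, \varphi_k),
$$
and the task is to rewrite this as $(L_{jk} v_j, \varphi_k)$ for a single operator $L_{jk} \in \End(V)$.

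The key trick is to introduce the rank-one operators $P_i \in \End(V)$ defined by $P_i(w) = \varphi_i(w)\, v_i$; these extract the scalar $\varphi_i(\,\cdot\,)$ and reinsert $v_i$. Then for each intermediate $i$,
$$
\varphi_k\!\bigl(M_{ik}\, P_i\, L_{ji}(v_j)\bigr) \;=\; \varphi_k(M_{ik} v_i)\,\varphi_i(L_{ji} v_j) \;=\; (M_{ik} v_i, \varphi_k)(L_{ji} v_j, \varphi_i).
$$
Consequently, setting
$$
L_{jk} \;=\; -M_{jk} \;-\; \sum_{k < i < j} M_{ik}\, P_i\, L_{ji} \ \in\ \End(V),
$$
we recover $l_{jk} = (L_{jk} v_j, \varphi_k)$, completing the induction.

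The only real subtlety is the passage from the recursion, which mixes \emph{products of scalars}, to a single operator expression applied to $v_j$ and paired against $\varphi_k$. Once one recognises that the rank-one operators $P_i$ act as ``channels'' splicing each scalar factor into a composable endomorphism, the argument is bookkeeping; I expect no obstacle beyond spotting this device.
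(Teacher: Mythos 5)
Your proof is correct and essentially identical to the paper's: both use the same recursion $l_{jk} = -m_{jk} - \sum_{k<i<j} l_{ji}m_{ik}$ and induct on $j-k$, and your operator $L_{jk} = -M_{jk} - \sum_{k<i<j} M_{ik}P_iL_{ji}$ is literally the paper's operator $L_{jk}x = -\sum_{i=k+1}^{j-1}(L_{ji}x,\varphi_i)M_{ik}v_i - M_{jk}x$, just packaged via the rank-one operators $P_i$. The $P_i$ device is a pleasant notational touch but does not change the argument.
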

\begin{proof}
For $j-k = 1$, one lets $L_{jk} = -M_{jk}$. Thus assume $j-k > 1$ and the operators $L_{j'k'}$ defined for $j'-k' < j-k$. Then 
\begin{align*}
l_{jk} & = -\left( \sum_{i = k+1}^{j-1} l_{ji}m_{ik} \right) - m_{jk}   \\
   & =  -\left( \sum_{i = k+1}^{j-1} ((L_{ji}v_j, \varphi_i)M_{ik}v_i, \varphi_k) \right) - m_{jk}   \\
   & = \left(- \left( \sum_{i = k+1}^{j-1} (L_{ji}v_j, \varphi_i)M_{ik}v_i \right) - M_{jk}v_j, \varphi_k   \right).
\end{align*}
Thus set 
$$
L_{jk}x = - \left( \sum_{i = k+1}^{j-1} (L_{ji}x, \varphi_i)M_{ik}v_i \right) - M_{jk}x, \;\;\; x \in V.
$$
\end{proof}

By Lemma \ref{inv matrix}, one can inductively define operators $L_{jk} \in \End(\gh^*)$, for $k< j$, $j,k \notin J(\gamma)$, such that 
\begin{equation} \label{L_ji}
(L_{\gamma})_{j,k} = (L_{jk} \lambda_j, \calpha_k).
\end{equation}
In the case when $\bfv = \emptyset$, that is one only has a single Bott Samelson variety $Z_{\bfu}$, the operators in (\ref{L_ji}) can be expressed in simple way.

\begin{defn}
For any $\alpha \in \Gamma$, define $r_{\alpha} \in \End(\gh^*)$ by 
$$
r_{\alpha}x = x - (x, \calpha_i) \lambda_{\alpha}, \;\;\; x \in \gh^*.
$$
\end{defn}


\begin{pro} \cite{L}
Suppose that $\bfv = \emptyset$. For $i \in [1,n]$, let $r_i = r_{\alpha_i}$ and 
$$
\tilde{\gamma}_i =  \left\{ \begin{array}{ll}
s_i, & \text{ if } i \in J(\gamma)     \\
r_i, & \text{ if } i \notin J(\gamma). 
\end{array} \right. 
$$
For $k < j$ and $k,j \notin J(\gamma)$, one has 
$$
(L_{\gamma})_{k,j} = - (\tilde{\gamma}_{k+1} \cdots \tilde{\gamma}_{j-1}s_j \lambda_j, \calpha_k).
$$
\end{pro}

\section*{Acknowledgments}

The author would like to thank gratefully Jiang-Hua Lu for her help and encouragements. This work was completed while the author was supported by a University of Hong Kong Post-graduate Studentship and  by the HKRGC grant HKU 704310P.

\newpage
\bibliographystyle{plain} 

\addcontentsline{toc}{chapter}{Bibliography}

\end{document}